\documentclass[oneside,  12pt,a4paper]{article}
\usepackage{vmargin}
\usepackage{amsfonts,graphicx,amsmath,amssymb,amsthm,hyperref,color,nicefrac,enumerate,cite}
\usepackage[latin1]{inputenc}
\usepackage{url}
\usepackage{bbm}
\usepackage{stmaryrd}
\usepackage{textcomp}
\usepackage{enumerate}
\newcommand{\R}{\mathbb{R}}

\newcommand{\N}{\mathbb{N}}
\newcommand{\E}{\mathbb{E}}

\renewcommand{\P}{\mathbb{P}}

\newcommand{\norm}[1]{\left\lVert#1\right\rVert}
\newcommand{\Z}{\mathbb{Z}}
\newcommand\numberthis{\addtocounter{equation}{1}\tag{\theequation}}

\numberwithin{equation}{section}
\newtheorem{lemma}{Lemma}[section]

\newtheorem{cor}[lemma]{Corollary}

\newtheorem{theorem}[lemma]{Theorem}

\newtheorem{prop}[lemma]{Proposition}
\newtheorem{setting}[lemma]{Setting}
\begin{document}
	\title{\textbf{ Strong and weak divergence of exponential and linear-implicit Euler approximations for stochastic partial differential equations with superlinearly growing nonlinearities}}

\author{Matteo Beccari$^1$, Martin Hutzenthaler$^2$, Arnulf Jentzen$^3$, \\
	  Ryan Kurniawan$^4$, Felix Lindner$^5$, and Diyora Salimova$^6$
	\bigskip
	\\
	\small{$^1$Seminar for Applied Mathematics, Department of Mathematics,}\\
	\small{ETH Zurich, Switzerland, e-mail:  
	matteobeccari@hotmail.it}
	\smallskip
	\\
	\small{$^2$Faculty of Mathematics, University of Duisburg-Essen,}
\\
	\small{45117 Essen, Germany, e-mail: martin.hutzenthaler@uni-due.de}
		\smallskip
	\\
		\small{$^3$Seminar for Applied Mathematics, Department of Mathematics,}\\
	\small{ETH Zurich, Switzerland, e-mail:   arnulf.jentzen@sam.math.ethz.ch}
	\smallskip
	\\
	\small{$^4$Seminar for Applied Mathematics, Department of Mathematics,}\\
	\small{ETH Zurich, Switzerland, e-mail:  ryan.kurniawan@sam.math.ethz.ch}
		\smallskip
	\\
	\small{$^5$Institute of Mathematics, Faculty of Mathematics and Natural Sciences,}
	\\
	\small{University of Kassel, Germany,
		e-mail: lindner@mathematik.uni-kassel.de}
	\smallskip
\\
\small{$^6$Seminar for Applied Mathematics, Department of Mathematics,}\\
\small{ETH Zurich, Switzerland, e-mail:  diyora.salimova@sam.math.ethz.ch}}

	\maketitle
	\begin{abstract}
	The explicit Euler scheme and similar explicit approximation schemes (such as the Milstein scheme) are known to diverge strongly and numerically weakly in the case of one-dimensional stochastic ordinary differential equations with superlinearly growing nonlinearities. It remained an open question whether such a divergence phenomenon also holds in the case of  stochastic partial differential equations  with superlinearly growing nonlinearities such as stochastic Allen-Cahn equations. In this work we solve this problem by proving that full-discrete exponential Euler and full-discrete linear-implicit Euler approximations diverge strongly and numerically weakly in the case of stochastic Allen-Cahn equations. This article also contains a short literature overview on existing numerical approximation results for stochastic differential equations with superlinearly growing nonlinearities.
	\end{abstract}
	\tableofcontents
	\section{Introduction}
	Stochastic differential equations (SDEs), by which we mean both stochastic ordinary differential equations (SODEs) and stochastic partial differential equations (SPDEs), appear in many real-world models  in engineering and applied sciences. In particular, SDEs are intensively employed  in financial engineering to model prices of financial derivatives (cf., e.g., Filipovi\'c et al.~\cite[(1.3)]{FilipovicTappe2010} and Harms et al.~\cite[Theorem 3.5]{HarmsStefanovits2018}), in molecular dynamics to describe  a system of particles immersed in a fluid bath (cf., e.g., Leimkuhler \& Matthews~\cite[(6.32) and (6.33)]{LeimkuhlerMatthews2015}), in nonlinear filtering problems in engineering  to describe the density of the state variable  (cf., e.g., Zakai~\cite[(18) and (30)]{Zakai1969} and Kushner~\cite[(1)]{Kushner1964}),  as well as in quantum mechanics to model the temporal dynamics associated to Euclidean quantum field theories (cf., e.g., Mourrat \& Weber~\cite[(1.1)]{MourratWeber2017}). The vast majority of SDEs appearing in these models contain superlinearly growing nonlinearities in their coefficient functions. Such SDEs  can usually not be solved explicitly and it is a quite active area of research to design and analyze approximation algorithms which are able to solve  SDEs with superlinearly growing nonlinearities approximatively. In particular, we refer, e.g., to
	 \cite{HutzenthalerJentzenKloeden2012,WangGan2013,Hutzenthaler2015,TretyakovZhang2013,Sabanis2013,sabanis2016, ZongWuHuang2014,
	 	KellyLord2018,Mao2015,Mao2016, QianLiuMaoYue2018,GuoLiuMaoYue2017,SongLuLiu2018,DareiotisKumarSabanis2016,LiuMao2013,SongZhang2012,HighamKloeden2007,hj11,ZhangSong2012,KloedenNeuenkirch2013,MarionMaoRenshaw2002,LionnetReisSzpruch2018,HuLiMao2018,BurrageEtAl2004,ZhangZhouJi2019,ZhouZhangHongSong2016,JimenezCruz2012,Komori2007,KomoriCohenBurrage2017,AbdulleCirilli2008,ChassagneuxJacquierMihaylov2016,SzpruchZhang2018,NgoTaguchi2017,LanXia2018,Milovsevic2011,ZhouHu2017,ObradovicMilovsevic2017,MaoYouMao2016,ZhouXue2014,ZhouFang2013,NgoTaguchi2016,NguyenEtAl2018,BlomkerSchillingsWacker2018,KumarSabanis2017,MoraEtAl2017,JiYuan2017,ZhangMa2017,mt05,DiazJerez2017,BeynIsaakKruse2016,Halidias2014,GuoLiuMao2018,KumarSabanis2017Milstein,AkhtariBabolianNeuenkirch2015,BeynIsaakKruse2017,FangGiles2018,FangGiles2018MMC,Gyoengy1998,SabanisZhang2019,HanMaDing2019,NgoLuong2019,NgoLuong2017,Hatzesberger2019,BrosseEtAL2018,LiMaoYin2018,NgoTaguchi2018,KellyRodkinaRapoo2018,KellyLord2018arxiv,LiuEtAL2018arxiv,ChenGanWang2018arxiv,Shao2018arxiv,IzgiCetin2018,TambueMukam2018,ZhanJiangLiu2018,Kumar2017arxiv,ProtterQiuMartin2017,Lionnet2016,LionnetReisSzpruch2015,KumarSabanis2014,HuRen2014,ErdoganLord2018,TanYuan2018Truncated,BaoHuangYuan2018,GuoEtAl2018,KumarKumar2018,DengEtAl2019,Hatzesberger2019,Geiser2019} for  convergence and simulation results for  explicit numerical approximation   schemes for  SODEs with superlinearly growing nonlinearities, 
	 we refer, e.g., to
	 \cite{GoengySabanisS2015,jentzenpusnik2015,BeckerJentzen2016,salimova2016strong,jentzen2019strong,beckergess2017,BrehierCuiHong2018,BrehierGoudenege2018,JentzenRockner2015,KamraniBloemker2017,HutzenthalerJentzen2014,BlomkerKamrani2017,Mazzonetto2018,Breckner2000,BloemkerJentzen2013,BeckerEtAl2018,YangZhang2017,BloemkerKamraniHosseini2013,GHB2017,g99,jkn09a,j08b,Wang2018,CampbellLord2018,ZhangKarniadakis2017,klns11, KamraniHosseini2012,Kamrani2016,Doersek2012} for convergence and simulation results for explicit numerical approximation   schemes  for  SPDEs with superlinearly growing nonlinearities,
	  we refer, e.g., to
	\cite{Hu1996,Higham2002,MaoSzpruch2013Rate,NeuenkirchSzpruch2014,HighamKloeden2007,MaoSzpruch2013,KloedenNeuenkirch2013,Yue2016,YueHuangJiang2014,BurrageEtAl2004,ForoushTahmasebi2012,Milovsevic2015,ZhouJin2019,ZhouJin2017,Zhou2015,Milovsevic2014,LindnerStroot2017,BeynIsaakKruse2016,SauerStannat2015,HighamMaoSzpruch2013,AnderssonKruse2017,ZhouJin2019ANM,Milovsevic2018,Wen2018,TanYuan2018arxiv,IzgiCetin2018,Geiser2019} for convergence and simulation results for implicit Euler-type numerical approximation schemes for SODEs with superlinearly growing nonlinearities, and
	 we refer, e.g., to \cite{gm05,Brzezniak2013,KovacsLarsson2018,FurihataKovacsLarssonLindgren2018,KovacsLarssonLindgren2015,LiuQiao2018,CoxVanNeerven2013,BessaihMillet2018,CarelliProhl2012,BessaihEtAl2018,Printems2001,KamraniEtAl2018,GlattTemamWang2017,Gazeau2014,KossiorisZouraris2013,DuanYang2013,LiYang2018,CuiHongLiuZhou2019,g99,g98,ZhangKarniadakis2017} for convergence and simulation results for implicit Euler-type numerical approximation schemes for SPDEs with superlinearly growing nonlinearities.

The most basic numerical scheme for SODEs, the Euler-Maruyama scheme, and similar explicit approximation schemes for SODEs (such as the Milstein scheme) have been shown to diverge strongly and numerically weakly  in the case of one-dimensional SODEs with superlinearly growing nonlinearities; see   \cite[Theorem~2.1]{hjk11} and \cite[Theorem~2.1]{HutzenthalerJentzenKloeden2013}. More specifically, Theorem~2.1 in \cite{HutzenthalerJentzenKloeden2013} immediately implies the following result.
	\begin{theorem}
		\label{arnulf}
	Let $\alpha, \beta, c \in (1,\infty)$ , $T\in (0,\infty)$, let $(\Omega, \mathcal{F}, (\mathcal{F}_t)_{t\in [0,T]}, \P)$ be a filtered probability space, let $W \colon [0,T]\times \Omega \to   \R$ be a standard $(\mathcal{F}_t)_{t \in [0,T]}$-Brownian motion, let $\xi \colon \Omega \to \R$ be an $\mathcal{F}_0/\mathcal{B}(\R)$-measurable function, let $\mu, \sigma \colon \R \to \R$ be $\mathcal{B}(\R)/\mathcal{B}(\R)$-measurable functions, let $Y^N_n\colon \Omega \to \R$, $n \in \{0,1,\dots,N\}$, $N \in \N = \{1, 2, 3, \ldots \}$, satisfy for all  $N \in \N$, $n \in \{0,1,\dots,N-1\}$ that $Y_0^N=\xi$ and 
	\begin{equation}
	Y_{n+1}^N=Y_n^N+ \tfrac{T}{N} \,\mu(Y_n^N)  + \sigma(Y_n^N) \, \big(W_{\frac{(n+1)T}{N}}    - W_{\frac{nT}{N}} \big),
	\end{equation}
	assume for all $x \in (-\infty, -c] \cup [c, \infty)$  that $|\mu(x)|+|\sigma(x)| \geq \tfrac{|x|^\alpha}{c}$, and assume for all $x \in [1,\infty) $ that $ \big( \big[\P(\sigma(\xi) \neq   0)>0\big] \text{ or } \big[    \P(|\xi|\geq x)\geq \beta^{(-x^\beta)} \big] \big)$. Then  it holds for all $p \in (0,\infty)$ that $\lim_{N\to \infty} \E[|Y_N^N|^p]=\infty$.
	\end{theorem}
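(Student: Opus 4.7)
The plan is to derive Theorem~\ref{arnulf} as a direct corollary of Theorem~2.1 in \cite{HutzenthalerJentzenKloeden2013}. First I would identify the data $(\Omega, \mathcal{F}, (\mathcal{F}_t)_{t \in [0,T]}, \P, W, \xi, \mu, \sigma, (Y^N_n)_{n,N})$ of our statement with the data of the cited theorem and verify that the recursion defining $Y^N$ is precisely the explicit Euler--Maruyama scheme analysed there. Our superlinear lower bound $|\mu(x)| + |\sigma(x)| \geq |x|^\alpha/c$ on $(-\infty,-c]\cup[c,\infty)$ supplies the essential growth assumption, and the dichotomy imposed on $\xi$ --- either $\P(\sigma(\xi) \neq 0) > 0$, or the tails of $|\xi|$ dominate $\beta^{-x^\beta}$ --- matches the admissible classes of initial conditions in the cited theorem. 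Once this identification is set up, the conclusion $\lim_{N \to \infty} \E[|Y^N_N|^p] = \infty$ for every $p \in (0,\infty)$ transfers verbatim.

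To motivate why the underlying result is true, I would sketch the cascade mechanism driving its proof. The strategy is to construct an event $A_N$ of not-too-small probability on which $|Y^N_k|$ exceeds some threshold $R_N$ at an early step $k$. When $\P(\sigma(\xi) \neq 0) > 0$ this trigger is manufactured from a single Brownian increment large enough to make $\sigma(\xi)\,(W_{T/N}-W_0)$ overwhelm $\xi + \tfrac{T}{N}\mu(\xi)$, whereas in the heavy-tail case one simply takes $A_N \supseteq \{|\xi| \geq R_N\}$ and uses the stretched-exponential tail bound on $\xi$. Once $|Y^N_k| \geq R_N \geq c$, the superlinear lower bound yields $|\mu(Y^N_k)| \geq |Y^N_k|^\alpha/c$, and on the sub-event where the next diffusion increment does not cancel the drift the recursion produces $|Y^N_{k+1}| \gtrsim \tfrac{T}{cN}|Y^N_k|^\alpha$. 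Iterating this estimate over the remaining $N-k$ time steps yields a doubly exponential blow-up of the form $|Y^N_N| \gtrsim (R_N)^{\alpha^{N-k}}$ up to harmless polynomial corrections in $N$.

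The closing step would combine this growth with a lower bound on $\P(A_N)$ and verify that $\E[|Y^N_N|^p\,\mathbf{1}_{A_N}] \to \infty$. The main obstacle lies precisely here: one must balance the Gaussian (or stretched-exponential) cost of producing the triggering event against the doubly exponential gain, and simultaneously restrict to a further sub-event on which every subsequent diffusion increment $\sigma(Y^N_n)\big(W_{(n+1)T/N} - W_{nT/N}\big)$ remains small enough not to disrupt the amplification. All of these technicalities are carried out in \cite{HutzenthalerJentzenKloeden2013}, so the proof of Theorem~\ref{arnulf} itself reduces to the bookkeeping of matching hypotheses described above.
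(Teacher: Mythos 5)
Your proposal follows the same route as the paper: the paper gives no separate proof of Theorem~\ref{arnulf}, but simply observes that it is an immediate consequence of Theorem~2.1 in \cite{HutzenthalerJentzenKloeden2013}, which is exactly the hypothesis-matching deduction you describe. Your additional sketch of the blow-up cascade is consistent with the cited work's argument but is not needed beyond the bookkeeping you already identify.
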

Theorem~\ref{arnulf} above proves strong and numerically weak divergence for the Euler-Maruyama scheme and similar approximation schemes (such as the Milstein scheme) in the case of one-dimensional SODEs with superlinearly growing nonlinearities. However, it remained an open question whether the divergence phenomenon in Theorem~\ref{arnulf} also holds in the case of SPDEs with superlinearly growing nonlinearities.
In particular, it remained an open question whether such a divergence phenomenon also holds in the case of reaction-diffusion-type SPDEs with polynomial coefficients such as stochastic Allen-Cahn equations.  We answer this question by proving that standard Euler-type approximation schemes for SPDEs (such as exponential Euler and linear-implicit Euler schemes) diverge strongly and numerically weakly in the case of reaction-diffusion-type SPDEs with polynomial coefficients such as stochastic Allen-Cahn equations.
To be more precise, the main result of this paper, Theorem \ref{risultatofinale} in Section~\ref{sec3} below, establishes strong and numerically weak divergence for both  full-discrete exponential Euler and full-discrete linear-implicit Euler approximations in the case of reaction-diffusion-type SPDEs with polynomial coefficients (including stochastic Allen-Cahn equations as special cases). To illustrate the findings of the main result of this article  we now present in the following theorem a special case of Theorem \ref{risultatofinale}.
	\begin{theorem}
	\label{nuovo}
	Let $(H, \left\|\cdot\right\|_H, \left<\cdot,\cdot\right>_H)$ be the $\R$-Hilbert space of equivalence classes of Lebesgue square integrable functions from $(0,1)$ to $\R$,
	let $A\colon D(A) \subseteq H \to H$ be the Laplacian with periodic boundary conditions on $H$, let $e_n \in H$, $n \in \Z$, satisfy for all $n \in \N$ that
	$e_0(\cdot)=1$, 
	$e_n(\cdot)=\sqrt{2}\cos(2n\pi (\cdot))$,
	and 
	$e_{-n}(\cdot)=\sqrt{2}\sin(2n\pi (\cdot))$,
	let $T, \eta  \in (0,\infty)$, let $(H_r, \left\| \cdot \right\|_{H_r}, \left< \cdot, \cdot \right>_{H_r} )$, $r \in \R$, be a family of interpolation spaces  associated to $\eta - A$, let $P_N \colon H \to H$, $N \in \N$, be the  linear operators which satisfy for all $N \in \N$,  $v \in H$ that $P_N (v) = \sum_{n=-N}^{N}\left<e_n, v\right>_H e_n$, 
	let $(\Omega,\mathcal{F},\mathbb{P})$ be a probability space, let  $q \in \{2,3,\dots\}$, $a_0,a_1,\dots,a_{q-1} \in \R$, $a_q \in \R\backslash\{0\}$, $\nu \in (\nicefrac{1}{4},\nicefrac{3}{4})$, 
	$\xi \in H_{\nicefrac{1}{3}}$,
	let $W \colon [0,T]\times \Omega \to H_{-\nu}$ be an $\operatorname{Id}_{H}$-cylindrical Wiener process, 
	let $S_N \colon H_{-\nu} \to H$, $N \in \N$, be   linear operators which satisfy for all $N \in \N$ that
	$S_N\in \{e^{\nicefrac{T}{N}A}, (I-\nicefrac{T}{N}A)^{-1}      \}$,
	and let $Y^N\colon \{0,1,\dots,N\}\times\Omega \to H$, $N \in \N$,  be the stochastic processes which satisfy for all   $N \in \N$, $n \in \{0,1,\dots,N-1\}$ that $Y_0^N=P_N(\xi)$ and 
	\begin{equation}
	Y_{n+1}^N=P_NS_N\Big(Y_n^N+\tfrac{T}{N}\!\left(\textstyle\sum_{k=0}^{q}a_k\big[Y_n^N\big]^k\right)+ \big(W_{\frac{(n+1)T}{N}}    - W_{\frac{nT}{N}} \big)\Big).
	\end{equation}
	Then it holds for all $p \in (0,\infty)$ that $\liminf_{N\to \infty} \E\!\left[\|Y_N^N\|_H^p \right]=\infty$.
	\end{theorem}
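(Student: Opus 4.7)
The plan is to reduce to the one-dimensional divergence result of Theorem \ref{arnulf} by projecting onto the kernel of $A$, which is spanned by the constant mode $e_0\equiv 1$. Since $Ae_0 = 0$, one has $S_Ne_0 = e_0$ and $P_Ne_0 = e_0$ for every $N\in\N$, so testing the scheme against $e_0$ produces, with $V_n^N := \langle e_0, Y_n^N\rangle_H$ and $B_t := \langle e_0, W_t\rangle_H$ (a standard real Brownian motion), the real-valued recursion
\begin{equation*}
V_{n+1}^N = V_n^N + \tfrac{T}{N}\smallsum_{k=0}^q a_k \textstyle\int_0^1 (Y_n^N(x))^k\,dx + \big(B_{(n+1)T/N}-B_{nT/N}\big).
\end{equation*}
Since $\|Y_N^N\|_H \geq |V_N^N|$ by Cauchy--Schwarz, it suffices to prove $\liminf_{N\to\infty}\E[|V_N^N|^p] = \infty$ for every $p>0$.

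Decomposing $Y_n^N = V_n^N e_0 + R_n^N$ with $R_n^N$ orthogonal to $e_0$ (so that $\int_0^1 R_n^N(x)\,dx = 0$), the pointwise binomial expansion of the nonlinearity yields
\begin{equation*}
\smallsum_{k=0}^q a_k\textstyle\int_0^1(Y_n^N(x))^k\,dx = F(V_n^N) + \mathcal{E}_n^N, \qquad F(v) := \smallsum_{k=0}^q a_k v^k,
\end{equation*}
where the remainder $\mathcal{E}_n^N$ is a polynomial in $V_n^N$ of degree at most $q-2$ with coefficients depending on the moments $\int_0^1 R_n^N(x)^j\,dx$, $2\le j\le q$. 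The residual $R_n^N$ lives on the non-constant Fourier modes, on which $S_N$ acts as a strict contraction; using the Hilbert--Schmidt bound $\|(I-Q)(\eta - A)^{-\nu}\|_{\mathrm{HS}}<\infty$ (valid since $\nu>\nicefrac{1}{4}$, where $Q := \langle e_0,\cdot\rangle_H\, e_0$) and standard discrete stochastic convolution estimates, one controls the $L^{2q}((0,1);\R)$-moments of $R_n^N$ uniformly in $n\leq N$ on an event $G_N$ of high probability. On $G_N$ the recursion for $V_n^N$ is a controlled perturbation of the explicit Euler scheme for the SODE $dV_t = F(V_t)\,dt + dB_t$, whose drift $F$ grows like $a_q v^q$.

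The final step is to transport the divergence mechanism of Theorem \ref{arnulf} (cf.\ \cite{HutzenthalerJentzenKloeden2013}) to this perturbed recursion. Once $|V_n^N|$ exceeds a critical threshold of order $(N/(|a_q|T))^{1/(q-1)}$, the drift $(T/N)|a_q||V_n^N|^q$ dominates both the Gaussian increment and $\mathcal{E}_n^N$, giving $|V_{n+1}^N|\geq \tfrac{|a_q|T}{2N}|V_n^N|^q$ and hence a doubly-exponential cascade. A triggering argument modelled on \cite{HutzenthalerJentzenKloeden2013} shows that the noise pushes $V_n^N$ past this threshold at some step before $N$ on an event $A_N$ whose probability decays at most exponentially in $N$; on $A_N$ one has $|V_N^N|\geq\exp(\exp(cN^\gamma))$ for some $c,\gamma>0$, and this double-exponential blow-up dwarfs the probability decay, forcing $\E[|V_N^N|^p]\to\infty$.

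The main obstacle is propagating the control on $R_n^N$ throughout the cascade: the linearised coupling $F'(V_n^N)R_n^N\sim q a_q (V_n^N)^{q-1}R_n^N$ amplifies $\|R_n^N\|_{L^{2q}}$ at essentially the same rate as $|V_n^N|$, so that enforcing $|\mathcal{E}_n^N|\leq\tfrac{1}{2}|a_q||V_n^N|^q$ during the cascade requires quantitative control of the ratio $\|R_n^N\|_{L^{2q}}/|V_n^N|$ at the triggering step. This is achieved by conditioning on the non-constant modes of $W$ (which are independent of $B$) and exploiting that in the triggering phase $|V_n^N|$ is still only of critical size. The entire argument is uniform for both schemes $S_N\in\{e^{(T/N)A}, (I-\tfrac{T}{N}A)^{-1}\}$, relying only on the invariance $S_Ne_0 = e_0$ and the $L^2$-contractivity $\|S_N\|_{L(H)}\leq 1$.
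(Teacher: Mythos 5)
Your reduction to the zero mode is the same starting point as the paper: both project onto $e_0$ (using $S_Ne_0=e_0$, $P_Ne_0=e_0$), decompose $Y_n^N=\langle e_0,Y_n^N\rangle_H e_0+R_n^N$, and aim to show that the superlinear drift of the constant mode produces doubly exponential growth that overwhelms a small-probability event. The gap is in how you control the remainder $R_n^N$. You claim that ``standard discrete stochastic convolution estimates'' give uniform control of the $L^{2q}$-norm of $R_n^N$ for all $n\le N$ on a high-probability event $G_N$. This cannot work: $R_n^N$ is not driven by the noise alone, it is forced by the nonlinearity, and during the cascade the dominant coupling term $\tfrac{T}{N}\,q\,a_q\,(V_n^N)^{q-1}R_n^N$ multiplies $\|R_n^N\|$ per step by roughly $\tfrac{T}{N}q|a_q||V_n^N|^{q-1}$, which is $q$ times the amplification factor of $V_n^N$ itself; the smoothing of $S_N$ on the low nonzero modes is only $1-O(\nicefrac{1}{N})$ per step and cannot compensate. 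Hence the ratio $\|R_n^N\|_{L^{2q}}/|V_n^N|$ grows at least geometrically (like $q^n$) along the cascade, so keeping $|\mathcal{E}_n^N|\le\tfrac12|a_q||V_n^N|^q$ up to time $N$ forces the ratio to be exponentially small in $N$ at the start of the cascade --- an event of (at best) stretched-exponentially small probability, not a high-probability event. Your final paragraph acknowledges the amplification but the proposed remedy (conditioning on the non-constant modes and using that $|V_n^N|$ is only of critical size at the triggering step) does not address it, because the amplification acts at every step after triggering, not only at the trigger. Relatedly, your claim that the relevant event $A_N$ has probability decaying ``at most exponentially in $N$'' would not survive this correction, though the divergence conclusion could still be salvaged since any such decay is dominated by the double exponential.

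This is exactly the point where the paper's argument is organized differently: instead of seeking high-probability control, it builds the remainder control into the rare event itself. The sets $\mathbb{H}^N_{n,r}$ in Proposition 3.7 require $\|\mathcal{R}[Y_n^N]\|_{L^p}\le\tfrac12\rho_{N,r}^{\,n-N}|\langle e_0,Y_n^N\rangle_H|$, i.e.\ the ratio starts at an exponentially small threshold $\rho^{-N}$ and is allowed to deteriorate by the fixed factor $\rho$ per step, matching the amplification; the probability that the noise increments keep the iterate inside these sets at every step is bounded from below by Gaussian small-ball estimates (Lemma 3.4, Corollary 3.5), and the abstract reverse-Lyapunov bound (Proposition 2.7) then shows that the resulting probability, though decaying super-exponentially (roughly like $\exp(-CN^{3})$ up to logarithms), is beaten by the growth $\theta^{q^{N}}$ of the zero mode. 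To repair your argument you would have to replace the high-probability event $G_N$ by such a rare event with step-dependent ratio thresholds and prove a quantitative lower bound on its probability; as written, the proposal has a genuine gap.
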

	Theorem \ref{nuovo} above is an immediate consequence of 
Corollary~\ref{cor:last} in Section \ref{sec3} below. 
Corollary~\ref{cor:last}, in turn, follows from Theorem~\ref{risultatofinale}, which is the main result of this article.
Note that the assumption in Theorem~\ref{nuovo} that $(H_r, \left\| \cdot \right\|_{H_r}, \left< \cdot, \cdot \right>_{H_r} )$, $r \in \R$, is a family of interpolation spaces associated to $\eta - A$ ensures that $H_0 = H$, $H_1 = D( A )$, $H_2 = D( A^2 )$, $H_3 = D( A^3 )$, \ldots (cf., e.g., Sell \& You~\cite[Section~3.7]{sy02}). Moreover, observe that in the case where for all $N \in \N$ it holds that $q=3$, $a_0 =0$, $a_1 \in (0, \infty)$, $a_2 =0$, $a_3 \in (-\infty, 0)$, and $S_N= e^{\nicefrac{T}{N}A}$ we have that Theorem \ref{nuovo} proves  strong and numerically weak divergence for the full-discrete explicit exponential Euler scheme for stochastic Allen-Cahn equations. Furthermore, note that in the case where for all $N \in \N$ it holds that $q=3$, $a_0 =0$, $a_1 \in (0, \infty)$, $a_2 =0$, $a_3 \in (-\infty, 0)$, and $S_N= (I-\nicefrac{T}{N}A)^{-1} $ we have that Theorem \ref{nuovo} proves strong and numerically weak divergence for the full-discrete linear-implicit Euler scheme for  stochastic Allen-Cahn equations.
We prove Theorem \ref{nuovo} and Theorem \ref{risultatofinale}, respectively,
 through an application of an abstract divergence theory which we have developed in Section  \ref{sec2} of this paper. 	We also refer, e.g., to \cite{hjk11,HutzenthalerJentzenKloeden2013,Hairer2015,jentzen2016slow,MullerGronbachYaroslavtseva2017,Yaroslavtseva2017,GerencserJentzen2017,Milosevic2019,GronbachYaroslavtseva2018,HefterEtAl2019} for lower bounds for strong and weak approximation errors 
 for numerical approximation
 schemes for SDEs with non-globally Lipschitz continuous nonlinearities.
	
The remainder of this article is organized as follows.
	 In Section \ref{sec2} we employ 
	 reverse Lyapunov-type functions to establish suitable lower bounds for  a class of general stochastic processes; cf., e.g.,  Corollary~\ref{cor6}. In particular, we establish in Lemma~\ref{lemmathree}  in Section~\ref{sec2} lower bounds for the probabilities of certain rare events.
 Lemma~\ref{lemmathree} is used in our proof of Proposition \ref{theorem9}, which is the main result of Section~\ref{sec2}. Proposition~\ref{theorem9}, in turn, is employed in our proof of Corollary~\ref{cor6}. In Section~\ref{sec3} we employ the general lower bounds which we have proved in Section~\ref{sec2} to establish Theorem~\ref{risultatofinale}, which is the main result of this article.

	 \subsection*{Acknowledgments}
	 This article is to a small extend based on the master thesis of RK written in 2013--2014 at ETH Zurich under the supervision of AJ and to a large extent based on the master thesis of MB written in 2016--2017 at ETH Zurich under the supervision of AJ. This project has been partially  supported 
	 through the SNSF-Research project $ 200020\_175699 $ 
	 ``Higher order numerical approximation methods 
	 for stochastic partial differential equations''
and
	  through the SNSF-Research project $ 200021\_156603 $ ``Numerical 
	 approximations of nonlinear stochastic ordinary and partial differential equations''.

	\section{Reverse a priori bounds based on Lyapunov-type functions}
	\label{sec2}
	Throughout this section the following setting is frequently used.
	\begin{setting}
	\label{sec2new}
	 For every two measurable spaces $(\Omega_1,\mathcal{F}_1)$ and $(\Omega_2,\mathcal{F}_2)$ let  $\mathcal{M}(\mathcal{F}_1,\mathcal{F}_2)$ be the set of all $\mathcal{F}_1$/$\mathcal{F}_2$-measurable functions, 
	let $(H,\mathcal{H})$ and $(U,\mathcal{U})$ be measurable spaces, let $\Phi \colon H \times U \to H$ be an $(\mathcal{H}\otimes \mathcal{U})$/$\mathcal{H}$-measurable function, let $(\Omega,\mathcal{F},\P)$ be a probability space, for every set $R\subseteq [-\infty, \infty]$ and every function $f\colon \Omega \to R$ let $\llbracket f \rrbracket$ be the set given by $\llbracket f\rrbracket=\{ g\in \mathcal{M}(\mathcal{F},\mathcal{B}([0,\infty)))\colon (\exists \, A \in \{B \in  \mathcal{F} \colon \P(B) =1 \} \colon ( \forall \, \omega \in A \colon f(\omega) = g(\omega))  )\}$,
	let $N \in \N$, $c \in (0,1]$, $\alpha, \theta \in (1,\infty)$,  $\mathbb{H}_0,\mathbb{H}_1,\dots, \mathbb{H}_N \in \mathcal{H}$, let $Z_1,Z_2,\dots,Z_N \colon \Omega \to U$ be i.i.d.\ random variables, let $Y_0,Y_1,\dots,Y_N \colon \Omega \to H$ be random variables  which satisfy for all $ n \in \{1,2,\dots,N\}$ that $ Y_n=\Phi(Y_{n-1},Z_n)$, assume that  $\sigma(Y_0)$ and $\sigma(Z_1,Z_2,\dots, Z_N)$ are independent  on $(\Omega,\mathcal{F},\P)$, and let $\mathcal{V} \colon H \to [0,\infty)$ be an $\mathcal{H}$/$\mathcal{B}([0,\infty))$-measurable function.
	\end{setting}

	\subsection{A reverse Gronwall-type inequality}
	\label{gron}
In the next elementary result, Lemma~\ref{lemma0} below, we present a reverse Gronwall-type inequality. We employ this reverse Gronwall-type inequality to establish  lower bounds for the probabilities of certain rare events in  Lemma \ref{lemmathree} below.
	\begin{lemma}
		\label{lemma0}
		Let $c, \alpha \in (0,\infty)$, $N \in \N$,  $e_0, e_1,\dots, e_N \in [0,\infty)$  satisfy for all $n \in \{0,1,\dots,N-1\}$ that 
		\begin{equation}
		\label{estlemma5}
	    e_{n+1}\geq c\,[e_n]^{\,\alpha}.
		\end{equation}
		Then it holds for all $n \in \{0,1,\dots,N\}$ that 
		\begin{equation}
		\label{estprovenew}
		e_n \geq c^{\big(  \sum_{k=0}^{n-1}   \alpha^k  \big)} \cdot [e_0]^{(\alpha^n)}.
		\end{equation}
	\end{lemma}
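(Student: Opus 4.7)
The plan is to prove \eqref{estprovenew} by induction on $n \in \{0,1,\dots,N\}$, with the geometric structure of the exponents managed via the identity $1+\alpha\sum_{k=0}^{n-1}\alpha^k=\sum_{k=0}^{n}\alpha^k$.

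For the base case $n=0$, the empty sum makes the right-hand side equal $c^{0}\cdot[e_0]^{(\alpha^0)}=e_0$, so the inequality $e_0\geq e_0$ holds trivially. For the inductive step, assuming $e_n \geq c^{(\sum_{k=0}^{n-1}\alpha^k)}\cdot[e_0]^{(\alpha^n)}$, I would combine this with the hypothesis \eqref{estlemma5}. Since both sides are nonnegative and $x\mapsto x^\alpha$ is monotone on $[0,\infty)$, I can raise the induction hypothesis to the power $\alpha$ and multiply by $c$ to obtain
\begin{equation*}
e_{n+1}\;\geq\; c\,[e_n]^{\alpha}\;\geq\; c\cdot c^{\alpha\sum_{k=0}^{n-1}\alpha^k}\cdot[e_0]^{(\alpha^{n+1})}\;=\;c^{(1+\sum_{k=0}^{n-1}\alpha^{k+1})}\cdot[e_0]^{(\alpha^{n+1})},
\end{equation*}
and then the reindexing $1+\sum_{k=0}^{n-1}\alpha^{k+1}=\alpha^0+\sum_{j=1}^{n}\alpha^{j}=\sum_{k=0}^{n}\alpha^{k}$ closes the induction.

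The only mild subtleties are boundary cases: if $e_0=0$ then the right-hand side of \eqref{estprovenew} is $0$ (since $\alpha^n>0$ ensures $[e_0]^{(\alpha^n)}=0$), and the bound is trivial; and the empty-sum convention $\sum_{k=0}^{-1}\alpha^k=0$ must be used in the base case. No obstacle is expected — the argument is purely a mechanical induction and the geometric-series bookkeeping is the only step requiring attention.
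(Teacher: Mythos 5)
Your proposal is correct and follows essentially the same route as the paper: induction on $n$, raising the induction hypothesis to the power $\alpha$ via \eqref{estlemma5}, and closing with the reindexing $1+\sum_{k=0}^{n-1}\alpha^{k+1}=\sum_{k=0}^{n}\alpha^{k}$. The remarks on the empty-sum convention and the $e_0=0$ case are fine but not needed beyond what the paper's argument already handles implicitly.
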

	\begin{proof}[Proof of Lemma \ref{lemma0}]
		We prove \eqref{estprovenew} by induction on $n \in \{0,1,\dots,N\}$. For the base $n=0$ we note that 
		\begin{equation}
		e_0=c^0\cdot e_0=c^{(\sum_{k=0}^{-1}\alpha^k)} \cdot [e_0]^{(\alpha^0)}.
		\end{equation}
		This proves \eqref{estprovenew} in the base case $n=0$. For the induction step $\{0,1,\dots,N-1   \} \ni n \rightarrow n+1 \in \{1,2,\dots,N  \}$ assume that  \eqref{estprovenew} is fulfilled for an $n \in \{0,1,\dots,N-1\}$. 
		The induction hypothesis and  \eqref{estlemma5} ensure that 
		\begin{equation}
		\begin{split}
		 e_{n+1}  &\geq c \, [e_n]^{\, \alpha}
		\geq c \, \big[    c^{(  \sum_{k=0}^{n-1}   \alpha^k  )} \cdot [e_0]^{(\alpha^n)}   \big]^\alpha
		\\&=c \, \big[    c^{(\alpha \cdot   \sum_{k=0}^{n-1}   \alpha^k  )} \cdot [e_0]^{(\alpha \cdot\alpha^{n})}   \big]
		=c\,\big[c^{(  \sum_{k=0}^{n-1}   \alpha^{k+1}  )} \cdot [e_0]^{(\alpha^{n+1})}\big]
		\\&=c^{(1+\sum_{k=0}^{n-1}   \alpha^{k+1})}\cdot [e_0]^{(\alpha^{n+1})}
		=c^{(1+\sum_{k=1}^{n}   \alpha^{k})}\cdot [e_0]^{(\alpha^{n+1})}
		\\&=c^{(  \sum_{k=0}^{n}   \alpha^k )} \cdot [e_0]^{(\alpha^{n+1})}.
		\end{split}
		\end{equation}
		This proves \eqref{estprovenew} in the case $n+1$. Induction thus completes the proof of Lemma \ref{lemma0}.
	\end{proof}

	\subsection{Lower bounds for the probabilities of certain rare events}
	\label{low}
	\begin{lemma}
		\label{lemmatwo}
		Assume Setting~\ref{sec2new}, let $A_n \subseteq \Omega$, $n \in \{0,1,\dots,N\}$, be the sets which satisfy for all $n \in \{1,2,\dots,N\}$ that $A_0=\big\{  Y_0 \in \mathbb{H}_0        \big\}$ and 
		\begin{equation}
		\label{Annew}
		A_{n}= \big\{  \mathcal{V}(Y_{n}) \geq c [\mathcal{V}(Y_{n-1})]^\alpha   \big\} \cap \big\{  Y_{n} \in \mathbb{H}_{n}        \big\},
		\end{equation}
		and let $p_n \colon H \to [0,1]$, $n  \in \{1,2,\dots,N\}$, be the functions  which satisfy for all $n  \in \{1,2,\dots,N\}$, $v \in H$ that 
		\begin{equation}
		\label{pknew}
		p_n(v)=\P\Big( \big\{   \mathcal{V}(\Phi(v,Z_n)) \geq c   [\mathcal{V}(v)]^\alpha          \big\} \cap \big\{ \Phi(v,Z_n) \in \mathbb{H}_n\big\} \Big). 
		\end{equation}
		Then
		\begin{enumerate}[(i)]
				\item\label{itemi} it holds for all $n \in \{1,2,\dots,N\}$ that $	p_n \in \mathcal{M}(\mathcal{H}, \mathcal{B}([0,1]))$,
			\item \label{item0} it holds for all $n \in \{1,2,\dots,N\}$ that  $A_0 \in \sigma(Y_0)$ and $A_n \in \sigma(Y_0,Z_1,Z_2,\dots,Z_n)$,
			\item \label{item:P:equality}  it holds for all $n \in \{1,2,\dots,N\}$ that
			\begin{equation}
			\label{res1lemma7}
			\begin{split}
			&\P\Big(   (\cap_{k=0}^n  A_k )\, \big| \, \sigma (Y_0) \Big)
			\\&= \E\bigg[ p_n(Y_{n-1}) \,\mathbbm{1}^\Omega_{\{ \mathcal{V}(Y_{n-1})\geq c^{(\sum_{l=0}^{n-2} \alpha^l)}\,[\mathcal{V}(Y_0)]^{(\alpha^{(n-1)})}         \}} \,\mathbbm{1}^\Omega_{ (\cap_{k=0}^{n-1}  A_k )} \,\Big|\, \sigma (Y_0) \bigg],
			\end{split}
			\end{equation}
			and
			\item\label{item:last}  it holds for all $n \in \{1,2,\dots,N\}$ that
			\begin{align}
			\begin{split}
			\label{res2lemma7}
			&\P\Big(   (\cap_{k=0}^n  A_k )\, \big| \, \sigma (Y_0) \Big) \Big\llbracket  \mathbbm{1}^\Omega_{ \{\mathcal{V}(Y_0)  \geq  c^{\left(\nicefrac{1}{(1-\alpha)}\right)}  \theta \}    }     \Big\rrbracket 
			\\&\geq  \inf \!\Big( \big\{ p_n(v) \colon (v \in \mathbb{H}_{n-1} \colon  \mathcal{V}(v)\geq   \theta^{(\alpha^{(n-1)})} )\big\} \cup \{1\} \Big) \\
			& \quad \cdot
			\P\Big(   (\cap_{k=0}^{n-1}  A_k )\, \big| \, \sigma (Y_0) \Big)\Big\llbracket\mathbbm{1}^\Omega_{ \{\mathcal{V}(Y_0)  \geq  c^{\left(\nicefrac{1}{(1-\alpha)}\right)}  \theta \}    }     \Big\rrbracket.
			\end{split}
			\end{align}
		\end{enumerate}
	\end{lemma}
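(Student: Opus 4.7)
I would treat the four assertions sequentially, with the substance concentrated in parts (iii) and (iv).

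Parts (i) and (ii) are essentially measurability bookkeeping. For (i), rewriting
\begin{equation*}
p_n(v) = \int_U \mathbbm{1}_{\{\mathcal{V}(\Phi(v,z)) \geq c [\mathcal{V}(v)]^\alpha\}} \, \mathbbm{1}_{\{\Phi(v,z) \in \mathbb{H}_n\}} \, (\P \circ Z_n^{-1})(dz),
\end{equation*}
the measurability in $v$ follows from the $(\mathcal{H} \otimes \mathcal{U})/\mathcal{H}$-measurability of $\Phi$, the measurability of $\mathcal{V}$, and Fubini's theorem applied to a parameter-dependent integral. For (ii), an elementary induction on $n$ using $Y_n = \Phi(Y_{n-1},Z_n)$ shows $Y_n \in \mathcal{M}(\sigma(Y_0,Z_1,\dots,Z_n),\mathcal{H})$, after which $A_n \in \sigma(Y_0,Z_1,\dots,Z_n)$ is immediate from \eqref{Annew}.

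Part (iii) I would handle by a tower-property argument relative to $\mathcal{G}_{n-1} := \sigma(Y_0,Z_1,\dots,Z_{n-1})$. Writing $\cap_{k=0}^n A_k = (\cap_{k=0}^{n-1} A_k) \cap \{\mathcal{V}(Y_n) \geq c [\mathcal{V}(Y_{n-1})]^\alpha\} \cap \{Y_n \in \mathbb{H}_n\}$, using that $\cap_{k=0}^{n-1} A_k \in \mathcal{G}_{n-1}$ by (ii), that $Y_{n-1}$ is $\mathcal{G}_{n-1}$-measurable, and that $Z_n$ is independent of $\mathcal{G}_{n-1}$ (which follows from the independence of $\sigma(Y_0)$ and $\sigma(Z_1,\dots,Z_N)$ together with the i.i.d.\ assumption), a standard substitution yields $\P(\cap_{k=0}^n A_k \,|\, \mathcal{G}_{n-1}) = p_n(Y_{n-1}) \, \mathbbm{1}_{\cap_{k=0}^{n-1} A_k}$ almost surely. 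Taking a further conditional expectation with respect to $\sigma(Y_0) \subseteq \mathcal{G}_{n-1}$ and inserting the redundant indicator $\mathbbm{1}_{\{\mathcal{V}(Y_{n-1}) \geq c^{(\sum_{l=0}^{n-2}\alpha^l)}[\mathcal{V}(Y_0)]^{(\alpha^{n-1})}\}}$, which equals $1$ almost surely on $\cap_{k=0}^{n-1} A_k$ by applying Lemma~\ref{lemma0} to the sequence $e_k := \mathcal{V}(Y_k) \cdot \mathbbm{1}_{A_k}$ (or, more directly, by iterating the defining inequality of the $A_k$'s), then delivers \eqref{res1lemma7}.

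For (iv), the plan is to combine (iii) with an algebraic cancellation. On the set $\{\mathcal{V}(Y_0) \geq c^{1/(1-\alpha)}\theta\} \cap (\cap_{k=0}^{n-1} A_k)$, Lemma~\ref{lemma0} gives $\mathcal{V}(Y_{n-1}) \geq c^{\sum_{l=0}^{n-2}\alpha^l} [\mathcal{V}(Y_0)]^{\alpha^{n-1}}$, and the identity $\sum_{l=0}^{n-2} \alpha^l + \tfrac{\alpha^{n-1}}{1-\alpha} = \tfrac{1}{1-\alpha}$, together with $c \in (0,1]$ and $\alpha > 1$ (so $c^{1/(1-\alpha)} \geq 1$), collapses this to $\mathcal{V}(Y_{n-1}) \geq c^{1/(1-\alpha)} \theta^{\alpha^{n-1}} \geq \theta^{\alpha^{n-1}}$. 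Since $A_{n-1}$ also forces $Y_{n-1} \in \mathbb{H}_{n-1}$, one may bound $p_n(Y_{n-1})$ below pointwise on this event by the infimum appearing on the right-hand side of \eqref{res2lemma7} (the adjoined $\{1\}$ merely guards against an empty constraining set, where the infimum would otherwise be $+\infty$ and the whole expression trivially zero). Multiplying \eqref{res1lemma7} by the $\sigma(Y_0)$-measurable indicator $\mathbbm{1}_{\{\mathcal{V}(Y_0) \geq c^{1/(1-\alpha)}\theta\}}$, replacing $p_n(Y_{n-1})$ by this constant lower bound inside the conditional expectation, and pulling the constant out yields \eqref{res2lemma7}. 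The main obstacle I anticipate is precisely this exponent bookkeeping: the threshold $c^{1/(1-\alpha)}\theta$ is chosen so that after raising to the power $\alpha^{n-1}$ and combining with the geometric-sum prefactor from Lemma~\ref{lemma0}, one recovers exactly $\theta^{\alpha^{n-1}}$ up to a harmless factor $\geq 1$; a secondary nuisance is the careful handling of the equivalence-class brackets $\llbracket \cdot \rrbracket$ when asserting the almost-sure identities between conditional expectations.
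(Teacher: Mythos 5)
Your plan follows the paper's own proof essentially step for step: Fubini-type measurability for (i), the induction $\sigma(Y_n)\subseteq\sigma(Y_0,Z_1,\dots,Z_n)$ for (ii), the tower property combined with a freezing/substitution lemma for independent inputs plus Lemma~\ref{lemma0} to insert the redundant indicator in (iii), and the same exponent bookkeeping (the factor $c^{\nicefrac{1}{(1-\alpha)}}$ cancelling the geometric-sum prefactor, together with $Y_{n-1}\in\mathbb{H}_{n-1}$ from $A_{n-1}$) to bound $p_n(Y_{n-1})$ below by the infimum and pull it out in (iv). The only nitpick is that Lemma~\ref{lemma0} should be applied pathwise to $e_k=\mathcal{V}(Y_k(\omega))$ for $\omega$ in the intersection event rather than to the sequence $\mathcal{V}(Y_k)\mathbbm{1}^{\Omega}_{A_k}$ (for which the hypothesis $e_{k+1}\geq c\,[e_k]^{\alpha}$ can fail off the event), but your stated fallback of iterating the defining inequalities of the $A_k$ is exactly what the paper does.
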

	\begin{proof}[Proof of Lemma \ref{lemmatwo} ]
		Throughout this proof let $\mathcal{G}_n \subseteq \mathcal{P}(\Omega)$, $n \in \{0,1,\dots,N\}$, be the sigma-algebras on $\Omega$ which satisfy for all $n \in \{1,2,\dots,N\}$ that $\mathcal{G}_0=\sigma(Y_0)$ and 
		\begin{equation}
		\label{Gnnew}
		\mathcal{G}_{n} = \sigma(Y_0,Z_1,Z_2,\dots,Z_{n}),
		\end{equation}
		let $C_k\subseteq H\times \Omega$, $k \in \{1,2,\dots,N\}$, and $D_k\subseteq H\times \Omega$, $k \in \{1,2,\dots,N\}$, be the sets which satisfy for all  $k \in \{1,2,\dots,N\}$ that 
		\begin{equation}
		C_k=\{(x,\omega)\in H\times \Omega \colon \mathcal{V}(\Phi(x,Z_k(\omega)))-c[\mathcal{V}(x)]^\alpha \geq 0      \}
		\end{equation}
		and 
		\begin{equation}
		D_k=\{ (x,\omega)\in H\times \Omega \colon \Phi(x,Z_k(\omega)) \in \mathbb{H}_k   \}, 
		\end{equation}
		let $\pi \colon H\times \Omega \to H$ and  $f_k\colon H\times\Omega \to \R$, $k \in \{1,2,\dots,N\}$,  be the functions which satisfy for all $k \in \{1,2,\dots,N\}$, $(v,\omega) \in H\times \Omega$ that 
		\begin{equation}
		\pi(v,\omega)=v\qquad and \qquad f_k(v,\omega)=\mathbbm{1}^{H\times \Omega}_{C_k \cap D_k }   (v,\omega),
		\end{equation}
		let $ \Psi_k \colon H\times\Omega  \rightarrow H \times U$, $k \in \{1,2,\dots,N\}$, be the functions which satisfy for all $k \in \{1,2,\dots,N\}$, $(x,\omega) \in H\times\Omega$ that 
		\begin{equation}
			\Psi_k(x,\omega)=(x, Z_k(\omega)),
		\end{equation}
		and let $\Upsilon \colon H\times\Omega  \to  [0,\infty)$ be the function which satisfies for all $(x,\omega) \in H\times\Omega$ that 
		\begin{equation}
		\Upsilon(x,\omega)=c[(\mathcal{V}\circ \pi )(x,\omega)]^\alpha.
		\end{equation}
		Observe that for all $ k \in \{1,2,\dots,N\}$ it holds that  $f_k  \in \mathcal{M}(\mathcal{H}\otimes \mathcal{F}, \mathcal{B}(\R))$   if and only if  it holds  that 
		\begin{equation}
		\label{19}
		( C_k \cap D_k) \in \mathcal{H}\otimes \mathcal{F}.
		\end{equation}
		Next  note that for all $k \in \{1,2,\dots,N\}$ it holds that 
		\begin{equation}
		\label{w}
		\Psi_k \in \mathcal{M}(\mathcal{H}\otimes \mathcal{F}, \mathcal{H}\otimes \mathcal{U}).
		\end{equation}
		Moreover, observe that 
		\begin{equation}
		\label{phiprod}
		\Phi \in \mathcal{M}(\mathcal{H}\otimes \mathcal{U}, \mathcal{H}).
		\end{equation}
		Combining this with \eqref{w} implies  for all $k \in \{1,2,\dots,N\}$ that 
		\begin{equation}
			\label{221}
		\Phi \circ \Psi_k \in \mathcal{M}(\mathcal{H}\otimes \mathcal{F}, \mathcal{H}).
		\end{equation}
		 The fact that   $\forall \, k \in \{1,2,\dots,N\} \colon \mathbb{H}_k \in \mathcal{H}$ therefore proves for all $k \in \{1,2,\dots,N\}$ that 
		\begin{equation}
		\label{222}
		D_k=(\Phi \circ \Psi_k)^{-1}(\mathbb{H}_k) \in \mathcal{H}\otimes \mathcal{F}.
		\end{equation}
		In addition, note  that 
		\begin{equation}
		\label{wa}
			\pi \in \mathcal{M}(\mathcal{H}\otimes \mathcal{F}, \mathcal{H}) \qquad \text{and} \qquad 	\mathcal{V} \in \mathcal{M}(\mathcal{H}, \mathcal{B}([0,\infty))).
		\end{equation}
		This and  \eqref{221} imply for all $k \in \{1,2,\dots,N\}$ that
		\begin{equation}
		\Upsilon
		\in \mathcal{M}(\mathcal{H}\otimes \mathcal{F}, \mathcal{B}([0,\infty)))
		\end{equation}
		and 
		\begin{equation}
		\mathcal{V} \circ \Phi \circ \Psi_k \in \mathcal{M}(\mathcal{H}\otimes \mathcal{F}, \mathcal{B}([0,\infty))).
		\end{equation}
		 This ensures for all  $k \in \{1,2,\dots,N\}$ that
		\begin{equation}
		\left[\mathcal{V} \circ \Phi \circ \Psi_k - \Upsilon\right] \in \mathcal{M}(\mathcal{H}\otimes \mathcal{F}, \mathcal{B}(\R)).
		\end{equation}
		 Hence, we obtain for all $k \in \{1,2,\dots,N\}$  that 
		\begin{equation}
		C_k=( \mathcal{V} \circ \Phi \circ \Psi_k -\Upsilon    )^{-1}([0,\infty)) \in \mathcal{H}\otimes \mathcal{F}.
		\end{equation}
		Combining this with \eqref{222} establishes for all $k \in \{1,2,\dots,N\}$ that 
		\begin{equation}
		C_k \cap D_k \in \mathcal{H}\otimes \mathcal{F}.
		\end{equation}
		This and \eqref{19}  demonstrate that for all $k \in \{1,2,\dots,N\}$ it holds  that 
		\begin{equation}
		\label{233}
		f_k \in \mathcal{M}(\mathcal{H}\otimes \mathcal{F}, \mathcal{B}(\R)).
		\end{equation}
		Furthermore, note that it holds for all $k \in \{1,2,\dots,N\}$, $v \in H$ that 
		\begin{equation}
		p_k(v)=\int_{\Omega}f_k(v,\omega)\,\P(d\omega).
		\end{equation}
		 Fubini's theorem and \eqref{233} therefore ensure that for all $k \in \{1,2,\dots,N\}$ it holds that  
		 \begin{equation}
		 p_k \in \mathcal{M}(\mathcal{H}, \mathcal{B}([0,1])).
		 \end{equation}
		  This proves Item~\eqref{itemi}.
		Next note that 
		\begin{equation}
		\label{A0}
		A_0=\big\{  Y_{0} \in \mathbb{H}_{0}        \big\}=Y_0^{-1}(\mathbb{H}_0)\in \sigma(Y_0).
		\end{equation}
	Furthermore, observe that it holds for all $n  \in \{1,2,\dots,N\}$ that 
		\begin{equation}
		\label{sigmaAn}
		\big\{  \mathcal{V}(Y_{n}) \geq c [\mathcal{V}(Y_{n-1})]^\alpha   \big\} \in \sigma(Y_n,Y_{n-1}) \qquad \text{and} \qquad \big\{  Y_{n} \in \mathbb{H}_{n}        \big\} \in \sigma(Y_n). 
		\end{equation}
		In the next step we demonstrate that for all $n \in \{1,2,\dots,N\}$ it holds that 
		\begin{equation}
		\label{star}
		\sigma(Y_n)\subseteq \sigma(Y_0,Z_1,Z_2,\dots,Z_n).
		\end{equation}
		We prove \eqref{star} by induction on $n \in \{1,2,\dots,N \}$. Observe that the assumption that $Y_1=\Phi(Y_0,Z_1)$ and the assumption that $\Phi \in \mathcal{M}(\mathcal{H}\otimes \mathcal{U},\mathcal{H})$ ensure that 
		\begin{equation}
		\sigma(Y_1)\subseteq \sigma(Y_0,Z_1).
		\end{equation}
		This establishes \eqref{star} in the base case $n=1$. For the induction step $\{1,2,\dots,N-1\}\ni n \to n+1 \in \{2,3,\dots,N\}$ assume that \eqref{star} is fulfilled for an $n \in \{1,2,\dots,N-1\}$. The assumption that $\forall \, m \in \{1,2,\dots,N\}\colon Y_m=\Phi(Y_{m-1},Z_m)$ and the assumption that $\Phi \in \mathcal{M}(\mathcal{H}\otimes \mathcal{U}, \mathcal{H})$ assure that 
		\begin{equation}
		\label{start3}
		\sigma(Y_{n+1})\subseteq \sigma(Y_n,Z_{n+1}).
		\end{equation}
		Moreover, note that the induction hypothesis implies that 
		\begin{equation}
		\sigma(Y_n,Z_{n+1})\subseteq \sigma(Y_0,Z_1,Z_2,\dots, Z_{n+1}).
		\end{equation}
		Combining this with \eqref{start3} ensures that 
		\begin{equation}
		\sigma(Y_{n+1})\subseteq \sigma(Y_0,Z_1,Z_2,\dots,Z_{n+1}).
		\end{equation}
		This proves \eqref{star} in the case $n+1$. Induction thus completes the proof of $\eqref{star}$.
Combining \eqref{star} with \eqref{sigmaAn} and \eqref{Annew} proves that it holds for all $n  \in \{1,2,\dots,N\}$ that 
		\begin{equation}
		A_n \in \sigma(Y_0,Z_1,\dots,Z_n).
		\end{equation}
		This establishes Item~\eqref{item0}.
		 Next note that the tower property for conditional expectations implies for all $ k \in \{1,2,\dots,N\}$ that 
		\begin{equation}
		\begin{split}
		\P\Big( (\cap_{n=0}^{k} A_n ) \, \big| \, \sigma (Y_0)     \Big)&= \E\Big[\mathbbm{1}^\Omega_{(\cap_{n=0}^k A_n)}  \,   \big| \, \mathcal{G}_0  \Big]
		\\&=\E\Big[     \E\Big[\mathbbm{1}^\Omega_{(\cap_{n=0}^k A_n)}    \,   \big| \, \mathcal{G}_{k-1}  \Big]          \,       \Big|  \,\mathcal{G}_0   \Big] 
		\\&=\E\Big[     \E\Big[\mathbbm{1}^\Omega_{ A_k} \mathbbm{1}^\Omega_{(\cap_{n=0}^{k-1} A_n)}    \,   \big| \, \mathcal{G}_{k-1}  \Big]          \,       \Big|  \,\mathcal{G}_0   \Big] .
		\end{split}
		\end{equation}
		This and Item~\eqref{item0} assure for all $ k \in \{1,2,\dots,N\}$ that 
		\begin{equation}
		\begin{split}
		\P\Big( (\cap_{n=0}^{k} A_n ) \, \big| \, \sigma (Y_0)     \Big)&=
		\E\Big[     \E\Big[\mathbbm{1}^\Omega_{ A_k}    \,    \big|  \,\mathcal{G}_{k-1}  \Big]        \mathbbm{1}^\Omega_{(\cap_{n=0}^{k-1} A_n)}     \,        \Big| \, \mathcal{G}_0   \Big].
		\end{split}
		\end{equation}
		Combining this with \eqref{Annew} ensures for all $ k \in \{1,2,\dots,N\}$ that 
		\begin{align*}
		\label{ry}
		&\P\Big( (\cap_{n=0}^{k} A_n) \,  \big| \, \sigma (Y_0)     \Big)
		=\E\Big[     \E\Big[\mathbbm{1}^\Omega_{\{  \mathcal{V}(Y_k) \geq c   [\mathcal{V}(Y_{k-1})]^\alpha          \} \cap \{ Y_k \in \mathbb{H}_k\}}     \,\big| \, \mathcal{G}_{k-1}  \Big]        \mathbbm{1}^\Omega_{(\cap_{n=0}^{k-1} A_n)}        \,     \Big|  \, \mathcal{G}_0   \Big]
		\\&=\E\Big[  \P\Big(\big\{   \mathcal{V}(Y_k) \geq c  [\mathcal{V}(Y_{k-1})]^\alpha   \big\} \cap\big\{ Y_k \in \mathbb{H}_k\big\}  \,\big|\,  \mathcal{G}_{k-1}  \Big)    \,    \mathbbm{1}^\Omega_{(\cap_{n=0}^{k-1} A_n)}         \,    \Big| \, \mathcal{G}_0   \Big] \numberthis
		\\&=\E\Big[  \P\Big( \big\{   \mathcal{V}(\Phi(Y_{k-1},Z_k)) \geq c   [\mathcal{V}(Y_{k-1})]^\alpha          \big\} \cap \big\{ \Phi(Y_{k-1},Z_k) \in \mathbb{H}_k\big\}          \,   \big| \, \mathcal{G}_{k-1} \Big)    \,    \\&\qquad \quad  \cdot  \mathbbm{1}^\Omega_{(\cap_{n=0}^{k-1} A_n)}        \,     \Big| \, \mathcal{G}_0   \Big]. 
		\end{align*}
		Moreover, observe that \cite[Lemma~2.9]{JentzenPusnik2016} (with $(\Omega,\mathcal{F}, \P)=(\Omega,\mathcal{F}, \P)$, $(D,\mathcal{D})=(H,\mathcal{H})$, $(E,\mathcal{E})=(U,\mathcal{U})$,  $\mathcal{X}=\mathcal{G}_{k-1}$, $\mathcal{Y}=\sigma(Z_k)$, $X=Y_{k-1}$, $Y=Z_k$,  $\Phi=\mathbbm{1}^{H\times U}_{  \{  (v,u)\in H\times U\colon   \mathcal{V}(\Phi(v,u)) \geq c   [\mathcal{V}(v)]^\alpha        \} \cap \{ (v,u)\in H\times U\colon \Phi(v,u)\in \mathbb{H}_k  \} }$ for $k \in \{1,2,\dots,N\}$  in the notation of \cite[Lemma~2.9]{JentzenPusnik2016}) establishes for all $ k \in \{1,2,\dots,N\}$ that 
		\begin{equation}
		\begin{split}
		&\E\Big[  \P\Big( \big\{   \mathcal{V}(\Phi(Y_{k-1},Z_k)) \geq c   [\mathcal{V}(Y_{k-1})]^\alpha          \big\} \cap \big\{ \Phi(Y_{k-1},Z_k) \in \mathbb{H}_k\big\}          \,   \big| \, \mathcal{G}_{k-1} \Big)    \,    \\&\qquad \cdot \mathbbm{1}^\Omega_{(\cap_{n=0}^{k-1} A_n)}        \,     \Big| \, \mathcal{G}_0   \Big]
		=\E\Big[ p_k(Y_{k-1}) \, \mathbbm{1}^\Omega_{(\cap_{n=0}^{k-1} A_n)}        \,     \big| \, \mathcal{G}_0   \Big].
		\end{split}
		\end{equation}
		This and \eqref{ry} prove for all $ k \in \{1,2,\dots,N\}$ that 
		\begin{equation}
		\label{estprob}
		\begin{split}
		&\P\Big( (\cap_{n=0}^{k} A_n) \,  \big| \, \sigma (Y_0)     \Big)
		=	\E\Big[ p_k(Y_{k-1}) \, \mathbbm{1}^\Omega_{(\cap_{n=0}^{k-1} A_n)}        \,     \big| \, \mathcal{G}_0   \Big].
		\end{split}
		\end{equation}
		Furthermore, note that \eqref{Annew} implies  for all $k \in \{2,3,\dots,N\}$, $n \in \{0,1,\dots,k-2\}$, $ \omega \in (\cap_{l=0}^{k-1} A_l)$ that 
		\begin{equation}
		\mathcal{V}(Y_{n+1}(\omega)) \geq c[\mathcal{V}(Y_{n}(\omega))]^\alpha.
		\end{equation}
		This and  Lemma \ref{lemma0} (with $c=c$,  $\alpha=\alpha$, $N=k-1$, $e_n=\mathcal{V}(Y_n(\omega))$ for  $k \in \{2,3,\dots,N\}$, $n \in \{0,1,\dots,k-1\}$, $\omega \in (\cap_{l=0}^{k-1} A_l)$  in the notation of Lemma \ref{lemma0}) assure that it holds for all $k \in \{2,3,\dots,N\} $, $n \in \{0,1,\dots,k-1\}$,  $\omega \in (\cap_{l=0}^{k-1} A_l)$  that 
		\begin{equation}
		\mathcal{V}(Y_{n}(\omega))\geq c^{(  \sum_{l=0}^{n-1}   \alpha^l )} \,[\mathcal{V}(Y_0(\omega))]^{(\alpha^{n})}.
		\end{equation}
		Hence, we obtain  for all $k \in \{2,3,\dots,N\} $, $\omega \in (\cap_{l=0}^{k-1} A_l)$ that 
		\begin{equation}
		\label{w1}
		\mathcal{V}(Y_{k-1}(\omega))\geq c^{(  \sum_{l=0}^{k-2}   \alpha^l  )}  \,[\mathcal{V}(Y_0(\omega))]^{(\alpha^{(k-1)})}. 
		\end{equation}
		Next observe that 
		\begin{equation}
		\Big\{	\mathcal{V}(Y_{0})\geq c^{(  \sum_{l=0}^{-1}   \alpha^l  )}  \,[\mathcal{V}(Y_0)]^{(\alpha^{0})}\Big\}=\Big\{ 	\mathcal{V}(Y_{0})\geq \mathcal{V}(Y_{0})    \Big\}=\Omega.
		\end{equation}
		The fact that $A_0\subseteq \Omega$ therefore proves that 
		\begin{equation}
		A_0=(\cap_{n=0}^{0} A_n)\subseteq \Big\{	\mathcal{V}(Y_{0})\geq c^{(  \sum_{l=0}^{-1}   \alpha^l  )}  \,[\mathcal{V}(Y_0)]^{(\alpha^{0})}\Big\}.
		\end{equation}
		Combining this with \eqref{w1} implies for all $k \in \{1,2,\dots,N\}$ that 
		\begin{equation}
		\label{ynew}
		(\cap_{n=0}^{k-1} A_n) \subseteq \Big\{  \mathcal{V}(Y_{k-1})\geq c^{(  \sum_{l=0}^{k-2}   \alpha^l  )}  \,[\mathcal{V}(Y_0)]^{(\alpha^{(k-1)})} \Big\} . 
		\end{equation}
		Therefore, we obtain for all $k \in \{1,2,\dots,N\}  $ that  
		\begin{equation}
		\label{y1new}
		\mathbbm{1}^\Omega_{\{ \mathcal{V}(Y_{k-1})\geq c^{(\sum_{l=0}^{k-2} \alpha^l)}   \,   [\mathcal{V}(Y_0)]^{(\alpha^{(k-1)})}         \}}\, \mathbbm{1}^\Omega_{(\cap_{n=0}^{k-1} A_n)} = \mathbbm{1}^\Omega_{(\cap_{n=0}^{k-1} A_n)}.
		\end{equation}
		Combining this with \eqref{estprob} assures for all $k \in \{1,2,\dots,N\}$ that 
		\begin{equation}
		\begin{split}
		&\P\Big( (\cap_{n=0}^{k} A_n) \,  \big| \, \sigma (Y_0)     \Big)=\E\Big[ p_k(Y_{k-1}) \, \mathbbm{1}^\Omega_{(\cap_{n=0}^{k-1} A_n)}        \,     \big| \, \mathcal{G}_0   \Big]
		\\&=\E\bigg[ p_k(Y_{k-1}) \, \mathbbm{1}^\Omega_{\{ \mathcal{V}(Y_{k-1})\geq c^{(\sum_{l=0}^{k-2} \alpha^l)}   \,  [\mathcal{V}(Y_0)]^{(\alpha^{(k-1)})}         \}}\, \mathbbm{1}^\Omega_{(\cap_{n=0}^{k-1} A_n)}        \,     \Big| \, \mathcal{G}_0   \bigg].
		\end{split}
		\end{equation}
		This establishes Item~\eqref{item:P:equality}. It thus remains to prove Item~\eqref{item:last}. For this we note that \eqref{res1lemma7} implies that for all $k \in \{1,2,\dots,N\}$ it holds that 
		\begin{align*}
		\label{stima}
		&\P\Big(( \cap_{n=0}^{k} A_n) \,  \big| \, \sigma (Y_0)     \Big) \,\Big\llbracket \mathbbm{1}^\Omega_{\{ \mathcal{V}(Y_{0})\geq c^{\left(\nicefrac{1}{(1-\alpha)}\right)}  \theta      \}}\Big\rrbracket
		\\&= \E\bigg[ p_k(Y_{k-1}) \, \mathbbm{1}^\Omega_{\{ \mathcal{V}(Y_{k-1})\geq c^{(\sum_{l=0}^{k-2} \alpha^l)}   \,  [\mathcal{V}(Y_0)]^{(\alpha^{(k-1)})}         \}}\,
		\mathbbm{1}^\Omega_{(\cap_{n=0}^{k-1} A_n)}        \,     \Big| \, \mathcal{G}_0   \bigg] \Big\llbracket \mathbbm{1}^\Omega_{\{ \mathcal{V}(Y_{0})\geq c^{\left(\nicefrac{1}{(1-\alpha)}\right)}  \theta      \}}\Big\rrbracket
		\\&\geq\E\bigg[ p_k(Y_{k-1})  \mathbbm{1}^\Omega_{\{ Y_{k-1} \in \{ w \in \mathbb{H}_{k-1} \colon \mathcal{V}(w) \geq \theta^{(\alpha^{(k-1)})} \}     \}} \, \mathbbm{1}^\Omega_{\{ \mathcal{V}(Y_{k-1})\geq c^{(\sum_{l=0}^{k-2} \alpha^l)}\,[\mathcal{V}(Y_0)]^{(\alpha^{(k-1)})}         \}}\, 
		\\&\qquad \quad \cdot \mathbbm{1}^\Omega_{(\cap_{n=0}^{k-1} A_n)} 
		\mathbbm{1}^\Omega_{\{ \mathcal{V}(Y_{0})\geq c^{\left(\nicefrac{1}{(1-\alpha)}\right)}   \theta     \}}     \,     \Big| \, \mathcal{G}_0 \bigg]. \numberthis	
		\end{align*}
		Next observe that \eqref{Annew} assures for all $k \in \{1,2,\dots,N\}$, $\omega \in (\cap_{n=0}^{k-1}A_n)$  that 
		\begin{equation}
		\label{44}
		Y_{k-1}(\omega) \in \mathbb{H}_{k-1}.
		\end{equation}
		Moreover, note that for all $k \in \{2,3,\dots,N\}$, $\omega \in \{ \mathcal{V}(Y_{0})\geq c^{\left(\nicefrac{1}{(1-\alpha)}\right)} \theta  \} \cap\{ \mathcal{V}(Y_{k-1})\geq c^{(\sum_{l=0}^{k-2} \alpha^l)}\,[\mathcal{V}(Y_0)]^{(\alpha^{(k-1)})}         \} $  it holds that 
		\begin{equation}
		\label{w2}
		\begin{split}
		\mathcal{V}(Y_{k-1}(\omega))&\geq c^{(\sum_{l=0}^{k-2} \alpha^l)}  \, [\mathcal{V}(Y_0(\omega))]^{(\alpha^{(k-1)})}
		\\&=c^{\nicefrac{(\alpha^{(k-1)}-1)}{(\alpha-1)}} \, [\mathcal{V}(Y_0(\omega))]^{(\alpha^{(k-1)})}
		\\&\geq c^{\nicefrac{(\alpha^{(k-1)})}{(\alpha-1)}}\,c^{\nicefrac{(\alpha^{(k-1)})}{(1-\alpha)}} \,\theta^{(\alpha^{(k-1)})}=\theta^{(\alpha^{(k-1)})}.
		\end{split}
		\end{equation}
		Furthermore, observe that the hypothesis that $c \in (0,1] $ and the hypothesis that $\alpha \in (1,\infty)$ prove for all $\omega \in A_0 \cap \{ \mathcal{V}(Y_{0})\geq c^{\nicefrac{1}{(1-\alpha)}} \theta   \}$ that 
		\begin{equation}
		\mathcal{V}(Y_{0}(\omega))\geq c^{\nicefrac{1}{(1-\alpha)}} \theta  \geq \theta.
		\end{equation}
		This demonstrates that 
		\begin{equation}
		\begin{split}
		&A_0 \cap \{ \mathcal{V}(Y_{0})\geq c^{\nicefrac{1}{(1-\alpha)}} \theta   \}
		\subseteq \Big\{Y_{0}\in\Big\{ w \in \mathbb{H}_{0} \colon \mathcal{V}(w) \geq \theta^{(\alpha^{0})} \Big\}\Big\}.
		\end{split}
		\end{equation}
		Combining this with \eqref{44} and \eqref{w2} ensures for all $k \in \{1,2,\dots,N\}$  that 
		\begin{equation}
		\label{inclusione}
		\begin{split}
		&(\cap_{n=0}^{k-1}A_n) \cap \Big\{ \mathcal{V}(Y_{0})\geq c^{\nicefrac{1}{(1-\alpha)}}  \theta \Big\} \cap\Big\{ \mathcal{V}(Y_{k-1})\geq c^{(\sum_{l=0}^{k-2} \alpha^l)} \, [\mathcal{V}(Y_0)]^{(\alpha^{(k-1)})}         \Big\} 
		\\&\subseteq  \Big\{Y_{k-1}\in\Big\{ w \in \mathbb{H}_{k-1} \colon \mathcal{V}(w) \geq \theta^{(\alpha^{(k-1)})} \Big\}\Big\}.
		\end{split}
		\end{equation}
		This implies for all $k \in \{1,2,\dots,N\}$ that  
		\begin{equation}
		\label{indic}
		\begin{split}
		&\mathbbm{1}^\Omega_{\{ Y_{k-1} \in \{ w \in \mathbb{H}_{k-1} \colon \mathcal{V}(w) \geq \theta^{(\alpha^{(k-1)})} \}     \}} \, \mathbbm{1}^\Omega_{\{ \mathcal{V}(Y_{k-1})\geq c^{(\sum_{l=0}^{k-2} \alpha^l)}\,[\mathcal{V}(Y_0)]^{(\alpha^{(k-1)})}         \}}\, 
		\\&\quad \cdot  \mathbbm{1}^\Omega_{(\cap_{n=0}^{k-1} A_n)} 
		\mathbbm{1}^\Omega_{\{ \mathcal{V}(Y_{0})\geq c^{\nicefrac{1}{(1-\alpha)}}   \theta     \}} 
		\\&=\mathbbm{1}^\Omega_{\{ \mathcal{V}(Y_{k-1})\geq c^{(\sum_{l=0}^{k-2} \alpha^l)}\,[\mathcal{V}(Y_0)]^{(\alpha^{(k-1)})}         \}}\, 
		\mathbbm{1}^\Omega_{(\cap_{n=0}^{k-1} A_n)} 
		\mathbbm{1}^\Omega_{\{ \mathcal{V}(Y_{0})\geq c^{\nicefrac{1}{(1-\alpha)}}    \theta    \}}.
		\end{split}
		\end{equation}
		Combining this with \eqref{stima} assures for all $k \in \{1,2,\dots,N\}$ that 
		\begin{align*}
		&\P\Big( (\cap_{n=0}^{k} A_n) \,  \big| \, \sigma (Y_0)     \Big) \,\Big\llbracket \mathbbm{1}^\Omega_{\{ \mathcal{V}(Y_{0})\geq c^{\nicefrac{1}{(1-\alpha)}}   \theta     \}}\Big\rrbracket
		\\&\geq \E\bigg[    \inf \!\Big( \big\{ p_k(v) \colon (v \in \mathbb{H}_{k-1} \colon  \mathcal{V}(v)\geq   \theta^{(\alpha^{(k-1)})} )\big\} \cup \{1\} \Big)   \,\mathbbm{1}^\Omega_{\{ Y_{k-1} \in \{ w \in \mathbb{H}_{k-1}\, \colon \mathcal{V}(w) \geq \theta^{(\alpha^{(k-1)})} \}     \}} 
		\\&\qquad \quad \cdot \mathbbm{1}^\Omega_{(\cap_{n=0}^{k-1}A_n)} \, \mathbbm{1}^\Omega_{\{ \mathcal{V}(Y_{k-1})\geq c^{(\sum_{l=0}^{k-2} \alpha^l)} \, [\mathcal{V}(Y_0)]^{(\alpha^{(k-1)})}         \}}
		\,\mathbbm{1}^\Omega_{\{ \mathcal{V}(Y_{0})\geq c^{\nicefrac{1}{(1-\alpha)}}  \theta      \}}     \,     \Big| \, \mathcal{G}_0 \bigg] 
		\\&= \E\bigg[ \inf \!\Big( \big\{ p_k(v) \colon (v \in \mathbb{H}_{k-1} \colon  \mathcal{V}(v)\geq   \theta^{(\alpha^{(k-1)})} )\big\} \cup \{1\} \Big) \, \mathbbm{1}^\Omega_{(\cap_{n=0}^{k-1}A_n)} 
		\\&\qquad \quad \cdot \mathbbm{1}^\Omega_{\{ \mathcal{V}(Y_{k-1})\geq c^{(\sum_{l=0}^{k-2} \alpha^l)} \, [\mathcal{V}(Y_0)]^{(\alpha^{(k-1)})}         \}}\mathbbm{1}^\Omega_{\{ \mathcal{V}(Y_{0})\geq c^{\nicefrac{1}{(1-\alpha)}}  \theta      \}}     \,     \Big| \, \mathcal{G}_0 \bigg]. \numberthis
		\end{align*}
		This and \eqref{y1new} ensure for all $k \in \{1,2,\dots,N\}$ that 
		\begin{equation}
		\begin{split}
		&\P\Big( (\cap_{n=0}^{k} A_n) \,  \big| \, \sigma (Y_0)     \Big) \,\Big\llbracket \mathbbm{1}^\Omega_{\{ \mathcal{V}(Y_{0})\geq c^{\nicefrac{1}{(1-\alpha)}}   \theta     \}}\Big\rrbracket
		\\&\geq   \inf \!\Big( \big\{ p_k(v) \colon (v \in \mathbb{H}_{k-1} \colon  \mathcal{V}(v)\geq   \theta^{(\alpha^{(k-1)})} )\big\} \cup \{1\} \Big)
		\\& \quad \cdot \E\Big[\mathbbm{1}^\Omega_{(\cap_{n=0}^{k-1}A_n)} \,     \big| \, \mathcal{G}_0\Big]
		 \Big\llbracket\mathbbm{1}^\Omega_{ \{\mathcal{V}(Y_0)  \geq c^{\nicefrac{1}{(1-\alpha)}}  \theta  \}    }     \Big\rrbracket
		\\&=   \inf \!\Big( \big\{ p_k(v) \colon (v \in \mathbb{H}_{k-1} \colon  \mathcal{V}(v)\geq   \theta^{(\alpha^{(k-1)})} )\big\} \cup \{1\} \Big)  \\& \quad \cdot \P\Big(  (\cap_{n=0}^{k-1} A_n) \,  \big| \, \sigma (Y_0)\Big)
		\Big\llbracket\mathbbm{1}^\Omega_{ \{\mathcal{V}(Y_0)  \geq  c^{\nicefrac{1}{(1-\alpha)}} \theta  \}    }     \Big\rrbracket.
		\end{split}
		\end{equation}
		This establishes Item~\eqref{item:last}. The proof of Lemma \ref{lemmatwo} is thus completed.
	\end{proof}

	\begin{lemma}
		\label{lemmathree}
		Assume  Setting~\ref{sec2new}
		and let $p_n \colon H \to [0,1]$, $n  \in \{1,2,\dots,N\}$, be the functions  which satisfy for all $n  \in \{1,2,\dots,N\}$, $v \in H$ that 
		\begin{equation}
		p_n(v)=\P\Big( \big\{   \mathcal{V}(\Phi(v,Z_n)) \geq c   [\mathcal{V}(v)]^\alpha          \big\} \cap \big\{ \Phi(v,Z_n) \in \mathbb{H}_n\big\} \Big). 
		\end{equation}
		Then 
		\begin{equation}
		\label{res1lemma8}
		\begin{split}
		&\P\Big(\cap_{n=0}^N   \Big[ \Big\{  \mathcal{V}(Y_n)\geq c^{(\sum_{k=0}^{n-1}   \alpha^k) } \,  [\mathcal{V}(Y_0)]^{(\alpha^n)} \Big\}   \cap \Big\{   Y_n \in \mathbb{H}_n  \Big\} \Big] \,\Big| \, \sigma(Y_0)   \Big) 
		\\&\quad \cdot\Big\llbracket\mathbbm{1}^\Omega_{\{ \mathcal{V}(Y_0) \geq  c^{\nicefrac{1}{(1-\alpha)}}  \theta   \}}\Big\rrbracket
		\\&\geq \bigg[ \textstyle \prod\limits_{n=1}^N \inf \!\Big( \big\{ p_n(v) \colon (v \in \mathbb{H}_{n-1} \colon  \mathcal{V}(v)\geq   \theta^{(\alpha^{(n-1)})} )\big\} \cup \{1\} \Big)  \bigg]\\
		& \quad \cdot \Big\llbracket\mathbbm{1}^\Omega_{\{ \mathcal{V}(Y_0) \geq c^{\nicefrac{1}{(1-\alpha)}}  \theta   \} \cap\{  Y_0 \in \mathbb{H}_0   \}}\Big\rrbracket.  
		\end{split}
		\end{equation}
	\end{lemma}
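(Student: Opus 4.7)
My plan for Lemma~\ref{lemmathree} is to combine the one-step recursive inequality from Lemma~\ref{lemmatwo}~\eqref{item:last} with the reverse Gronwall bound from Lemma~\ref{lemma0}. Informally: Lemma~\ref{lemmatwo}~\eqref{item:last} peels off one factor of $\inf\{p_{n}(v):\ldots\}$ at a time from $\P(\cap_{k=0}^{n} A_{k} \mid \sigma(Y_{0}))$, while Lemma~\ref{lemma0} converts the iterative bound $\mathcal{V}(Y_{n}) \geq c\,[\mathcal{V}(Y_{n-1})]^{\alpha}$ built into the sets $A_{n}$ of Lemma~\ref{lemmatwo} into the closed-form bound $\mathcal{V}(Y_{n}) \geq c^{(\sum_{k=0}^{n-1}\alpha^{k})}[\mathcal{V}(Y_{0})]^{(\alpha^{n})}$ that appears on the left-hand side of \eqref{res1lemma8}.

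For the first step I would fix $\omega \in \cap_{k=0}^{N} A_{k}$, with the sets $A_{n}$ defined exactly as in Lemma~\ref{lemmatwo}, and apply Lemma~\ref{lemma0} to $e_{n} := \mathcal{V}(Y_{n}(\omega))$, $n \in \{0,1,\dots,N\}$. The definition of $A_{n}$ for $n \in \{1,\dots,N\}$ gives $e_{n} \geq c\,e_{n-1}^{\alpha}$, so Lemma~\ref{lemma0} yields
\[
\mathcal{V}(Y_{n}(\omega)) \geq c^{(\sum_{k=0}^{n-1}\alpha^{k})}\,[\mathcal{V}(Y_{0}(\omega))]^{(\alpha^{n})}
\]
for all $n \in \{0,1,\dots,N\}$, while $Y_{n}(\omega) \in \mathbb{H}_{n}$ is built into the $A_{n}$. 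This establishes the inclusion
\[
\cap_{k=0}^{N} A_{k} \subseteq \cap_{n=0}^{N}\Big(\big\{\mathcal{V}(Y_{n}) \geq c^{(\sum_{k=0}^{n-1}\alpha^{k})}[\mathcal{V}(Y_{0})]^{(\alpha^{n})}\big\} \cap \big\{Y_{n} \in \mathbb{H}_{n}\big\}\Big),
\]
so the conditional probability on the left-hand side of \eqref{res1lemma8} dominates $\P(\cap_{k=0}^{N} A_{k} \,|\, \sigma(Y_{0}))$ almost surely.

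For the second step, I would abbreviate $I_{n} := \inf\!\bigl(\{p_{n}(v) : v \in \mathbb{H}_{n-1},\, \mathcal{V}(v) \geq \theta^{(\alpha^{n-1})}\} \cup \{1\}\bigr)$ for $n \in \{1,\dots,N\}$ and perform a finite induction, at each step invoking Lemma~\ref{lemmatwo}~\eqref{item:last} and reintroducing the $\sigma(Y_{0})$-measurable cutoff indicator $\mathbbm{1}^{\Omega}_{\{\mathcal{V}(Y_{0}) \geq c^{(1/(1-\alpha))}\theta\}}$. This yields
\[
\P\big(\cap_{k=0}^{N} A_{k} \,\big|\, \sigma(Y_{0})\big)\Big\llbracket\mathbbm{1}^{\Omega}_{\{\mathcal{V}(Y_{0}) \geq c^{(1/(1-\alpha))}\theta\}}\Big\rrbracket \geq \Big(\textstyle\prod_{n=1}^{N} I_{n}\Big)\, \P\big(A_{0} \,\big|\, \sigma(Y_{0})\big)\Big\llbracket\mathbbm{1}^{\Omega}_{\{\mathcal{V}(Y_{0}) \geq c^{(1/(1-\alpha))}\theta\}}\Big\rrbracket.
\]
Because $A_{0} = \{Y_{0} \in \mathbb{H}_{0}\} \in \sigma(Y_{0})$, the factor $\P(A_{0}\,|\,\sigma(Y_{0}))$ coincides almost surely with $\mathbbm{1}^{\Omega}_{\{Y_{0} \in \mathbb{H}_{0}\}}$, and multiplying it with the cutoff indicator produces $\mathbbm{1}^{\Omega}_{\{\mathcal{V}(Y_{0}) \geq c^{(1/(1-\alpha))}\theta\} \cap \{Y_{0} \in \mathbb{H}_{0}\}}$. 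Combining this with the inclusion from the first step gives precisely \eqref{res1lemma8}.

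The main obstacle I expect is purely notational: propagating the equivalence-class brackets $\llbracket \cdot \rrbracket$ and the cutoff indicator consistently through the induction, and handling the "$\cup\{1\}$" in the definition of $I_{n}$ so that empty-infimum cases remain well-defined (in particular so that no issues arise at the indices $n$ for which $\{v \in \mathbb{H}_{n-1} : \mathcal{V}(v) \geq \theta^{(\alpha^{n-1})}\}$ is empty). Conceptually, no new probabilistic ingredient beyond Lemmas~\ref{lemma0} and~\ref{lemmatwo} is needed; the proof is a straightforward telescoping.
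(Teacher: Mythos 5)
Your proposal is correct and follows essentially the same route as the paper: the inclusion $\cap_{k=0}^{N}A_{k}\subseteq\cap_{n=0}^{N}(\{\mathcal{V}(Y_{n})\geq c^{(\sum_{k=0}^{n-1}\alpha^{k})}[\mathcal{V}(Y_{0})]^{(\alpha^{n})}\}\cap\{Y_{n}\in\mathbb{H}_{n}\})$ obtained from Lemma~\ref{lemma0}, followed by iterating Item~\eqref{item:last} of Lemma~\ref{lemmatwo} down to $\P(A_{0}\,|\,\sigma(Y_{0}))$ and identifying this with $\mathbbm{1}^{\Omega}_{\{Y_{0}\in\mathbb{H}_{0}\}}$ since $A_{0}\in\sigma(Y_{0})$. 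No gaps.
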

	\begin{proof}[Proof of Lemma \ref{lemmathree} ]
		Throughout this proof let $A_n \subseteq \Omega$, $n \in \{0,1,\dots,N\}$, be the sets which satisfy for all $n \in \{1,2,\dots,N\}$ that $A_0=\big\{  Y_0 \in \mathbb{H}_0        \big\}$ and 
		\begin{equation}
		\label{An2}
		A_{n}= \big\{  \mathcal{V}(Y_{n}) \geq c [\mathcal{V}(Y_{n-1})]^\alpha   \big\} \cap \big\{  Y_{n} \in \mathbb{H}_{n}        \big\}.
		\end{equation}
		Note that for all $k \in \{1,2,\dots,N\}$, $n \in \{0,1,\dots,$ $k-1\}$,  $\omega \in (\cap_{l=1}^{k} \{  \mathcal{V}(Y_{l}) \geq c [\mathcal{V}(Y_{l-1})]^\alpha   \})$  it holds that
		\begin{equation}
		\mathcal{V}(Y_{n+1}(\omega))\geq c[\mathcal{V}(Y_n(\omega))]^\alpha.
		\end{equation}
		This and  Lemma \ref{lemma0} (with $c=c$, $\alpha=\alpha$, $N=k$, $e_n=\mathcal{V}(Y_n(\omega))$ for  $k \in \{1,2,\dots,N\}$, $n \in \{0,1,\dots,k\}$, $\omega \in (\cap_{l=1}^{k} \{  \mathcal{V}(Y_{l}) \geq c [\mathcal{V}(Y_{l-1})]^\alpha   \})$  in the notation of Lemma \ref{lemma0}) ensure for all $k \in \{1,2,\dots,N\} $, $n \in \{0,1,\dots,k\}$,  $\omega \in (\cap_{l=1}^{k} \{  \mathcal{V}(Y_{l}) \geq c [\mathcal{V}(Y_{l-1})]^\alpha  \})$   that 
		\begin{equation}
		\mathcal{V}(Y_{n}(\omega))\geq c^{(  \sum_{l=0}^{n-1}   \alpha^l )} \,[\mathcal{V}(Y_0(\omega))]^{(\alpha^{n})}.
		\end{equation}
		Hence, we obtain for all $k \in \{1,2,\dots,N\}$ that
		\begin{equation}
		\label{nr}
		\big(\cap_{n=1}^k \big\{  \mathcal{V}(Y_{n}) \geq c [\mathcal{V}(Y_{n-1})]^\alpha   \big\}\big) \subseteq \Big(\cap_{n=0}^{k}\Big\{  \mathcal{V}(Y_{n})\geq c^{(  \sum_{l=0}^{n-1}   \alpha^l  )}  \,[\mathcal{V}(Y_0)]^{(\alpha^{n})} \Big\}\Big).
		\end{equation}
		Moreover, observe that \eqref{An2} establishes for all $k \in \{1,2,\dots,N\}$ that
		\begin{equation}
		\begin{split}
		(\cap_{n=0}^k A_n) &=A_0 \cap (\cap_{n=1}^k A_n)
		\\&=\big\{  Y_{0} \in \mathbb{H}_{0}        \big\}\cap \Big(\cap_{n=1}^k \big(\big\{  \mathcal{V}(Y_{n}) \geq c [\mathcal{V}(Y_{n-1})]^\alpha   \big\} \cap \big\{  Y_{n} \in \mathbb{H}_{n}        \big\}\big) \Big)
		\\&=\big(\cap_{n=1}^k \big\{  \mathcal{V}(Y_{n}) \geq c [\mathcal{V}(Y_{n-1})]^\alpha   \big\}\big) \cap \big(\cap_{n=0}^k \big\{  Y_{n} \in \mathbb{H}_{n}        \big\}\big).
		\end{split}
		\end{equation}
		This and \eqref{nr} imply for all $k \in \{1,2,\dots,N\}$ that 
		\begin{equation}
		\label{Ancont}
		\begin{split}
		(\cap_{n=0}^k A_n) &\subseteq\Big(\cap_{n=0}^{k}\Big\{  \mathcal{V}(Y_{n})\geq c^{(  \sum_{l=0}^{n-1}   \alpha^l  )}  \,[\mathcal{V}(Y_0)]^{(\alpha^{n})} \Big\}\cap \big(\cap_{n=0}^k\big\{  Y_n \in \mathbb{H}_n   \big\}\big)\Big)
		\\&= \cap_{n=0}^{k}\Big(\Big\{  \mathcal{V}(Y_{n})\geq c^{(  \sum_{l=0}^{n-1}   \alpha^l  )}  \,[\mathcal{V}(Y_0)]^{(\alpha^{n})} \Big\}\cap \Big\{  Y_n \in \mathbb{H}_n   \Big\}\Big).
		\end{split}
		\end{equation}
		Hence, we obtain that 
		\begin{equation}
		\label{48}
		\begin{split}
		&\P\Big(\cap_{n=0}^N   \Big[ \Big\{  \mathcal{V}(Y_n)\geq c^{(\sum_{k=0}^{n-1}   \alpha^k) } \,  [\mathcal{V}(Y_0)]^{(\alpha^n)} \Big\}   \cap \Big\{   Y_n \in \mathbb{H}_n  \Big\} \Big] \,\Big| \, \sigma(Y_0)   \Big) 
		\\&\quad \cdot\Big\llbracket\mathbbm{1}^\Omega_{\{ \mathcal{V}(Y_0) \geq  c^{\nicefrac{1}{(1-\alpha)}}  \theta   \}}\Big\rrbracket
		\\&\geq \P\Big( (\cap_{n=0}^N A_n)  \,\big| \, \sigma(Y_0)     \Big)\Big\llbracket\mathbbm{1}^\Omega_{\{ \mathcal{V}(Y_0) \geq  c^{\nicefrac{1}{(1-\alpha)}}  \theta   \}}\Big\rrbracket.
		\end{split}
		\end{equation}
		Next note that Item~\eqref{item:last} in Lemma \ref{lemmatwo} and induction establish that 
		\begin{align}
		\begin{split}
		&\P\Big( (\cap_{n=0}^N A_n)  \,\big| \, \sigma(Y_0)     \Big)\Big\llbracket\mathbbm{1}^\Omega_{\{ \mathcal{V}(Y_0) \geq  c^{\nicefrac{1}{(1-\alpha)}}  \theta   \}}\Big\rrbracket 
		\\& \geq  \inf \!\Big( \big\{ p_N(v) \colon (v \in \mathbb{H}_{N-1} \colon  \mathcal{V}(v)\geq   \theta^{(\alpha^{(N-1)})} )\big\} \cup \{1\} \Big)\\
		& \quad \cdot \P\Big(   (\cap_{n=0}^{N-1}  A_n )\, \big| \, \sigma (Y_0) \Big)
		\Big\llbracket\mathbbm{1}^\Omega_{ \{\mathcal{V}(Y_0)  \geq  c^{\nicefrac{1}{(1-\alpha)}} \theta  \}    }     \Big\rrbracket
		\\&\geq \bigg[ \textstyle \prod\limits_{n=1}^N \inf \!\Big( \big\{ p_n(v) \colon (v \in \mathbb{H}_{n-1} \colon  \mathcal{V}(v)\geq   \theta^{(\alpha^{(n-1)})} )\big\} \cup \{1\} \Big)  \bigg] \\
		& \quad \cdot \P\Big(A_0 \,\big| \sigma(Y_0)\,\Big) 
		 \Big\llbracket\mathbbm{1}^\Omega_{ \{\mathcal{V}(Y_0)  \geq  c^{\nicefrac{1}{(1-\alpha)}} \theta  \}    }     \Big\rrbracket. 
		 \end{split}
		\end{align}
		This ensures that 
		\begin{align}
\begin{split}
		&\P\Big( (\cap_{n=0}^N A_n)  \,\big| \, \sigma(Y_0)     \Big)\Big\llbracket\mathbbm{1}^\Omega_{\{ \mathcal{V}(Y_0) \geq  c^{\nicefrac{1}{(1-\alpha)}}\theta     \}}\Big\rrbracket 
		\\&\geq \bigg[ \textstyle \prod\limits_{n=1}^N \inf \!\Big( \big\{ p_n(v) \colon (v \in \mathbb{H}_{n-1} \colon  \mathcal{V}(v)\geq   \theta^{(\alpha^{(n-1)})} )\big\} \cup \{1\} \Big) \bigg] \\
		& \quad \cdot \P\Big(\big\{ Y_0 \in \mathbb{H}_0\big\}  \,\big|\, \sigma(Y_0)\Big) \Big\llbracket\mathbbm{1}^\Omega_{ \{\mathcal{V}(Y_0)  \geq  c^{\nicefrac{1}{(1-\alpha)}}  \theta \}    }     \Big\rrbracket
		\\&= \bigg[ \textstyle \prod\limits_{n=1}^N \inf \!\Big( \big\{ p_n(v) \colon (v \in \mathbb{H}_{n-1} \colon  \mathcal{V}(v)\geq   \theta^{(\alpha^{(n-1)})} )\big\} \cup \{1\} \Big) \bigg] \\
		& \quad \cdot \E\Big[ \mathbbm{1}^\Omega_{\{ Y_0 \in \mathbb{H}_0\} } \,\big|\, \sigma(Y_0)\Big]\Big\llbracket\mathbbm{1}^\Omega_{ \{\mathcal{V}(Y_0)  \geq  c^{\nicefrac{1}{(1-\alpha)}} \theta  \}    }     \Big\rrbracket
		\\&= \bigg[ \textstyle \prod\limits_{n=1}^N \inf \!\Big( \big\{ p_n(v) \colon (v \in \mathbb{H}_{n-1} \colon  \mathcal{V}(v)\geq   \theta^{(\alpha^{(n-1)})} )\big\} \cup \{1\} \Big) \bigg]\\ & \quad \cdot \Big\llbracket\mathbbm{1}^\Omega_{\{ \mathcal{V}(Y_0) \geq  c^{\nicefrac{1}{(1-\alpha)}}    \theta\} \cap\{  Y_0 \in \mathbb{H}_0   \}}\Big\rrbracket.   
		\end{split}
		\end{align}
		Combining this with \eqref{48}  establishes \eqref{res1lemma8}.
		The proof of Lemma \ref{lemmathree} is thus completed.
	\end{proof}
	\subsection{Reverse a priori bounds}
	\label{reverse}
	\begin{lemma}
		\label{lemmaone}
		Assume  Setting~\ref{sec2new}. Then 
		\begin{align*}
		\label{resfirstlemma}
		&\E \big[   \mathcal{V}(Y_N)     \big] 
		\geq c^{(  \sum_{k=0}^{N-1}   \alpha^k  )} \numberthis
		\\&\cdot \E \Big[    [\mathcal{V}(Y_0)]^{(\alpha^N)} \P\Big(\cap_{n=0}^N\Big[\Big\{\mathcal{V}(Y_n)\geq  c^{(  \sum_{k=0}^{n-1}   \alpha^k  )}    \,[\mathcal{V}(Y_0)]^{(\alpha^n)}\Big\} 
		\cap \big \{Y_n \in \mathbb{H}_n\big\} \Big]\,\Big| \, \sigma(Y_0)\Big)\Big] .
	   \end{align*}
	\end{lemma}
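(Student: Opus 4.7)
My plan is to multiply $\mathcal{V}(Y_N)$ by the indicator of the event
$$B = \bigcap_{n=0}^{N}\Big[\big\{\mathcal{V}(Y_n) \geq c^{(\sum_{k=0}^{n-1}\alpha^k)}[\mathcal{V}(Y_0)]^{(\alpha^n)}\big\} \cap \{Y_n \in \mathbb{H}_n\}\Big]$$
that appears inside the conditional probability on the right-hand side, and then unwind the resulting expectation with the tower property. The $n = N$ slice of $B$ immediately forces $\mathcal{V}(Y_N) \geq c^{(\sum_{k=0}^{N-1}\alpha^k)}[\mathcal{V}(Y_0)]^{(\alpha^N)}$ on $B$, and since $\mathcal{V}$ is nonnegative by Setting \ref{sec2new} I obtain the pointwise lower bound
$$\mathcal{V}(Y_N) \geq \mathcal{V}(Y_N)\,\mathbbm{1}^\Omega_B \geq c^{(\sum_{k=0}^{N-1}\alpha^k)}[\mathcal{V}(Y_0)]^{(\alpha^N)}\,\mathbbm{1}^\Omega_B.$$

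Taking expectations and then conditioning on $\sigma(Y_0)$, I would pull the $\sigma(Y_0)$-measurable factor $[\mathcal{V}(Y_0)]^{(\alpha^N)}$ out of the inner conditional expectation, which yields
$$\E[\mathcal{V}(Y_N)] \geq c^{(\sum_{k=0}^{N-1}\alpha^k)}\,\E\!\Big[[\mathcal{V}(Y_0)]^{(\alpha^N)}\,\P(B\mid\sigma(Y_0))\Big],$$
exactly the claimed bound. The only bookkeeping step is that $B$ is $\mathcal{F}$-measurable, which is immediate from Setting \ref{sec2new}: each $Y_n$ is $\mathcal{F}/\mathcal{H}$-measurable, $\mathcal{V}$ is $\mathcal{H}/\mathcal{B}([0,\infty))$-measurable, and every $\mathbb{H}_n$ lies in $\mathcal{H}$, so each set in the intersection defining $B$ is in $\mathcal{F}$.

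I do not anticipate any genuine obstacle in this proof; the lemma is essentially the trivial inequality $\E[X] \geq \E[X\mathbbm{1}_B]$ combined with the tower property, the only ingenuity being the choice of $B$ so that on $B$ the random variable $X = \mathcal{V}(Y_N)$ is dominated from below by a $\sigma(Y_0)$-measurable quantity. This reduces the entire argument to one application of the tower property and the "take out what is known" rule for conditional expectations, so the proof should be only a few lines once $B$ has been introduced.
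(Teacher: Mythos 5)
Your argument is correct and is essentially the paper's proof: both rest on the trivial bound $\E[\mathcal{V}(Y_N)]\geq\E[\mathcal{V}(Y_N)\mathbbm{1}_A]$, the pointwise lower bound by the $\sigma(Y_0)$-measurable quantity $c^{(\sum_{k=0}^{N-1}\alpha^k)}[\mathcal{V}(Y_0)]^{(\alpha^N)}$, the tower property, and pulling out the $\sigma(Y_0)$-measurable factor. The only (immaterial) difference is ordering: the paper restricts to the single event $\{\mathcal{V}(Y_N)\geq c^{(\sum_{k=0}^{N-1}\alpha^k)}[\mathcal{V}(Y_0)]^{(\alpha^N)}\}$ and passes to the full intersection at the end by monotonicity of conditional probability, whereas you work with the intersection from the outset.
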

	\begin{proof}[Proof of Lemma \ref{lemmaone}]
		First, note that the tower property for conditional expectations implies that
		\begin{equation}
		\begin{split}
		&\E \big[   \mathcal{V}(Y_N)     \big] \geq \E \bigg[ \mathcal{V}(Y_N) \,\mathbbm{1}^\Omega_{\{  \mathcal{V}(Y_N)\geq c^{(  \sum_{k=0}^{N-1}   \alpha^k  )}    \, [\mathcal{V}(Y_0)]^{(\alpha^N)} \}   } \bigg] 
		\\&\geq \E \bigg[  c^{(  \sum_{k=0}^{N-1}   \alpha^k  )}   \,[\mathcal{V}(Y_0)]^{(\alpha^N)}\,\mathbbm{1}^\Omega_{\{  \mathcal{V}(Y_N)\geq c^{(  \sum_{k=0}^{N-1}   \alpha^k  )}     \,[\mathcal{V}(Y_0)]^{(\alpha^N)} \}   }  \bigg]
		\\&=\E \bigg[ \E \bigg[ c^{(  \sum_{k=0}^{N-1}   \alpha^k  )}  \,[\mathcal{V}(Y_0)]^{(\alpha^N)}\,\mathbbm{1}^\Omega_{\{  \mathcal{V}(Y_N)\geq c^{(  \sum_{k=0}^{N-1}   \alpha^k  )}  \,[\mathcal{V}(Y_0)]^{(\alpha^N)} \}   } \, \Big| \, \sigma(Y_0) \bigg] \bigg]
		\\&=c^{(  \sum_{k=0}^{N-1}   \alpha^k  )}   \,\E \bigg[   [\mathcal{V}(Y_0)]^{(\alpha^N)}\,\E \bigg[\mathbbm{1}^\Omega_{\{  \mathcal{V}(Y_N)\geq c^{(  \sum_{k=0}^{N-1}   \alpha^k  )}   \,[\mathcal{V}(Y_0)]^{(\alpha^N)} \}   } \,\Big|\,  \sigma(Y_0)\bigg] \bigg].
		\end{split}
		\end{equation}
		Hence, we obtain that
		\begin{align*}
		&\E \big[   \mathcal{V}(Y_N)     \big] 
		\\&\geq c^{(  \sum_{k=0}^{N-1}   \alpha^k  )} \,\E \Big[    [\mathcal{V}(Y_0)]^{(\alpha^N)}
		\, \P\Big(\Big\{\mathcal{V}(Y_N)\geq  c^{(  \sum_{k=0}^{N-1}   \alpha^k  )}  \,[\mathcal{V}(Y_0)]^{(\alpha^N)}\Big\}\,\Big| \, \sigma(Y_0)\Big) \Big] \numberthis
		\\&
		\geq c^{(  \sum_{k=0}^{N-1}   \alpha^k  )} \,
		\\&\cdot \E \Big[    [\mathcal{V}(Y_0)]^{(\alpha^N)} \, \P\Big(\cap_{n=0}^N\Big[\Big\{\mathcal{V}(Y_n)\geq  c^{(  \sum_{k=0}^{n-1}   \alpha^k  )}   \, [\mathcal{V}(Y_0)]^{(\alpha^n)}\Big\} \cap \big \{Y_n \in \mathbb{H}_n\big\} \Big]\,\Big| \, \sigma(Y_0)\Big)\Big] .
		\end{align*}
		The proof of Lemma \ref{lemmaone} is thus completed. 
	\end{proof}
	\begin{prop}
		\label{theorem9}
		Assume Setting~\ref{sec2new}. Then 
		\begin{align}
		\begin{split}
		\E\big[    \mathcal{V}(Y_N)  \big]    
		&\geq \theta^{(\alpha^N)}\,\P\big(  \big\{  Y_0\in \mathbb{H}_0    \big\}  \cap  \big\{   \mathcal{V}(Y_0) \geq  c^{\nicefrac{1}{(1-\alpha)}} \theta \big\}  \big) 
		\\& \quad \cdot \bigg[ \textstyle \prod\limits_{n=1}^N
		\inf\!\bigg( \Big\{ \P\Big( \big\{  \mathcal{V}(\Phi(v,Z_1))\geq c     [\mathcal{V}(v)]^{\alpha} \big\}   \cap \big\{   \Phi(v,Z_1) \in \mathbb{H}_n  \big\}   \Big) \\
		& \qquad  \qquad \qquad \quad \colon  \big(v \in \mathbb{H}_{n-1}  \colon \mathcal{V}(v) \geq \theta^{(\alpha^{(n-1)})}\big)\Big\} \cup \{1\}\bigg) \bigg].
		\end{split}
		\end{align}
	\end{prop}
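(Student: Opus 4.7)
The strategy is to combine Lemma~\ref{lemmaone}, which provides a reverse a priori bound on $\E[\mathcal{V}(Y_N)]$ in terms of the conditional probability that the iterates stay above the deterministic trajectory $c^{(\sum_{k=0}^{n-1}\alpha^k)}[\mathcal{V}(Y_0)]^{(\alpha^n)}$ while remaining in the sets $\mathbb{H}_n$, with Lemma~\ref{lemmathree}, which lower-bounds precisely this conditional probability by the deterministic product of one-step survival probabilities on the favourable event $E := \{Y_0 \in \mathbb{H}_0\} \cap \{\mathcal{V}(Y_0) \geq c^{(1/(1-\alpha))}\theta\}$. The proof is then essentially an exponent-bookkeeping exercise.

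The first step is to restrict the bound from Lemma~\ref{lemmaone} to $E$: since all factors inside the expectation of \eqref{resfirstlemma} are nonnegative, I would multiply the integrand by $\mathbbm{1}^\Omega_E$, which preserves the inequality. On $E$ one has the pointwise bound $[\mathcal{V}(Y_0)]^{(\alpha^N)} \geq c^{(\alpha^N/(1-\alpha))}\theta^{(\alpha^N)}$, and this deterministic lower bound may be pulled out of the expectation.

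Next, I would invoke Lemma~\ref{lemmathree} to replace the conditional probability in the integrand by the deterministic product $\prod_{n=1}^N \inf(\{p_n(v) : v \in \mathbb{H}_{n-1},\ \mathcal{V}(v) \geq \theta^{(\alpha^{(n-1)})}\} \cup \{1\})$. This replacement is legitimate because \eqref{res1lemma8} is an almost-sure inequality after multiplication by $\mathbbm{1}^\Omega_{\{\mathcal{V}(Y_0) \geq c^{(1/(1-\alpha))}\theta\}}$, and this indicator dominates nothing new once we have already restricted to $E$. Factoring the deterministic product out of the expectation then leaves only $\E[\mathbbm{1}^\Omega_E] = \P(E)$ inside.

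The final step is to collect the powers of $c$: the prefactor $c^{(\sum_{k=0}^{N-1}\alpha^k)} = c^{((\alpha^N - 1)/(\alpha-1))}$ from Lemma~\ref{lemmaone} combined with $c^{(\alpha^N/(1-\alpha))}$ from the bound on $[\mathcal{V}(Y_0)]^{(\alpha^N)}$ telescopes to $c^{(-1/(\alpha-1))} = c^{(1/(1-\alpha))}$, which is $\geq 1$ since $c \in (0,1]$ and $\alpha > 1$, and can therefore be dropped while retaining a lower bound. What remains is exactly $\theta^{(\alpha^N)} \cdot \P(E) \cdot \prod_{n=1}^N \inf(\ldots)$, which is the claimed inequality. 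The only subtlety worth flagging is the careful handling of the equivalence-class notation $\llbracket \cdot \rrbracket$ used in \eqref{res1lemma8}, but since any representative may be substituted inside the outer expectation, this causes no genuine difficulty; I do not anticipate any substantive obstacle in the argument.
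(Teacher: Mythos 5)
Your proposal is correct and follows essentially the same route as the paper's proof: Lemma~\ref{lemmaone} for the reverse a priori bound, restriction to the favourable event, Lemma~\ref{lemmathree} for the conditional probability, and bookkeeping of the powers of $c$ (the paper cancels the $c$-exponents exactly after first bounding $c^{(\sum_{k=0}^{N-1}\alpha^k)}\geq c^{(\nicefrac{\alpha^N}{(\alpha-1)})}$, while you keep the leftover factor $c^{\nicefrac{1}{(1-\alpha)}}\geq 1$ and drop it, which is equivalent). The only detail left implicit is that the $p_n$ of Lemma~\ref{lemmathree} are defined via $Z_n$ and must be identified with the $Z_1$-probabilities in the statement using that $Z_1,\dots,Z_N$ are identically distributed, a trivial step the paper spells out.
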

	\begin{proof}[Proof of Proposition \ref{theorem9} ]
		First, note that Lemma \ref{lemmaone} ensures that
		\begin{equation}
		\begin{split}
		&\E \big[   \mathcal{V}(Y_N)     \big] 
		\geq c^{(  \sum_{k=0}^{N-1}   \alpha^k  )} \,\E\Big[    [\mathcal{V}(Y_0)]^{(\alpha^N)}
		\\&\quad \cdot \P\Big(\cap_{n=0}^N\Big[\Big\{\mathcal{V}(Y_n)\geq  c^{(  \sum_{k=0}^{n-1}   \alpha^k  )}  \,  [\mathcal{V}(Y_0)]^{(\alpha^n)}\Big\} \cap \big \{Y_n \in \mathbb{H}_n\big\} \Big]\,\Big| \, \sigma(Y_0)\Big)\Big] 
		\\&\geq c^{(\frac{\alpha^N-1}{\alpha-1})}\, \E \Big[    [\mathcal{V}(Y_0)]^{(\alpha^N)}\, \mathbbm{1}^\Omega_{\{\mathcal{V}(Y_0)\geq c^{\nicefrac{1}{(1-\alpha)}}  \theta  \}}
		\\&\quad \cdot \P\Big(\cap_{n=0}^N\Big[\Big\{\mathcal{V}(Y_n)\geq  c^{(  \sum_{k=0}^{n-1}   \alpha^k  )}  \,  [\mathcal{V}(Y_0)]^{(\alpha^n)}\Big\} \cap \big \{Y_n \in \mathbb{H}_n\big\} \Big]\,\Big| \, \sigma(Y_0)\Big)\Big].
		\end{split}
		\end{equation}
		The assumption that $c \in (0,1]$, the fact that $\alpha >1$, and the fact that $\alpha^N >1$ therefore imply that 
		\begin{equation}
		\begin{split}
		&\E \big[   \mathcal{V}(Y_N)     \big] 
		\geq c^{(\nicefrac{\alpha^N}{(\alpha-1)})}\, \E \Big[    \theta^{(\alpha^N)}c^{(\nicefrac{\alpha^N}{(1-\alpha)})} \, \mathbbm{1}^\Omega_{\{\mathcal{V}(Y_0)\geq  c^{\nicefrac{1}{(1-\alpha)}}  \theta \}}
		\\&\quad \cdot \P\Big(\cap_{n=0}^N\Big[\Big\{\mathcal{V}(Y_n)\geq  c^{(  \sum_{k=0}^{n-1}   \alpha^k  )} \,   [\mathcal{V}(Y_0)]^{(\alpha^n)}\Big\} \cap \big \{Y_n \in \mathbb{H}_n\big\} \Big]\,\Big| \, \sigma(Y_0)\Big)\Big].
		\end{split}
		\end{equation}
		This assures that 
		\begin{equation}
		\label{corr1}
		\begin{split}
		&\E \big[   \mathcal{V}(Y_N)     \big] 
		\geq \theta^{(\alpha^N)}\,c^{(\nicefrac{\alpha^N}{(1-\alpha)})}\,c^{(\nicefrac{\alpha^N}{(\alpha-1)})} \,\E \Big[     \mathbbm{1}^\Omega_{\{\mathcal{V}(Y_0)\geq  c^{\nicefrac{1}{(1-\alpha)}} \theta  \}}
		\\&\quad \cdot \P\Big(\cap_{n=0}^N\Big[\Big\{\mathcal{V}(Y_n)\geq  c^{(  \sum_{k=0}^{n-1}   \alpha^k  )}  \,  [\mathcal{V}(Y_0)]^{(\alpha^n)}\Big\} \cap \big \{Y_n \in \mathbb{H}_n\big\} \Big]\,\Big| \, \sigma(Y_0)\Big)\Big]
		\\&=\theta^{(\alpha^N)}\,\E \Big[     \mathbbm{1}^\Omega_{\{\mathcal{V}(Y_0)\geq  c^{\nicefrac{1}{(1-\alpha)}} \theta  \}}
		\\&\quad \cdot \P\Big(\cap_{n=0}^N\Big[\Big\{\mathcal{V}(Y_n)\geq  c^{(  \sum_{k=0}^{n-1}   \alpha^k  )}  \,  [\mathcal{V}(Y_0)]^{(\alpha^n)}\Big\} \cap \big \{Y_n \in \mathbb{H}_n\big\} \Big]\,\Big| \, \sigma(Y_0)\Big)\Big].
		\end{split}
		\end{equation}
		Moreover, note that Lemma \ref{lemmathree} ensures that 
		\begin{equation}
		\begin{split}
		&\P\Big(\cap_{n=0}^N\Big[\Big\{\mathcal{V}(Y_n)\geq  c^{(  \sum_{k=0}^{n-1}   \alpha^k  )}    \,[\mathcal{V}(Y_0)]^{(\alpha^n)}\Big\} \cap \big \{Y_n \in \mathbb{H}_n\big\} \Big]\,\Big| \, \sigma(Y_0)\Big)
		\\&\quad \cdot \Big\llbracket\mathbbm{1}^\Omega_{\{ \mathcal{V}(Y_0) \geq c^{\nicefrac{1}{(1-\alpha)}}\theta \}   }\Big\rrbracket
		\geq \Big\llbracket\mathbbm{1}^\Omega_{\{ \mathcal{V}(Y_0) \geq  c^{\nicefrac{1}{(1-\alpha)}}\theta\} \cap\{  Y_0 \in \mathbb{H}_0   \}}\Big\rrbracket
		\\&\quad \cdot\bigg[ \textstyle \prod\limits_{n=1}^N 	\inf\!\bigg( \Big\{ \P\Big( \big\{  \mathcal{V}(\Phi(v,Z_n))\geq c     [\mathcal{V}(v)]^{\alpha} \big\}   \cap \big\{   \Phi(v,Z_n) \in \mathbb{H}_n  \big\}   \Big) \\
		& \qquad  \qquad \qquad \quad \colon  \big(v \in \mathbb{H}_{n-1}  \colon \mathcal{V}(v) \geq \theta^{(\alpha^{(n-1)})}\big)\Big\} \cup \{1\}\bigg) \bigg].
		\end{split}
		\end{equation}
		Furthermore, observe that the fact that $Z_1,Z_2,\dots,Z_N$ are identically distributed random variables implies that 
		\begin{equation}
		\begin{split}
		&\P\Big(\cap_{n=0}^N\Big[\Big\{\mathcal{V}(Y_n)\geq  c^{(  \sum_{k=0}^{n-1}   \alpha^k  )}    \,[\mathcal{V}(Y_0)]^{(\alpha^n)}\Big\} \cap \big \{Y_n \in \mathbb{H}_n\big\} \Big]\,\Big| \, \sigma(Y_0)\Big)
		\\&\quad \cdot \Big\llbracket\mathbbm{1}^\Omega_{\{ \mathcal{V}(Y_0) \geq c^{\nicefrac{1}{(1-\alpha)}}\theta \}   }\Big\rrbracket
		\geq \Big\llbracket\mathbbm{1}^\Omega_{\{ \mathcal{V}(Y_0) \geq  c^{\nicefrac{1}{(1-\alpha)}}\theta\} \cap\{  Y_0 \in \mathbb{H}_0   \}}\Big\rrbracket
		\\&\quad \cdot\bigg[ \textstyle \prod\limits_{n=1}^N 		\inf\!\bigg( \Big\{ \P\Big( \big\{  \mathcal{V}(\Phi(v,Z_1))\geq c     [\mathcal{V}(v)]^{\alpha} \big\}   \cap \big\{   \Phi(v,Z_1) \in \mathbb{H}_n  \big\}   \Big) \\
		& \qquad  \qquad \qquad \quad \colon  \big(v \in \mathbb{H}_{n-1}  \colon \mathcal{V}(v) \geq \theta^{(\alpha^{(n-1)})}\big)\Big\} \cup \{1\}\bigg) \bigg].
		\end{split}
		\end{equation}
		Combining this with \eqref{corr1} proves that 
		\begin{align}
		\begin{split}
		\E \big[   \mathcal{V}(Y_N)     \big]  &\geq \theta^{(\alpha^N)}\, \P\Big( \Big\{ \mathcal{V}(Y_0) \geq  c^{\nicefrac{1}{(1-\alpha)}}\theta\Big\} \cap\big\{  Y_0 \in \mathbb{H}_0   \big\} \Big)
		\\& \quad \cdot\bigg[ \textstyle \prod\limits_{n=1}^N 	\inf\!\bigg( \Big\{ \P\Big( \big\{  \mathcal{V}(\Phi(v,Z_1))\geq c     [\mathcal{V}(v)]^{\alpha} \big\}   \cap \big\{   \Phi(v,Z_1) \in \mathbb{H}_n  \big\}   \Big) \\
		& \qquad \qquad \qquad \quad \colon  \big(v \in \mathbb{H}_{n-1}  \colon \mathcal{V}(v) \geq \theta^{(\alpha^{(n-1)})}\big)\Big\} \cup \{1\}\bigg) \bigg].
		\end{split}
		\end{align}
		The proof of Proposition \ref{theorem9} is thus completed.
	\end{proof}
	\begin{cor}
		\label{cor6primo}
		Let $(H,\mathcal{H})$ and $(U,\mathcal{U})$ be measurable spaces, let $\Phi\colon H\times U \to H$ be an $(\mathcal{H}\otimes \mathcal{U})$/$\mathcal{H}$-measurable function,  let $(\Omega,\mathcal{F},\P)$ be a probability space, let $\mathbb{H}\in\mathcal{H}$, $N \in \N$, $c \in (0,1]$, $\alpha, \theta \in (1,\infty)$, let $Z_1,Z_2,\dots,Z_N \colon \Omega \to U$ be i.i.d.\ random variables, let $Y_0,Y_1,\dots,Y_N \colon \Omega \to H$ be random variables  which satisfy for all $ n \in \{1,2,\dots,N\}$ that $ Y_n=\Phi(Y_{n-1},Z_n)$, assume that  $\sigma(Y_0)$ and $\sigma(Z_1,Z_2,\dots, Z_N)$ are independent  on $(\Omega,\mathcal{F},\P)$, and let $\mathcal{V} \colon H \to [0,\infty)$ be an $\mathcal{H}$/$\mathcal{B}([0,\infty))$-measurable function.
		Then 
		\begin{align}
\begin{split}
\E\big[    \mathcal{V}(Y_N)  \big]    
&\geq \theta^{(\alpha^N)}\,\P\big(  \big\{  Y_0\in \mathbb{H}   \big\}  \cap  \big\{   \mathcal{V}(Y_0) \geq  c^{\nicefrac{1}{(1-\alpha)}} \theta \big\}  \big) 
\\& \quad \cdot \bigg[ \textstyle \prod\limits_{n=1}^N
\inf\!\bigg( \Big\{ \P\Big( \big\{  \mathcal{V}(\Phi(v,Z_1))\geq c     [\mathcal{V}(v)]^{\alpha} \big\}   \cap \big\{   \Phi(v,Z_1) \in \mathbb{H}  \big\}   \Big) \\
& \qquad  \qquad \qquad \quad \colon  \big(v \in \mathbb{H}  \colon \mathcal{V}(v) \geq \theta^{(\alpha^{(n-1)})}\big)\Big\} \cup \{1\}\bigg) \bigg].
\end{split}
\end{align}
	\end{cor}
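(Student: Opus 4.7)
The plan is to recognize that Corollary \ref{cor6primo} is the specialization of Proposition \ref{theorem9} in which all the measurable sets $\mathbb{H}_0, \mathbb{H}_1, \ldots, \mathbb{H}_N$ coincide with one common set $\mathbb{H}$. Accordingly, the entire proof will consist of checking that Setting \ref{sec2new} is satisfied under this choice and then invoking Proposition \ref{theorem9}.

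Concretely, I would first introduce auxiliary sets $\mathbb{H}_n := \mathbb{H}$ for every $n \in \{0,1,\dots,N\}$. Since $\mathbb{H} \in \mathcal{H}$ by hypothesis, this gives $\mathbb{H}_0,\mathbb{H}_1,\dots,\mathbb{H}_N \in \mathcal{H}$ as required by Setting \ref{sec2new}. All the remaining objects $(H,\mathcal{H})$, $(U,\mathcal{U})$, $\Phi$, $(\Omega,\mathcal{F},\P)$, $N$, $c$, $\alpha$, $\theta$, $Z_1,\dots,Z_N$, $Y_0,\dots,Y_N$, and $\mathcal{V}$ are taken verbatim from the hypotheses of Corollary \ref{cor6primo}, and the two crucial structural assumptions --- the recursion $Y_n=\Phi(Y_{n-1},Z_n)$ and the independence of $\sigma(Y_0)$ from $\sigma(Z_1,\dots,Z_N)$ --- are explicitly postulated in the corollary. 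Hence Setting \ref{sec2new} is in force.

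Applying Proposition \ref{theorem9} with this choice then yields
\begin{align*}
\E\big[\mathcal{V}(Y_N)\big]
&\geq \theta^{(\alpha^N)}\,\P\big(\{Y_0\in\mathbb{H}_0\}\cap\{\mathcal{V}(Y_0)\geq c^{\nicefrac{1}{(1-\alpha)}}\theta\}\big)\\
&\quad\cdot\bigg[\textstyle\prod\limits_{n=1}^N \inf\!\bigg(\Big\{\P\Big(\{\mathcal{V}(\Phi(v,Z_1))\geq c[\mathcal{V}(v)]^\alpha\}\cap\{\Phi(v,Z_1)\in\mathbb{H}_n\}\Big)\\
&\qquad\qquad\qquad\quad\colon\big(v\in\mathbb{H}_{n-1}\colon \mathcal{V}(v)\geq\theta^{(\alpha^{(n-1)})}\big)\Big\}\cup\{1\}\bigg)\bigg].
\end{align*}
Substituting $\mathbb{H}_0 = \mathbb{H}_1 = \cdots = \mathbb{H}_N = \mathbb{H}$ on the right-hand side reproduces the inequality claimed in Corollary \ref{cor6primo} verbatim, which completes the argument.

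There is no real obstacle here: the proof is essentially a bookkeeping exercise, and the only content is to verify that Setting \ref{sec2new} is applicable with the uniform choice of the auxiliary sets. The mild subtlety --- but not really a difficulty --- is simply to confirm that the measurability and independence hypotheses of Setting \ref{sec2new} match those of the corollary, and that the lower bound of Proposition \ref{theorem9} collapses to the displayed form when the $\mathbb{H}_n$'s are all equal.
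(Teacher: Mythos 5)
Your proposal matches the paper's own proof: Corollary \ref{cor6primo} is obtained there by applying Proposition \ref{theorem9} with $\mathbb{H}_n=\mathbb{H}$ for all $n\in\{0,1,\dots,N\}$ and observing that the hypotheses of the corollary supply exactly the data required by Setting \ref{sec2new}. Your argument is correct and essentially identical to the paper's.
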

	\begin{proof}[Proof of Corollary \ref{cor6primo}]
		First, note that Proposition \ref{theorem9} (with $(H,\mathcal{H})$ = $(H,\mathcal{H})$, $(U,\mathcal{U})=(U,\mathcal{U})$, $\Phi=\Phi$, $(\Omega,\mathcal{F},\P)$ = $(\Omega,\mathcal{F},\P)$, $N=N$, $c=c$, $\alpha=\alpha$, $\theta=\theta$, $\mathbb{H}_n=\mathbb{H}$ for $n \in \{0,1,\dots,N\}$, $Z_n=Z_n$ for $n \in \{1,2,\dots,N\}$, $Y_n=Y_n$ for $n \in \{0,1,\dots,N\}$, $\mathcal{V}=\mathcal{V}$ in the notation of Proposition \ref{theorem9}) ensures that 
		\begin{align}
\begin{split}
\E\big[    \mathcal{V}(Y_N)  \big]    
&\geq \theta^{(\alpha^N)}\,\P\big(  \big\{  Y_0\in \mathbb{H}   \big\}  \cap  \big\{   \mathcal{V}(Y_0) \geq  c^{\nicefrac{1}{(1-\alpha)}} \theta \big\}  \big) 
\\& \quad \cdot \bigg[ \textstyle \prod\limits_{n=1}^N
\inf\!\bigg( \Big\{ \P\Big( \big\{  \mathcal{V}(\Phi(v,Z_1))\geq c     [\mathcal{V}(v)]^{\alpha} \big\}   \cap \big\{   \Phi(v,Z_1) \in \mathbb{H}  \big\}   \Big) \\
& \qquad  \qquad \qquad \quad \colon  \big(v \in \mathbb{H}  \colon \mathcal{V}(v) \geq \theta^{(\alpha^{(n-1)})}\big)\Big\} \cup \{1\}\bigg) \bigg].
\end{split}
\end{align}
		The proof of Corollary \ref{cor6primo} is thus completed.
	\end{proof}
	\begin{cor}
		\label{cor6}
		Let $(H,\mathcal{H})$ and $(U,\mathcal{U})$ be measurable spaces, let $\Phi\colon H\times U \to H$ be an $(\mathcal{H}\otimes \mathcal{U})$/$\mathcal{H}$-measurable function,  let $(\Omega,\mathcal{F},\P)$ be a probability space, let $N \in \N$, $c \in (0,1]$, $\alpha, \theta \in (1,\infty)$, let $Z_1,Z_2,\dots,Z_N \colon \Omega \to U$ be i.i.d.\ random variables, let $Y_0,Y_1,\dots,Y_N \colon \Omega \to H$ be random variables  which satisfy for all $ n \in \{1,2,\dots,N\}$ that $ Y_n=\Phi(Y_{n-1},Z_n)$, assume that  $\sigma(Y_0)$ and $\sigma(Z_1,Z_2,\dots, Z_N)$ are independent  on $(\Omega,\mathcal{F},\P)$, and let $\mathcal{V} \colon H \to [0,\infty)$ be an $\mathcal{H}$/$\mathcal{B}([0,\infty))$-measurable function.
		Then 
		\begin{align*}
		\label{riscor6}
		&\E\big[    \mathcal{V}(Y_N)  \big]    
		\geq \theta^{(\alpha^N)}\,\P\big(   \mathcal{V}(Y_0) \geq  c^{\nicefrac{1}{(1-\alpha)}} \theta   \big) \numberthis
		\\& \quad \cdot \bigg[ \textstyle \prod\limits_{n=1}^N
		\inf\!\bigg( \Big\{ \P\Big(  \mathcal{V}(\Phi(v,Z_1))\geq c     [\mathcal{V}(v)]^{\alpha}      \Big)  \colon  \big(v \in H \colon \mathcal{V}(v) \geq \theta^{(\alpha^{(n-1)})}\big)\Big\} \cup \{1\}\bigg) \bigg].
		\end{align*}
	\end{cor}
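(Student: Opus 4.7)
The plan is to obtain Corollary \ref{cor6} as a direct specialization of Corollary \ref{cor6primo}. The statement of Corollary \ref{cor6} is literally the statement of Corollary \ref{cor6primo} with $\mathbb{H}$ replaced by the full measurable space $H$, so the only thing to do is to verify that this specialization is legitimate and that the intersection terms involving $\mathbb{H}$ collapse to the expressions appearing in \eqref{riscor6}.

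First, I would invoke Corollary \ref{cor6primo} with the particular choice $\mathbb{H}=H$. This is admissible because $H\in\mathcal{H}$ by the definition of a measurable space, so the hypothesis $\mathbb{H}\in\mathcal{H}$ of Corollary \ref{cor6primo} is satisfied. All other data (namely $(H,\mathcal{H})$, $(U,\mathcal{U})$, $\Phi$, $(\Omega,\mathcal{F},\P)$, $N$, $c$, $\alpha$, $\theta$, $Z_1,\ldots,Z_N$, $Y_0,\ldots,Y_N$, and $\mathcal{V}$) are passed through unchanged.

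Next I would simplify the resulting bound. Since $\{Y_0\in H\}=\Omega$, we have
\begin{equation}
\P\bigl(\{Y_0\in H\}\cap\{\mathcal{V}(Y_0)\geq c^{\nicefrac{1}{(1-\alpha)}}\theta\}\bigr)=\P\bigl(\mathcal{V}(Y_0)\geq c^{\nicefrac{1}{(1-\alpha)}}\theta\bigr),
\end{equation}
which is the prefactor on the right-hand side of \eqref{riscor6}. Similarly, for every $v\in H$ one has $\{\Phi(v,Z_1)\in H\}=\Omega$, so
\begin{equation}
\P\bigl(\{\mathcal{V}(\Phi(v,Z_1))\geq c[\mathcal{V}(v)]^{\alpha}\}\cap\{\Phi(v,Z_1)\in H\}\bigr)=\P\bigl(\mathcal{V}(\Phi(v,Z_1))\geq c[\mathcal{V}(v)]^{\alpha}\bigr),
\end{equation}
and the index set $\{v\in H\colon\mathcal{V}(v)\geq\theta^{(\alpha^{(n-1)})}\}$ coincides with the one appearing under the infimum in \eqref{riscor6}. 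Substituting these identities into the bound provided by Corollary \ref{cor6primo} yields \eqref{riscor6} verbatim, which completes the proof.

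There is essentially no obstacle: the argument is a notational reduction, and no genuinely new estimate is required beyond what is already furnished by Corollary \ref{cor6primo} (and ultimately by Proposition \ref{theorem9}). The only small point to be careful about is to make the measurability observation $H\in\mathcal{H}$ explicit so that Corollary \ref{cor6primo} may indeed be applied with $\mathbb{H}=H$.
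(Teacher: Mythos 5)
Your proposal is correct and coincides with the paper's own proof: the paper likewise applies Corollary \ref{cor6primo} with $\mathbb{H}=H$ and then removes the redundant events using $Y_0(\Omega)\subseteq H$ and $\Phi(H\times U)\subseteq H$. Your explicit remark that $H\in\mathcal{H}$ justifies the choice $\mathbb{H}=H$ is a harmless (and slightly more careful) addition.
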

	\begin{proof}[Proof of Corollary \ref{cor6}]
		First, note that Corollary \ref{cor6primo} (with $(H,\mathcal{H})$ = $(H,\mathcal{H})$, $(U,\mathcal{U})=(U,\mathcal{U})$,  $\Phi=\Phi$, $(\Omega,\mathcal{F},\P)$ = $(\Omega,\mathcal{F},\P)$, $\mathbb{H}=H$, $N=N$, $c=c$, $\alpha=\alpha$, $\theta=\theta$, $Z_n=Z_n$ for $n \in \{1,2,\dots,N\}$, $Y_n=Y_n$ for $n \in \{0,1,\dots,N\}$, $\mathcal{V}=\mathcal{V}$ in the notation of Corollary \ref{cor6primo}) ensures that 
		\begin{align}
		\label{evento2}
		\begin{split}
		\E\big[    \mathcal{V}(Y_N)  \big]    
		&\geq \theta^{(\alpha^N)}\,\P\big(  \big\{  Y_0\in H   \big\}  \cap  \big\{   \mathcal{V}(Y_0) \geq  c^{\nicefrac{1}{(1-\alpha)}} \theta \big\}  \big) 
		\\& \quad \cdot \bigg[ \textstyle \prod\limits_{n=1}^N
		\inf\!\bigg( \Big\{ \P\Big( \big\{  \mathcal{V}(\Phi(v,Z_1))\geq c     [\mathcal{V}(v)]^{\alpha} \big\}   \cap \big\{   \Phi(v,Z_1) \in H  \big\}   \Big) \\
		& \qquad  \qquad \qquad \quad \colon  \big(v \in H \colon \mathcal{V}(v) \geq \theta^{(\alpha^{(n-1)})}\big)\Big\} \cup \{1\}\bigg) \bigg].
		\end{split}
		\end{align}
		Moreover, observe that the fact that  $ Y_0(\Omega) \subseteq H$ implies that  
		\begin{equation}
		\label{evento1}
		\big\{  Y_0\in H    \big\}  \cap  \big\{   \mathcal{V}(Y_0) \geq  c^{\nicefrac{1}{(1-\alpha)}} \theta \big\}  =  \big\{   \mathcal{V}(Y_0) \geq  c^{\nicefrac{1}{(1-\alpha)}} \theta \big\}.
		\end{equation}
		In addition, note that the fact that  $ \Phi(H\times U)\subseteq  H$ assures  that for all $v \in H$  it holds that 
		\begin{equation}
		\big\{  \mathcal{V}(\Phi(v,Z_1))\geq c     [\mathcal{V}(v)]^{\alpha} \big\}   \cap \big\{   \Phi(v,Z_1) \in H  \big\} 
		= \big\{  \mathcal{V}(\Phi(v,Z_1))\geq c     [\mathcal{V}(v)]^{\alpha} \big\}  .
		\end{equation}
		Combining this with \eqref{evento1} and \eqref{evento2} establishes \eqref{riscor6}. The proof of Corollary \ref{cor6} is thus completed.
	\end{proof}
	\section[Divergence results for   Euler-type approximation schemes]{Divergence results for   Euler-type approximation schemes for SPDEs with superlinearly growing nonlinearities}
	\label{sec3}
	
	Throughout this section the following setting is frequently used.
	
	\begin{setting}
		\label{concrete}
	Let $\lambda \colon \mathcal{B}((0,1)) \to [0,\infty]$ be the Lebesgue-Borel measure on $(0,1)$,  let $(H, \left\|\cdot\right\|_H, \left<\cdot,\cdot\right>_H)=
	(L^2(\lambda; \R), $ $ \norm{\cdot}_{L^2(\lambda; \R)}, \left<\cdot,\cdot\right>_{L^2(\lambda; \R)})$,
	let $e_n \in H$, $n \in \Z$, satisfy for all $n \in \N$ that
	$e_0(\cdot)=1$, 
	$e_n(\cdot)=\sqrt{2}\cos(2n\pi (\cdot))$,
	and  $e_{-n}(\cdot)=\sqrt{2}\sin(2n\pi (\cdot))$,
	let  $A\colon D(A) \subseteq H \to H$ be the linear operator which satisfies  that
	\begin{equation}
	D(A)= \bigg\{ v \in H \colon \sum_{n\in \Z}   n^4\left| \left< e_n, v \right>_H \right|^2 < \infty \bigg\}
	\end{equation}
	and 
	\begin{equation}
	\forall \, v \in D(A)\colon \quad Av= \sum_{n\in\Z} -4\pi^2n^2   \left< e_n, v \right>_H e_n,
	\end{equation}
	let $\eta \in (0,\infty)$, let $(H_r, \left\| \cdot \right\|_{H_r}, \left< \cdot, \cdot \right>_{H_r} )$, $r \in \R$, be a family of interpolation spaces  associated to $\eta - A$, and let $P_N \in L(H_{-1},H_{1})$, $N \in \N$, be the linear operators which satisfy for all $N \in \N$,  $v \in H$ that $P_N (v) = \sum_{n=-N}^{N}\left<e_n, v\right>_H  e_n$.
	\end{setting}

		 \subsection{A continuous embedding based on the Sobolev embedding theorem}
		 \label{sec31}

		 The next elementary and well-known result, Lemma~\ref{C} below, presents a special case of the Sobolev embedding theorem. Lemma~\ref{C} is used in our proof of Proposition \ref{poliG} in Section~\ref{sec33} below. For completeness we also include in this article the short proof of Lemma~\ref{C}.
		\begin{lemma}
			\label{C}
			Assume Setting~\ref{concrete} and let $p \in [2,\infty)$,  $\chi \in [\nicefrac{1}{4}-\nicefrac{1}{(2p)},\infty)$.
			Then it holds that $H_{\chi} \subseteq L^p(\lambda; \R)$ and 
			\begin{equation}
			\label{eq:C}
			0<\sup_{v \in H_\chi \backslash\{0\} }   \frac{\|v\|_{L^p(\lambda; \R)}}{\| v\|_{H_\chi}}<\infty.
			\end{equation}
		\end{lemma}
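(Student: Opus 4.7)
The plan is to prove the lemma by first reducing to the endpoint case $\chi_0 := \nicefrac{1}{4} - \nicefrac{1}{(2p)}$ and then invoking the one-dimensional periodic Sobolev embedding at this critical regularity. Since $(H_r)_{r \in \R}$ is the interpolation scale associated to the strictly positive operator $\eta - A$, the continuous embedding $H_{\chi} \hookrightarrow H_{\chi_0}$ holds for every $\chi \geq \chi_0$. Hence it suffices to establish the embedding for $\chi = \chi_0$, as then $H_\chi \hookrightarrow H_{\chi_0} \hookrightarrow L^p(\lambda; \R)$ follows by composition.

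For the endpoint case I would exploit the spectral/Fourier characterization of $H_{\chi_0}$. The functions $e_n$, $n \in \Z$, form an orthonormal basis of $H$ of eigenfunctions of $\eta - A$ with eigenvalues $\eta + 4\pi^2 n^2$, so
\begin{equation*}
\|v\|_{H_{\chi_0}}^2 = \sum_{n \in \Z}(\eta + 4\pi^2 n^2)^{2\chi_0}\,|\langle e_n, v\rangle_H|^2,
\end{equation*}
which identifies $H_{\chi_0}$ with the $2\chi_0$-th order periodic Sobolev space on $(0,1)$. For $p = 2$ we have $\chi_0 = 0$ and the claim is trivial since $H_0 = L^2(\lambda; \R)$. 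For $p \in (2,\infty)$ the claim reduces to the one-dimensional periodic Sobolev embedding $H^{\nicefrac{1}{2} - \nicefrac{1}{p}}_{\mathrm{per}}(0,1) \hookrightarrow L^p(\lambda; \R)$. I would derive this by combining the Hausdorff--Young inequality $\|v\|_{L^p(\lambda;\R)} \leq \|(\langle e_n, v\rangle_H)_{n \in \Z}\|_{\ell^{p'}}$, where $p' = \nicefrac{p}{(p-1)}$, with a Hölder estimate on the Fourier side against the weight $((\eta + 4\pi^2 n^2)^{-\chi_0})_{n \in \Z}$ to bound $\|\hat{v}\|_{\ell^{p'}}$ in terms of $\|v\|_{H_{\chi_0}}$.

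Once the finiteness of the supremum has been established, strict positivity is immediate by testing against any fixed nonzero element, for instance $v = e_0$, which satisfies $\|e_0\|_{L^p(\lambda;\R)} = 1 > 0$ and $\|e_0\|_{H_{\chi}} = \eta^{\chi}$. The main technical obstacle will be the endpoint itself: the Hausdorff--Young plus Hölder strategy sketched above produces a logarithmically divergent weight sum precisely at $\chi = \chi_0$, so at the critical exponent one must either exploit the $\ell^2$ improvement on the Fourier side via a Littlewood--Paley dyadic decomposition or appeal to the sharper Besov embedding $B^{2\chi_0}_{2,2}(0,1) \hookrightarrow B^0_{p,2}(0,1) \hookrightarrow L^p(\lambda;\R)$ (valid because $p \geq 2$), which gives the critical endpoint embedding directly.
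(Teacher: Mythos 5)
Your proposal is correct and, at its core, follows the same route as the paper: the paper likewise obtains strict positivity by testing against $e_0$ (giving the lower bound $\eta^{-\chi}$) and then, after noting that $\chi \geq \nicefrac{1}{4}-\nicefrac{1}{(2p)}$ gives $2\chi \geq \max\{\nicefrac{1}{2}-\nicefrac{1}{p},0\}$, simply cites the Sobolev embedding theorem for the finiteness of the supremum. Your additional scaffolding (reduction to the endpoint $\chi_0$, the Fourier identification of $H_{\chi_0}$ as a periodic Sobolev space, and the Besov/Littlewood--Paley treatment of the critical exponent) is a correct and more explicit account of what the paper delegates to that cited theorem, which does cover the endpoint case since $p<\infty$.
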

		\begin{proof}[Proof of Lemma \ref{C}]
			Throughout this proof let $C \in [0,\infty]$ be the extended real number which satisfies that 
			\begin{equation}
			C=\sup_{v \in (L^p(\lambda; \R)\cap H_\chi)\backslash\{0\} }   \frac{\|v\|_{L^p(\lambda; \R)}}{\| v\|_{H_\chi}}.
			\end{equation}
			Note that 
			\begin{equation}
			\label{eq:C:0}
			C\geq \frac{\|e_0\|_{L^p(\lambda; \R)}}{\| e_0\|_{H_\chi}}= \frac{1}{|\eta|^\chi}>0 .
			\end{equation}
			Next observe that the hypothesis that $\chi \geq \nicefrac{1}{4}-\nicefrac{1}{(2p)}$ ensures that 
			\begin{equation}
			2\chi \geq \max\{ \nicefrac{1}{2}-\nicefrac{1}{p},0 \}.
			\end{equation}
			Combining this with the Sobolev embedding theorem 
			implies that  $H_{\chi} \subseteq L^p(\lambda; \R)$ and 
			\begin{equation}
			C=\sup_{v \in  H_\chi \backslash\{0\} }   \frac{\|v\|_{L^p(\lambda; \R)}}{\| v\|_{H_\chi}} < \infty. 
			\end{equation}
			This and \eqref{eq:C:0} establish \eqref{eq:C}. 
			The proof of Lemma \ref{C} is thus completed.
		\end{proof}
		\subsection{Lower bounds for the probabilities of certain rare events}
		\label{sec32}
		
		 In the next result, Lemma~\ref{stimaprob1} below, we establish an elementary property for certain normally distributed random variables.  
		We use Lemma~\ref{stimaprob1} to establish in Lemma~\ref{stima1} below lower bounds
		for the probabilities of certain rare events. We refer to the statement and the proof of Lemma~\ref{stima1} below for more details. 
		
	\begin{lemma}
		\label{stimaprob1}
		Let $(\Omega,\mathcal{F},\mathbb{P})$ be a probability space, let $c \in \R$, $T, \varepsilon \in (0,\infty)$, $N \in \N$, and let $Y\colon \Omega \to \R$ be a  normalkly distributed random variable with mean 0 and variance $\nicefrac{T}{N}$.
		Then 
		\begin{equation}
		\P\big(  |c-Y|\leq \varepsilon   \big) \geq  \tfrac{\varepsilon}{\sqrt{2 \pi T}} \,\exp\!\left(-\tfrac{N(c^2+\varepsilon^2)}{T}\right).
		\end{equation}
	\end{lemma}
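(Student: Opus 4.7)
The plan is to bound the probability from below by writing it as an integral against the Gaussian density and then bounding the integrand by its minimum value on the integration interval.

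First, I would recall that $Y$ has density $x \mapsto \sqrt{N/(2\pi T)}\,\exp(-Nx^2/(2T))$ with respect to Lebesgue measure on $\mathbb{R}$. This gives
\begin{equation*}
\P(|c-Y|\leq \varepsilon) \;=\; \int_{c-\varepsilon}^{c+\varepsilon} \sqrt{\tfrac{N}{2\pi T}}\,\exp\!\bigl(-\tfrac{N x^2}{2T}\bigr)\,dx.
\end{equation*}
Since $x \mapsto \exp(-Nx^2/(2T))$ is decreasing in $|x|$, its minimum over the compact interval $[c-\varepsilon,c+\varepsilon]$ is attained at whichever endpoint is farther from the origin, namely at the point with absolute value $|c|+\varepsilon$. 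Hence the integrand is pointwise at least $\sqrt{N/(2\pi T)}\,\exp(-N(|c|+\varepsilon)^2/(2T))$, and the interval has length $2\varepsilon$, which yields
\begin{equation*}
\P(|c-Y|\leq \varepsilon) \;\geq\; 2\varepsilon\,\sqrt{\tfrac{N}{2\pi T}}\,\exp\!\bigl(-\tfrac{N(|c|+\varepsilon)^2}{2T}\bigr).
\end{equation*}

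Next, I would apply the elementary AM--GM-type inequality $2|c|\varepsilon \leq c^2+\varepsilon^2$ to deduce $(|c|+\varepsilon)^2 \leq 2(c^2+\varepsilon^2)$, so that the exponential factor satisfies
\begin{equation*}
\exp\!\bigl(-\tfrac{N(|c|+\varepsilon)^2}{2T}\bigr) \;\geq\; \exp\!\bigl(-\tfrac{N(c^2+\varepsilon^2)}{T}\bigr).
\end{equation*}
Finally, I would use $N \in \mathbb{N}$, so $2\sqrt{N} \geq 1$, to simplify the prefactor via $2\varepsilon \sqrt{N/(2\pi T)} \geq \varepsilon/\sqrt{2\pi T}$, which combines with the previous display to give the claimed lower bound.

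I do not foresee any genuine obstacle: the argument is essentially a one-step lower Riemann estimate together with the quadratic inequality $(|c|+\varepsilon)^2 \leq 2(c^2+\varepsilon^2)$. The only mildly nontrivial choice is to bound $(|c|+\varepsilon)^2$ by $2(c^2+\varepsilon^2)$ rather than keeping the sharper exponent $(|c|+\varepsilon)^2/2$, which is done precisely so that the right-hand side depends only on $c^2$ (and not on $|c|$), matching the form stated in the lemma.
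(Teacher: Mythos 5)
Your proposal is correct and follows essentially the same route as the paper: bound the Gaussian density from below on $[c-\varepsilon,c+\varepsilon]$ by its value at the endpoint farthest from the origin, note that the resulting exponent is at most $\tfrac{N(c^2+\varepsilon^2)}{T}$ (your AM--GM step $(|c|+\varepsilon)^2\le 2(c^2+\varepsilon^2)$ is the same estimate the paper obtains via $\max\{a,b\}\le a+b$), and absorb the factor $2\sqrt{N}\ge 1$ into the prefactor. No gaps.
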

	\begin{proof}[Proof of Lemma \ref{stimaprob1}]
		First, note that 
		\begin{equation}
		\begin{split}
		&\P\big(  |c-Y|\leq \varepsilon   \big)=\P\big(  |Y-c|\leq \varepsilon   \big)
		\\&=\P\big(  Y-c\in [-\varepsilon, \varepsilon]   \big)
		=\P\big(  Y\in [c-\varepsilon, c+ \varepsilon]   \big)
		\\&=\int_{c-\varepsilon}^{c+ \varepsilon} \tfrac{\sqrt{N}}{\sqrt{2\pi T}} \,e^{-\frac{Ny^2}{2T}} dy.
		\end{split}
		\end{equation}
		This and the fact that $\sup_{x \in [c-\varepsilon, c+\varepsilon]} x^2=\max\!\left\{(c-\varepsilon)^2,(c+\varepsilon)^2 \right\}$ ensure that 
		\begin{equation}
		\label{inmax}
		\begin{split}
		&\P\big(  |c-Y|\leq \varepsilon   \big)
		\geq \tfrac{2\varepsilon \sqrt{N}}{\sqrt{2\pi T}}\,e^{-\frac{N\max\{(c-\varepsilon)^2,(c+\varepsilon)^2 \}}{2T}}.
		\end{split}
		\end{equation}
		Moreover, observe that the fact that $\forall \, a,b \in [0, \infty) \colon \max\{a, b\} \leq a +b$ assures that 
			\begin{equation}
			\begin{split}
		&\max\!\left\{(c-\varepsilon)^2,(c+\varepsilon)^2 \right\} \leq (c-\varepsilon)^2 + (c+\varepsilon)^2 \\
		& = c^2+\varepsilon^2-2c\varepsilon+ c^2+\varepsilon^2+2c\varepsilon =2(c^2+\varepsilon^2).
		\end{split}
		\end{equation}
		This implies that 
		\begin{equation}
		\frac{N\max\!\left\{(c-\varepsilon)^2,(c+\varepsilon)^2 \right\}}{2T}
		\leq \frac{N(c^2+\varepsilon^2) }{T}.
		\end{equation}
		Combining this with \eqref{inmax}  proves that 
	  \begin{equation}
	  	\begin{split}
	  	&\P\big(  |c-Y|\leq \varepsilon   \big)
	  	\geq \tfrac{2\varepsilon \sqrt{N}}{\sqrt{2\pi T}}\,e^{-\frac{N(c^2+\varepsilon^2)}{T}}
	  \geq 	\tfrac{\varepsilon}{\sqrt{2 \pi T}} \,\exp\!\left(-\tfrac{N(c^2+\varepsilon^2)}{T}\right).
	  	\end{split}
	  \end{equation}
		The proof of Lemma \ref{stimaprob1} is thus completed.
	\end{proof}
	\begin{lemma}
		\label{stima1}
		Assume Setting~\ref{concrete}, let $(\Omega,\mathcal{F},\mathbb{P})$ be a probability space, let $T,   x ,  \gamma \in (0,\infty)$,  $\nu \in (\nicefrac{1}{4},1]$, $N \in \N$, $v \in H$ satisfy that 
		\begin{equation}
		\gamma=\textstyle\sum_{n=-N}^{N}  (\eta +4\pi^2 n^2)^{-2\nu} ,
		\end{equation}
		 and let $W \colon [0,T]\times \Omega \to H_{-\nu}$ be an $\operatorname{Id}_{H}$-cylindrical Wiener process.
		Then 
		\begin{equation}
		\begin{split}
		&\P\!\left( \left\| (\eta -A)^{-\nu}\big(P_N(v)- P_N(W_{\nicefrac{T}{N}})\big)\right\|_H  \leq x  \right)
		\geq  \left[\!\tfrac{x}{\sqrt{2 \pi \gamma T}}\right]^{(2N+1)}\!\exp\!\left( -\tfrac{3N^2}{T}\!\left[    \|v\|_H^2+\tfrac{x^2}{\gamma} \right]\!\right)\!.
		\end{split}
		\end{equation}
	\end{lemma}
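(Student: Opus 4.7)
The plan is to diagonalize everything in the eigenbasis $(e_n)_{n \in \Z}$ of $A$ and then reduce to $2N+1$ independent one-dimensional Gaussian probability estimates via Lemma \ref{stimaprob1}. First, for each $n \in \{-N,-N+1,\dots,N\}$ set $c_n = \langle e_n, v\rangle_H$, $\lambda_n = (\eta + 4\pi^2 n^2)^{-2\nu}$, and $Y_n = \langle e_n, W_{T/N}\rangle_H$. Because $W$ is an $\operatorname{Id}_H$-cylindrical Wiener process and $(e_n)_{n \in \Z}$ is an orthonormal basis of $H$, the random variables $Y_{-N},Y_{-N+1},\dots,Y_N$ are i.i.d.\ $N(0,T/N)$. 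Since $(\eta - A)^{-\nu}e_n = (\eta + 4\pi^2 n^2)^{-\nu} e_n$ and $P_N$ projects onto $\operatorname{span}(e_{-N},\dots,e_N)$, the Parseval identity gives
\begin{equation*}
\big\|(\eta - A)^{-\nu}\big(P_N(v) - P_N(W_{T/N})\big)\big\|_H^2 = \sum_{n=-N}^{N} \lambda_n (c_n - Y_n)^2,
\end{equation*}
and by definition $\sum_{n=-N}^{N} \lambda_n = \gamma$ and $\sum_{n=-N}^{N} c_n^2 \leq \|v\|_H^2$.

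Next I would choose the uniform tolerance $\varepsilon := x/\sqrt{\gamma}$. The key observation is that the event $\bigcap_{n=-N}^{N} \{|c_n - Y_n| \leq \varepsilon\}$ is contained in the event that the left-hand side above is bounded by $\varepsilon^2 \sum_n \lambda_n = x^2$, hence
\begin{equation*}
\P\Big(\big\|(\eta - A)^{-\nu}\big(P_N(v) - P_N(W_{T/N})\big)\big\|_H \leq x\Big) \geq \P\Big(\bigcap_{n=-N}^{N} \{|c_n - Y_n| \leq \tfrac{x}{\sqrt{\gamma}}\}\Big).
\end{equation*}
By independence of $Y_{-N},\ldots,Y_N$ this probability factorizes, and Lemma \ref{stimaprob1} applied with $c=c_n$ and $\varepsilon = x/\sqrt{\gamma}$ yields
\begin{equation*}
\prod_{n=-N}^{N} \P\!\left(|c_n - Y_n| \leq \tfrac{x}{\sqrt{\gamma}}\right) \geq \left[\tfrac{x}{\sqrt{2\pi \gamma T}}\right]^{(2N+1)} \exp\!\left(-\tfrac{N}{T}\sum_{n=-N}^{N}\!\Big(c_n^2 + \tfrac{x^2}{\gamma}\Big)\right).
\end{equation*}

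Finally, I would bound the exponent by using $\sum_{n=-N}^{N} c_n^2 \leq \|v\|_H^2$ and the elementary inequalities $N \leq 3N^2$ and $N(2N+1) \leq 3N^2$ (valid for all $N \in \N$) to collapse the two sums into $\tfrac{3N^2}{T}\big[\|v\|_H^2 + x^2/\gamma\big]$, which delivers exactly the claimed lower bound. There is no real obstacle here; the only points to handle with some care are the justification that $(Y_n)_{n=-N}^{N}$ are i.i.d.\ $N(0,T/N)$ (which is the defining property of an $\operatorname{Id}_H$-cylindrical Wiener process tested against an orthonormal family in $H$) and the diagonalization identity for $\|(\eta-A)^{-\nu}P_N(\cdot)\|_H^2$, both of which follow directly from Setting \ref{concrete}.
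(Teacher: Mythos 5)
Your proposal is correct and follows essentially the same route as the paper's proof: Parseval diagonalization in the eigenbasis, reduction to the intersection event $\bigcap_{n}\{|c_n-Y_n|\le x/\sqrt{\gamma}\}$, factorization by independence of the Fourier modes of the cylindrical Wiener process, application of Lemma \ref{stimaprob1} with $\varepsilon = x/\sqrt{\gamma}$, and the final crude bound $2N+1\le 3N$ to reach the $\tfrac{3N^2}{T}$ exponent. The only cosmetic difference is that the paper works with $\beta^n=\langle e_n, P_N(W_{T/N})\rangle_H$ rather than pairing $W_{T/N}\in H_{-\nu}$ directly against $e_n$, which is the cleaner way to phrase your $Y_n$ but changes nothing in the argument.
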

	\begin{proof}[Proof of Lemma \ref{stima1}]
		Throughout this proof 
		let $\beta^n\colon  \Omega \to \R $, $n \in \{-N,-N+1,\dots,N-1,N\}$, 
		be the random variables which satisfy for all $n \in \{-N,-N+1,\dots,N-1,N\}$ that 
		\begin{equation}
		\beta^n=\left< e_n, P_N (W_{\nicefrac{T}{N}})\right>_H
		\end{equation}
		and let $ v_n\in \R$, $n \in \Z$,  be the real numbers which satisfy for all  $n \in \Z$ that 
		\begin{equation}
		v_n=\left< e_n, v\right>_H.
		\end{equation}
		Note that Parseval's identity assures that  
		\begin{equation}
		\begin{split}
		&\P\!\left( \left\|  (\eta -A)^{-\nu}\big(P_N(v)- P_N(W_{\nicefrac{T}{N}})\big)  \right\|_H  \leq x  \right)
		\\&= \P\!\left( \left\|  (\eta -A)^{-\nu}\big(P_N(v)- P_N(W_{\nicefrac{T}{N}}) \big)  \right\|_H^2  \leq x^2  \right)
		\\&=\P\!\left( \textstyle\sum_{n=-\infty}^{\infty} \left|   \left<  (\eta -A)^{-\nu}\big(P_N(v)- P_N(W_{\nicefrac{T}{N}})\big), e_n    \right>_H \right|^2\leq x^2  \right).
		\end{split}
		\end{equation}
		This implies that 
		\begin{equation}
		\begin{split}
		&\P\!\left( \left\|  (\eta -A)^{-\nu}\big(P_N(v)- P_N(W_{\nicefrac{T}{N}})\big)  \right\|_H  \leq x  \right)
			\\&=\P\!\left( \textstyle\sum_{n=-\infty}^{\infty} \left|   \left< P_N(v)- P_N(W_{\nicefrac{T}{N}}) ,  (\eta -A)^{-\nu}e_n    \right>_H \right|^2\leq x^2  \right)
			\\&=\P\!\left( \textstyle\sum_{n=-\infty}^{\infty} \left|   \left< P_N(v)- P_N(W_{\nicefrac{T}{N}}) ,  (\eta +4\pi^2 n^2)^{-\nu}e_n    \right>_H \right|^2\leq x^2  \right).
		\end{split}
		\end{equation}
		Hence, we obtain  that 
		\begin{equation}
		\begin{split}
		&\P\!\left( \left\|  (\eta -A)^{-\nu}\big(P_N(v)- P_N(W_{\nicefrac{T}{N}}) \big)  \right\|_H  \leq x  \right)
		\\&=\P\!\left( \textstyle\sum_{n=-N}^{N}  (\eta +4\pi^2 n^2)^{-2\nu}\left|   \left< P_N(v)- P_N(W_{\nicefrac{T}{N}}) , e_n    \right>_H \right|^2\leq x^2  \right)
		\\&=\P\!\left( \textstyle\sum_{n=-N}^{N}  (\eta +4\pi^2 n^2)^{-2\nu}\left| \left<e_n,v\right>_H-\left< P_N (W_{\nicefrac{T}{N}}) , e_n    \right>_H \right|^2\leq x^2  \right)
		\\&=\P\!\left( \textstyle\sum_{n=-N}^{N}  (\eta +4\pi^2 n^2)^{-2\nu}\left| v_n-   \beta^n\right|^2\leq x^2  \right).
		\end{split}
		\end{equation}
		This proves that 
		\begin{equation}
		\begin{split}
		&\P\!\left( \left\|  (\eta -A)^{-\nu}\big(P_N(v)- P_N(W_{\nicefrac{T}{N}})  \big)  \right\|_H  \leq x  \right)
		\\&\geq \P\!\left( \left(\textstyle\sum_{n=-N}^{N}  (\eta +4\pi^2 n^2)^{-2\nu} \right)\sup_{n \in \{-N,\dots,N\}} \left| v_n-  \beta^n\right|^2\leq x^2  \right)
		\\&= \P\!\left( \sup_{n \in \{-N,\dots,N\}}  \left| v_n- \beta^n\right|^2\leq \tfrac{x^2}{\gamma}  \right).
		\end{split}
		\end{equation}
		The fact that $ v_n-   \beta^n$, $n \in \{-N,-N+1,\dots,N-1,N\}$, are independent random variables (cf., e.g.,  Proposition 2.5.2 in~\cite{LiuRoeckner2015}) therefore implies that 
		\begin{equation}
			\label{y1}
		\begin{split}
		&\P\!\left( \left\|  (\eta -A)^{-\nu}\big(P_N(v)- P_N(W_{\nicefrac{T}{N}})  \big)  \right\|_H  \leq x  \right)
		\\&\geq \P\!\left( \cap_{n=-N}^{N} \left\{ \left| v_n-   \beta^n\right|^2    \leq \tfrac{x^2}{\gamma} \right\} \right)\\
		&=\textstyle \prod_{n=-N}^{N}\P\!\left(  |v_n - \beta^n|^2\leq  \tfrac{x^2}{\gamma}  \right).
		\end{split}
		\end{equation}
		Next note that Lemma \ref{stimaprob1} (with $(\Omega, \mathcal{F}, \P)=(\Omega, \mathcal{F}, \P)$, $c=v_n$, $T=T$, $\varepsilon=\tfrac{x}{\sqrt{\gamma}}$, $N=N$,  $Y=\beta^n$ for $n \in \{-N,\dots, N\}$ in the notation of Lemma \ref{stimaprob1}) ensures for all $n \in \{-N,\dots, N\}$ that 
		\begin{equation}
		\begin{split}
		&\P\!\left(  |v_n - \beta^n|^2\leq  \tfrac{x^2}{\gamma}  \right)=\P\!\left(  |v_n - \beta^n|\leq  \tfrac{x}{\sqrt{\gamma}}  \right)
		\\&\geq \tfrac{x}{\sqrt{2 \pi \gamma T}} \,\exp\!\left( -\tfrac{N}{T}\!\left[|v_n|^2+\tfrac{x^2}{\gamma}\right]\right).
		\end{split}
		\end{equation}
		Combining this with \eqref{y1} establishes that 
		\begin{equation}
		\begin{split}
		&\P\!\left( \left\|  (\eta -A)^{-\nu}\big(P_N(v)- P_N(W_{\nicefrac{T}{N}})  \big)  \right\|_H  \leq x  \right)
		\\&\geq \textstyle \prod_{n=-N}^{N}\!\left[\tfrac{x}{\sqrt{2 \pi \gamma T}} \,\exp\!\left( -\tfrac{N}{T}\!\left[|v_n|^2+\frac{x^2}{\gamma}\right]\right)\right]
		\\&=\left[\tfrac{x}{\sqrt{2 \pi \gamma T}}\right]^{(2N+1)}\exp\!\left(\textstyle -\sum_{n=-N}^{N}\tfrac{N\left[|v_n|^2+\frac{x^2}{\gamma}\right]}{T}\right).
		\end{split}
		\end{equation}
		Hence, we obtain that 
		\begin{equation}
		\begin{split}
		&\P\!\left( \left\|  (\eta -A)^{-\nu}\big(P_N(v)- P_N(W_{\nicefrac{T}{N}})  \big)  \right\|_H  \leq x  \right)
		\\&\geq \left[\tfrac{x}{\sqrt{2 \pi \gamma T}}\right]^{(2N+1)}\exp\!\left( -\tfrac{N}{T}\!\left[    \|v\|_H^2+\tfrac{(2N+1)x^2}{\gamma} \right]\right)
		\\&\geq\left[\tfrac{x}{\sqrt{2 \pi \gamma T}}\right]^{(2N+1)}\exp\!\left( -\tfrac{N}{T}\!\left[    3N\|v\|_H^2+\tfrac{3Nx^2}{\gamma} \right]\right)
		\\&\geq \left[\tfrac{x}{\sqrt{2 \pi \gamma T}}\right]^{(2N+1)}\exp\!\left( -\tfrac{3N^2}{T}\!\left[  \|v\|_H^2+\tfrac{x^2}{\gamma} \right]\right).
		\end{split}
		\end{equation}
		The proof of Lemma \ref{stima1} is thus completed.
	\end{proof}
	\begin{cor}
		\label{stima1cor}
		Assume Setting~\ref{concrete}, let $(\Omega,\mathcal{F},\mathbb{P})$ be a probability space,  let $T,   x,  \gamma, y \in (0,\infty)$, $p \in [2,\infty)$, $\delta \in [1,\infty)$, $\nu \in (\nicefrac{1}{4},\nicefrac{3}{4})$, $s \in (\nicefrac{1}{4},1-\nu]$, $N \in \N$,   $v \in L^p(\lambda;\R   )$, $S \in L(H,H_{\nu + s})$   satisfy  that 
		\begin{equation}
			\gamma=\textstyle\sum_{n=-N}^{N}  (\eta +4\pi^2 n^2)^{-2\nu}, 	
		\end{equation}
		\begin{equation}
				\label{co2}
		\|(\eta-A)^{(\nu+s)}S\|_{L(H)} \leq \delta \left[\tfrac{N}{T}\right]^{(\nu+s)}, 
		\end{equation}
		\begin{equation}
				\label{co1}
	\forall \, u \in H \colon 	(\eta-A)^{-\nu}Su=S	(\eta-A)^{-\nu}u, 
		\end{equation}
		\begin{equation}
		y=\tfrac{x}{\delta}  \left[\tfrac{T}{N}\right]^{(\nu+s)}\big\|(\eta-A)^{-s}\big\|_{L(H,L^p(   \lambda   ;\R)   )}^{-1} ,
		\end{equation}
		and let $W \colon [0,T]\times \Omega \to H_{-\nu}$  be an $\operatorname{Id}_{H}$-cylindrical Wiener process.
		Then 
		\begin{equation}
		\label{poi}
		\begin{split}
		&\P\!\left( \left\| S\big(P_N(v)- P_N(W_{\nicefrac{T}{N}})  \big)\right\|_{L^p(\lambda;\R   )}  \leq x  \right)
		\\&\geq \left[\tfrac{y}{\sqrt{2 \pi \gamma T}}\right]^{(2N+1)}\exp\!\left( -\tfrac{3N^2}{T}\!\left[   \|v\|_H^2+\tfrac{y^2}{\gamma} \right]\right).
		\end{split}
		\end{equation}
	\end{cor}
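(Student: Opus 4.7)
The plan is to reduce the statement to Lemma~\ref{stima1} by comparing the $L^p(\lambda;\R)$-norm of $S w$, where $w := P_N(v) - P_N(W_{\nicefrac{T}{N}})$, to the $H$-norm of $(\eta-A)^{-\nu} w$. Note that $w$ takes values in the finite-dimensional span of $e_{-N},\ldots,e_N$ and is therefore smooth in every Sobolev scale, so fractional powers of $\eta-A$ act on it as explicit multipliers on its Fourier coefficients and all manipulations below are rigorous.

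First I would invoke the continuous embedding $H_s \hookrightarrow L^p(\lambda;\R)$ supplied by Lemma~\ref{C} (applicable since $s > \nicefrac{1}{4} \geq \nicefrac{1}{4} - \nicefrac{1}{(2p)}$) to obtain
\begin{equation*}
\|S w\|_{L^p(\lambda;\R)} \leq \|(\eta-A)^{-s}\|_{L(H, L^p(\lambda;\R))}\, \|(\eta-A)^{s} S w\|_H.
\end{equation*}
Next, the commutativity hypothesis \eqref{co1} combined with the operator bound \eqref{co2} would yield, via the rewriting
\begin{equation*}
(\eta-A)^{s} S w = (\eta-A)^{\nu+s} (\eta-A)^{-\nu} S w = (\eta-A)^{\nu+s} S (\eta-A)^{-\nu} w,
\end{equation*}
the estimate
\begin{equation*}
\|(\eta-A)^{s} S w\|_H \leq \delta \left[\tfrac{N}{T}\right]^{\nu+s} \|(\eta-A)^{-\nu} w\|_H.
\end{equation*}
Chaining these two displays with the definition of $y$ then establishes the event inclusion
\begin{equation*}
\bigl\{\|(\eta-A)^{-\nu} w\|_H \leq y\bigr\} \subseteq \bigl\{\|S w\|_{L^p(\lambda;\R)} \leq x\bigr\}.
\end{equation*}

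The conclusion \eqref{poi} would then follow by monotonicity of $\P$ and a direct application of Lemma~\ref{stima1} with its free parameter $x$ replaced by the present $y$. Here I use that $v \in L^p(\lambda;\R) \subseteq H$, since $p \geq 2$ and $(0,1)$ has finite Lebesgue measure, so that $\|v\|_H$ on the right-hand side of \eqref{poi} is well defined. The only subtlety, rather than a genuine obstacle, is justifying the functional-calculus manipulations and the commutation of $(\eta-A)^{\nu+s}$ through $S$; both are routine because $w$ lies in the finite-dimensional range of $P_N$, where $\eta-A$ diagonalizes explicitly in the orthonormal basis $e_{-N},\ldots,e_N$.
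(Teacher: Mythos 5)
Your proposal is correct and follows essentially the same route as the paper's own proof: both use the commutation hypothesis \eqref{co1} to rewrite $S(P_N(v)-P_N(W_{\nicefrac{T}{N}}))$ as $(\eta-A)^{\nu+s}S$ applied to $(\eta-A)^{-\nu}(P_N(v)-P_N(W_{\nicefrac{T}{N}}))$ composed with $(\eta-A)^{-s}$, bound this via \eqref{co2} and the embedding constant $\|(\eta-A)^{-s}\|_{L(H,L^p(\lambda;\R))}$, and then invoke Lemma~\ref{stima1} with threshold $y$. The only difference is cosmetic (you split off the embedding factor first, the paper factors all three operator norms in one chain), so there is nothing to add.
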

	\begin{proof}[Proof of Corollary \ref{stima1cor}]
		First, note that \eqref{co1} ensures that 
		\begin{equation}
		\begin{split}
		&\big\| S (P_N(v)- P_N(W_{\nicefrac{T}{N}}) )\big\|_{L^p(   \lambda   ;\R)}
		\\&= \big\|  (\eta-A)^{\nu}    (\eta-A)^{-\nu}  S   (P_N(v)- P_N(W_{\nicefrac{T}{N}}) )\big\|_{L^p(   \lambda   ;\R)}
		\\&=\big\|  (\eta-A)^{\nu}   S   (\eta-A)^{-\nu}   (P_N(v)- P_N(W_{\nicefrac{T}{N}}) )\big\|_{L^p(   \lambda   ;\R)}.
		\end{split}
		\end{equation}
		This implies that 
		\begin{align*}
		&\big\| S (P_N(v)- P_N(W_{\nicefrac{T}{N}}) )\big\|_{L^p(   \lambda   ;\R)} \numberthis
		\\&\leq \big\|  (\eta-A)^{s}  (\eta-A)^{-s}  (\eta-A)^{\nu} S \big\|_{L(H,L^p(   \lambda   ;\R)   )}\big\|   (\eta-A)^{-\nu}  (P_N(v)- P_N(W_{\nicefrac{T}{N}})) \big\|_H
		\\&\leq \big\| (\eta-A)^{(\nu+s)} S   \big\|_{L(H)}  \big\|(\eta-A)^{-s}\big\|_{L(H,L^p(   \lambda   ;\R)   )}
		\big\|   (\eta-A)^{-\nu}  (P_N(v)- P_N(W_{\nicefrac{T}{N}})) \big\|_H.
		\end{align*}
		Combining this with \eqref{co2} proves that 
		\begin{equation}
		\begin{split}
		&\big\| S (P_N(v)- P_N(W_{\nicefrac{T}{N}}) )\big\|_{L^p(   \lambda   ;\R)}
			\\&\leq \delta\Big[\tfrac{N}{T}\Big]^{(\nu+s)} \big\|(\eta-A)^{-s}\big\|_{L(H,L^p(   \lambda   ;\R)   )} \big\|   (\eta-A)^{-\nu}  (P_N(v)- P_N(W_{\nicefrac{T}{N}})) \big\|_H.
		\end{split}
		\end{equation}
		Hence, we obtain that 
		\begin{align*}
		&\P\!\left( \left\| S\big(P_N(v)- P_N(W_{\nicefrac{T}{N}})  \big)\right\|_{L^p(\lambda;\R   )}  \leq x  \right) \numberthis
		\\&\geq \P\!\left(  \big\|   (\eta-A)^{-\nu}  (P_N(v)- P_N(W_{\nicefrac{T}{N}}) ) \big\|_H \leq \tfrac{x}{\delta}  \Big[\tfrac{T}{N}\Big]^{(\nu+s)}\big\|(\eta-A)^{-s}\big\|_{L(H,L^p(   \lambda   ;\R)   )}^{-1}  \right).
		\end{align*}
		Combining this with  Lemma \ref{stima1} (with $(\Omega,\mathcal{F},\mathbb{P})=(\Omega,\mathcal{F},\mathbb{P})$, $T=T$,  $x=y $, $\nu=\nu$, $N=N$, $v=v$, $W=W$ in the notation of     Lemma \ref{stima1}) establishes \eqref{poi}.
		The proof of Corollary \ref{stima1cor} is thus completed.
	\end{proof}
	\subsection[Divergence results for general  Euler-type approximation schemes]{Divergence results for general  Euler-type approximation schemes for SPDEs with superlinearly growing nonlinearities}
	\label{sec33}
	\begin{prop}
		\label{poliG}
		Assume Setting~\ref{concrete}, let $(\Omega,\mathcal{F},\mathbb{P})$ be a probability space, let $T  \in (0,\infty)$, $q \in \{2,3,\dots\}$, $a_0,a_1,\dots,a_{q-1} \in \R$, $a_q \in \R\backslash\{0\}$, $\chi \in (\nicefrac{1}{4}, 1]$, $\nu \in (\nicefrac{1}{4},\nicefrac{3}{4})$, 
		 $\xi \in H_\chi$,
		let $W \colon [0,T]\times \Omega \to H_{-\nu}$ be an $\operatorname{Id}_{H}$-cylindrical Wiener process, 
		let $S_N \in L(H_{-\nu})$, $N \in \N$, be  linear operators which satisfy for all $N \in \N$, $r \in [-\nu, \infty)$, $v, u \in H$ that 
\begin{equation}
\label{eq:smoothing}
S_N(H_r)\subseteq  H_{r+1}, \qquad S_Ne_0= e_0, \qquad \left<S_Nu,v \right>_H=\left<u,S_Nv \right>_H,
\end{equation}
\begin{equation}
\label{eq:smoothing.constant}
\sup\nolimits_{M \in \N} \sup\nolimits_{s \in [0,1]} \sup\nolimits_{w \in H, \|w\|_H\leq 1} \big( M^{-s}\|S_Mw\|_{H_s} \big)<\infty,
\end{equation}
\begin{equation}
\label{eq:generator.commute}
(\eta-A)^{-\nu}S_Nv=S_N(\eta-A)^{-\nu}v, \qquad \text{and} \qquad P_NS_Nv=S_NP_Nv,
\end{equation}
		and let $Y^N\colon \{0,1,\dots,N\}\times\Omega \to H$, $N \in \N$,  be the  stochastic processes which satisfy for all   $N \in \N$, $n \in \{0,1,\dots,N-1\}$ that $Y_0^N=P_N(\xi)$ and 
		\begin{equation}
		Y_{n+1}^N=P_NS_N\Big(Y_n^N+\tfrac{T}{N}\!\left(\textstyle\sum_{k=0}^{q}a_k\big[Y_n^N\big]^k\right)+ \big(W_{\frac{(n+1)T}{N}}    - W_{\frac{nT}{N}} \big)\Big).
		\end{equation}
		Then it holds for all $r \in (0,\infty)$ that $\liminf_{N\to \infty} \E\!\left[\|Y_N^N\|_H^r \right]=\infty$.
	\end{prop}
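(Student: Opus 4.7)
The plan is to apply Corollary~\ref{cor6} to the iteration after shifting the index by one, in order to accommodate the deterministic initial condition. Setting $Z_n := W_{nT/N} - W_{(n-1)T/N}$ for $n \in \{1,\ldots,N\}$ (these are i.i.d., since $W$ is $\operatorname{Id}_H$-cylindrical) and $\Phi_N(v,z) := P_N S_N(v + \tfrac{T}{N}\sum_{k=0}^q a_k v^k + z)$, the scheme reads $Y_{n+1}^N = \Phi_N(Y_n^N, Z_{n+1})$. Since $Y_1^N$ is $\sigma(Z_1)$-measurable, it is independent of $\sigma(Z_2,\ldots,Z_N)$, so $(Y_1^N,\ldots,Y_N^N)$ fits Setting~\ref{sec2new} as an $(N-1)$-step iteration. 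As Lyapunov function I would take $\mathcal{V}(v) := |\langle e_0, v\rangle_H|^r$ for arbitrary fixed $r \in (0,\infty)$; since $\mathcal{V}(v) \leq \|v\|_H^r$, it suffices to show $\liminf_N \E[\mathcal{V}(Y_N^N)] = \infty$.

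The key structural observation is that the hypotheses $S_N e_0 = e_0$, $P_N e_0 = e_0$, and the self-adjointness of $S_N$ yield
\begin{equation*}
\langle e_0, \Phi_N(v, z)\rangle_H = \langle e_0, v\rangle_H + \tfrac{T}{N}\textstyle\sum_{k=0}^q a_k \int_0^1 v(x)^k \, dx + \langle e_0, z\rangle_H,
\end{equation*}
so the projection of the scheme onto $\operatorname{span}(e_0)$ mimics a one-dimensional Euler iteration with superlinear drift. For $v$ with $c := \langle e_0, v\rangle_H$ large and perpendicular part $v_\perp := v - c e_0$ controlled (say $\|v_\perp\|_H \leq |c|^{1/2}$), binomial expansion together with $\int v_\perp = 0$ gives $\int_0^1 v^k\,dx = c^k + O(|c|^{k-2}\|v_\perp\|_H^2)$, whence $\int f(v) = a_q c^q(1+o(1))$. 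Combining this with a Gaussian estimate on the $e_0$-mode of $Z_1$ (Lemma~\ref{stimaprob1}), on an event of positive probability one obtains the amplification $\mathcal{V}(\Phi_N(v, z)) \geq c_N^r \, \mathcal{V}(v)^q$ with $c_N := \tfrac{T|a_q|}{4N}$ and $\alpha := q > 1$.

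To propagate the control on $\|v_\perp\|_H$ across iterations I would invoke the sharper Proposition~\ref{theorem9} rather than Corollary~\ref{cor6}, choosing the sets $\mathbb{H}_n$ to encode both the amplification threshold on $|\langle e_0,\cdot\rangle_H|$ and the smallness of the perpendicular part; Corollary~\ref{stima1cor} supplies the lower bounds needed to ensure that the noise does not destroy these constraints with sufficient probability. The initial requirement $\mathcal{V}(Y_1^N) \geq c_N^{r/(1-q)}\theta$ cannot hold deterministically, since $\mathcal{V}(Y_0^N) = |\langle e_0,P_N\xi\rangle_H|^r = |\langle e_0,\xi\rangle_H|^r$ is bounded independently of $N$; however, a direct Gaussian lower-tail estimate on $\langle e_0, W_{T/N}\rangle_H \sim \mathcal{N}(0,T/N)$ shows that this event holds with probability at least $\exp(-CN^{1+2/(r(q-1))})$ after fixing $\theta > 1$.

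Plugging these estimates into Proposition~\ref{theorem9} (applied to the shifted iteration) with $\theta > 1$ fixed produces a bound of the form $\E[\mathcal{V}(Y_N^N)] \geq \theta^{q^{N-1}}$ multiplied by probability factors decaying at worst like $\exp(-CN^\beta)$ for some $\beta > 0$; since $\theta^{q^{N-1}}$ grows doubly exponentially, the right-hand side tends to $+\infty$, as required. The main obstacle will be the propagation of the control on $v_\perp$ through all $N-1$ amplification steps: both the nonlinearity $f(v)$ and the noise $Z_n$ inject high-frequency modes at every iteration, and verifying via Corollary~\ref{stima1cor} that these remain small on an event whose probability decays at most exponentially in a polynomial of $N$ (so as to be dominated by the double-exponential $\theta^{q^{N-1}}$) is the most delicate technical point.
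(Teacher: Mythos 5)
Your overall route is the one the paper itself takes: the zeroth Fourier mode $|\langle e_0,\cdot\rangle_H|^r$ as reverse Lyapunov functional, the shift to the iteration started at $Y_1^N$ so that the initial value is independent of the remaining increments, Proposition~\ref{theorem9} with sets $\mathbb{H}_n$ enforcing smallness of the orthogonal complement, Gaussian lower bounds from Lemma~\ref{stimaprob1}/Corollary~\ref{stima1cor}, and the final comparison of $\theta^{(q^{N-1})}$ against probability factors of size $\exp(-CN^{\beta})$. There is, however, a concrete gap in your amplification step. You control the perpendicular part $v_\perp=v-\langle e_0,v\rangle_H e_0$ only in the $H=L^2$ norm and claim $\int_0^1 v^k\,dx=c^k+O\big(|c|^{k-2}\|v_\perp\|_H^2\big)$. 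For $k\geq 3$ this is false: the binomial expansion produces the terms $\binom{k}{m}c^{k-m}\int_0^1 v_\perp^m\,dx$ for $m=2,\dots,k$, and $\int_0^1 v_\perp^m\,dx$ with $m\geq 3$ is not controlled by $\|v_\perp\|_{L^2}$. This is exactly why the paper formulates the good sets through the $L^p(\lambda;\R)$-norm of $v-\langle e_0,v\rangle_H e_0$ with $p\geq 2q$, invokes the Sobolev embedding $H_\chi\subseteq L^p(\lambda;\R)$ of Lemma~\ref{C} (this is where $\chi>\nicefrac{1}{4}$ and the $H_\chi$-regularity of the iterates enter), and applies the small-ball estimate of Corollary~\ref{stima1cor} in the $L^p$-norm rather than in $H$. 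Your plan can be repaired by replacing $\|v_\perp\|_H$ by $\|v_\perp\|_{L^p(\lambda;\R)}$ throughout, but as written the central inequality $\mathcal{V}(\Phi_N(v,z))\geq c_N^r\,\mathcal{V}(v)^q$ does not follow for $q\geq 3$ (it is fine only for $q=2$).

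A second, smaller issue is your choice of a fixed $\theta>1$. Your amplification mechanism is drift dominance, which needs $\tfrac{T|a_q|}{4N}|c|^q$ to dominate $|c|$, the lower-order drift and an $O(1)$ noise contribution, hence $|c|$ at least of order $N^{\nicefrac{1}{(q-1)}}$ (the paper requires $|c|\gtrsim N$ and accordingly takes $\theta_{N,r}\sim N^r$, cf.\ \eqref{eq:theta}). With $\theta$ fixed, the infimum in Proposition~\ref{theorem9} at the early steps runs over states with $|\langle e_0,v\rangle_H|$ of constant or intermediate size, where $c$ and $\tfrac{T}{N}a_qc^q$ may cancel and your claimed bound does not follow from drift dominance; one would either have to argue there by forcing the noise to be large (at a per-step cost of order $\exp(-cN^{1+2/(q-1)})$, which must then be tracked uniformly over the admissible states), or simply let $\theta$ grow with $N$ as the paper does, which only worsens the initial-event probability to roughly $\exp(-CN^{3+2/(q-1)})$ and is still harmless against the double-exponential factor. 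With these two repairs your argument coincides with the paper's proof of Proposition~\ref{poliG}.
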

	\begin{proof}[Proof of Proposition \ref{poliG}]
		Throughout this proof let $p \in [2q,\infty)$, $s \in (\nicefrac{1}{4},1-\nu]$, 
		let 
		$\zeta_r \in [1,\infty)$, $r \in [0,1]$,  be real numbers which satisfy for all $r \in [0,1]$ that 
		\begin{equation}
		\sup_{N \in \N} \big( N^{-r}\big\|(\eta- A)^{r} S_N\big\|_{L(H)} \big)\leq \zeta_r T^{-r},
		\end{equation}
		let $C \in (0,\infty)$ be the real number which satisfies that
			\begin{equation}
	\label{defrhonNpoliG} 
	C=\sup_{v \in (L^p(\lambda; \R)\cap H_\chi)\backslash\{0\} }   \frac{\|v\|_{L^p(\lambda; \R)}}{\| v\|_{H_\chi}}
	\end{equation}
	(cf.~Lemma~\ref{C}),		for every $N \in \N$, $r \in (0,\infty)$ let  $ \kappa, \vartheta, \rho_{N,r}, \theta_{N,r} \in (1,\infty)$, $c_{N,r} \in (0,1]$,  $\gamma_N, y_N, z_{N,r}, g_{N,r} \in (0,\infty)$  be the   real numbers which satisfy  that

		\begin{equation}
			\kappa=(q+2) \, |\!\max\{C,1\}|^q \max\{T,1\} 
			\max_{k \in\{0,1,\dots, q\}}\!\left\{1, |a_k|\right\}
			\max\{1,\|\xi\|_{H_\chi}^q\},
		\end{equation}
				\begin{equation}
		\label{eq:vartheta}
		\vartheta= 2^{(q-1)} \max\{C,1\}\max\{T,1\}\max_{k \in\{0,1,\dots, q\}}\!\left\{8, |a_k|\right\}\!,
		\end{equation}
				\begin{equation}
		\rho_{N,r}=\max\!\left\{8\vartheta^2\max\{C,1\}\max\{T,1\}\tfrac{\zeta_{\chi} N^\chi}{|c_{N,r}|^{\nicefrac{1}{r}}\min\{T,1\}}, \tfrac{1}{2^{\nicefrac{1}{q}}-1}  \right\}\!,
		\end{equation}
		\begin{equation}
		\label{eq:theta}
		\theta_{N,r}=\max\!\left\{ \left[\tfrac{4T\vartheta +8N}{T|a_q|}\right]^r,   2^r   \right\}\!,
		\end{equation}
		\begin{equation}
		c_{N,r}=\min\!\left\{\left[\tfrac{T|a_q|}{4N}\right]^r, 1   \right\}\!,\quad 
		\gamma_N=\textstyle\sum_{n=-N}^{N}  (\eta +4\pi^2 n^2)^{-2\nu},
		\end{equation}

			\begin{equation}
		z_{N,r}= \tfrac{y_N}{|\rho_{N,r}|^{(N+1)}  }, \qquad 
		g_{N,r}=\tfrac{y_N}{2 |\rho_{N,r}|^{N}}  ,
		\end{equation}
				\begin{equation}
	\text{and} \qquad	y_N=\tfrac{T^{(\nu+s)}}{\zeta_{\nu+s} N^{(\nu+s)} \|(\eta-A)^{-s}\|_{L(H,L^p(   \lambda   ;\R)   )}}  
		\end{equation}
(cf.~Lemma~\ref{C}),
let $P_0, \mathcal{R}\colon H \to H$ be the linear operators which satisfy for all $v \in H$ that 
		\begin{equation}
	P_0(v) = \langle e_0, v \rangle_H \, e_0 \qquad \text{and} \qquad	\mathcal{R}[v]=v-P_0(v), 
		\end{equation} 
		let $\Phi_N\colon H \times  H_{-\nu}\to H$, $N \in \N$,  be the functions which satisfy for all  $N \in \N$, $(v,u)\in H\times  H_{-\nu}  $ that 
		\begin{equation}
		\Phi_N(v,u)=P_NS_N\big(v+\tfrac{T}{N}\!\left(\textstyle\sum_{k=0}^{q}a_k[v]^k\right)+u\big)\mathbbm{1}^{H\times H_{-\nu}}_{L^{2q}(\lambda; \R) \times H_{-\nu} } (v,u),
		\end{equation}
		let $\mathcal{V}_r\colon H \to [0,\infty)$, $r \in (0,\infty)$,  be the functions which satisfy for all  $r \in (0,\infty)$, $v \in H$ that $\mathcal{V}_r(v)=\|P_0(v)\|_H^r $, let $Z_n^N\colon \Omega \to  H_{-\nu}$, $n \in \{1,2,\dots,N\}$, $N \in \N$, be the random variables which satisfy for all $N \in \N$, $n \in \{1,2,\dots,N\}$  that 
		\begin{equation}
		Z_n^N=W_{\frac{nT}{N}}    - W_{\frac{(n-1)T}{N}} ,
		\end{equation}
		let $(v_n^u)_{n\in \Z}\subseteq H$, $u\in H_{-\nu}$, satisfy for all $u\in H_{-\nu}$ that 
		\begin{equation}
		\label{limsupseq}
		\limsup_{n\to \infty}\|u-v_n^u\|_{H_{-\nu}}=0,
		\end{equation}
		and let $\mathbb{H}_{n,r}^N\subseteq H_\chi$, $n \in \{0,1,\dots,N\}$, $N \in \N$, $r \in (0,\infty)$,  be the sets which satisfy for all  $r \in (0,\infty)$, $N \in \N$, $n \in \{0,1,\dots,N\}$ that 
		\begin{equation}
		\label{eq:H.set}
		\mathbb{H}_{n,r}^N= \big\{ v \in H_\chi\colon \|\mathcal{R}[v]\|_{L^p(\lambda; \R)}  \leq \tfrac{1}{2} |\rho_{N,r}|^{(n-N)} \|P_0(v)\|_H \big\}.
		\end{equation}
		Note that Lemma \ref{C} (with $p=p$, $\chi=\chi$ in the notation of Lemma \ref{C}) ensures that the function
		$
		  (H_\chi \ni v \mapsto v \in L^p(\lambda; \R))
		$ is continuous.
		Combining this with the fact that the functions 
		$(H_\chi \ni v \mapsto \mathcal{R}[v]\in H_\chi)$ 
		and 
		$(H_\chi \ni v \mapsto P_0 (v)\in H)$ are continuous
		assures that 
		\begin{equation}
		(H_\chi \ni v \mapsto \mathcal{R}[v]\in L^p(\lambda; \R)) \in \mathcal{M}(\mathcal{B}(H_\chi), \mathcal{B}( L^p(\lambda; \R) ))
		\end{equation}
		and
		\begin{equation}
		(H_\chi \ni v \mapsto P_0 (v)\in H) \in \mathcal{M}(\mathcal{B}(H_\chi), \mathcal{B}( H )).
		\end{equation}
		This implies for all $N \in \N$, $n \in \{0,1,\dots,N\}$, $r \in (0,\infty)$ that 
		\begin{equation}
		\label{hnr}
		\mathbb{H}_{n,r}^N \in \mathcal{B}(H_\chi).
		\end{equation}
		Furthermore, observe that  the fact that $H_\chi\subseteq H$ continuously and Lemma 2.2 in~\cite{AnderssonJentzenKurniawan2016arXiv} (with $V_0=H$ and $V_1=H_\chi$ in the notation of Lemma 2.2 in~\cite{AnderssonJentzenKurniawan2016arXiv}) establish that 
		\begin{equation}
	\mathcal{B}(H_\chi)	\subseteq \mathcal{B}(H).
		\end{equation}
		Combining this with \eqref{hnr} proves  for all $N \in \N$, $n \in \{0,1,\dots,N\}$, $r \in (0,\infty)$ that 
		\begin{equation}
		\mathbb{H}_{n,r}^N \in \mathcal{B}(H).
		\end{equation}
		In addition, note that Lemma 5.3 in \cite{beckergess2017} (with $V=L^{2q}(\lambda; \R) \times H_{-\nu}$, $W=H\times H_{-\nu}$, $(S,\mathcal{S})=(H,\mathcal{B}(H))$, $s=0$, $\psi(v,u)=P_NS_N\left(v+\tfrac{T}{N}\textstyle\sum_{k=0}^{q}a_k[v]^k+u\right)$ for  $(v,u) \in L^{2q}(\lambda; \R) \times H_{-\nu}$, $N \in \N$  in the notation of Lemma 5.3 in \cite{beckergess2017}) ensures for all $N \in \N$ that 
		\begin{equation}
		\Phi_N \in \mathcal{M}(\mathcal{B}(H\times H_{-\nu}), \mathcal{B}(H)).
		\end{equation}
 Combining this with the fact that 
 	$\mathcal{B}(H\times H_{-\nu})=\mathcal{B}(H)\otimes \mathcal{B}(H_{-\nu})$
 proves for all $N \in \N$ that 
 \begin{equation}
	\label{phin}
 \Phi_N \in \mathcal{M}(\mathcal{B}(H)\otimes \mathcal{B}(H_{-\nu}), \mathcal{B}(H)).
 \end{equation}
 		 Moreover, note that it holds for all $r \in (0,\infty)$ that 
 		 \begin{equation}
 		 \label{vr}
 		 	\mathcal{V}_r \in \mathcal{M}(\mathcal{B}(H), \mathcal{B}([0,\infty))).
 		 \end{equation}
 		Next observe that it holds  for all $N \in \N$ that $\sigma(Y_1^N)$  and $\sigma(Z_2^N,Z_3^N,\dots,Z_N^N)$ are independent on $(\Omega,\mathcal{F},\mathbb{P})$ and $Z_2^N,Z_3^N,\dots,Z_N^N$ are i.i.d.\ random variables.
	This, \eqref{phin}, \eqref{vr}, and Proposition \ref{theorem9} (with $(H,\mathcal{H})=(H,\mathcal{B}(H))$, $(U,\mathcal{U})=( H_{-\nu},\mathcal{B}( H_{-\nu}))$, $\Phi=\Phi_M$, $(\Omega,\mathcal{F},\mathbb{P})=(\Omega,\mathcal{F},\mathbb{P})$, $N=M-1$, $c=c_{M,r}$, $\alpha= q$, $\theta=\theta_{M,r}$,  $\mathbb{H}_0=\mathbb{H}_{0,r}^M$, $\mathbb{H}_1=\mathbb{H}_{1,r}^M$, $\dots$, $\mathbb{H}_N=\mathbb{H}_{M-1,r}^M$,  $(Z_1,Z_2,\dots,Z_N)=(Z_2^M,Z_3^M,\dots,Z_M^M)$, $Y_0=Y_1^M$, $Y_1=Y_2^M$, $\dots$, $Y_{N}=Y_M^M$, $\mathcal{V}=\mathcal{V}_r$ for $r \in (0,\infty)$, $M \in \{2,3,\dots\}$ in the notation of Proposition \ref{theorem9}) ensure for all $r \in (0,\infty)$, $M \in \{2,3,\dots\}$ that 
		\begin{align*}
				&\E\Big[\left|\left< e_0, Y_M^M\right>_H\right|^r\Big]  \numberthis \\&
				\geq |\theta_{M,r}|^{\,(q^{(M-1)})}\, \P \Big(\Big\{ \left|\left< e_0, Y_1^M\right>_H\right|^r \geq|c_{M,r}|^{\nicefrac{1}{(1-q)}}\theta_{M,r} \Big\}  \cap \big\{ Y_1^M \in \mathbb{H}_{0,r}^{M}  \big\}\Big) \\
				&
				\cdot \bigg[ \textstyle\prod\limits_{n=1}^{M-1} \displaystyle \inf\!\bigg( \bigg\{ \P\Big(\Big\{\left|\left< e_0,P_MS_M\big(v+\tfrac{T}{M}\!\left(\textstyle\sum_{k=0}^{q}a_k[v]^k\right)+Z_2^M \big)\right>_H\right|^r \geq c_{M,r} \left|\left< e_0, v\right>_H\right|^{rq}\Big\} 
				\\&\qquad \qquad \qquad \qquad \cap 
				\Big\{   P_MS_M\big(v+\tfrac{T}{M}\!\left(\textstyle\sum_{k=0}^{q}a_k[v]^k\right)+Z_2^M \big)         \in \mathbb{H}_{n,r}^{M}    \Big\}         \Big)\\
				& \qquad \qquad \qquad \qquad \qquad \colon \Big( v \in \mathbb{H}_{n-1,r}^{M}  \colon |\langle e_0, v\rangle_H |^r\geq |\theta_{M,r}|^{\,(q^{(n-1)})}  \Big) \bigg\} \cup \{1\} \bigg)
				\bigg].
		\end{align*}
		This implies for all $r \in (0,\infty)$, $M \in \{2,3,\dots\}$ that 
		\begin{align*}
		\label{central4nNpoliG}
		&\E\Big[\left|\left< e_0, Y_M^M\right>_H\right|^r\Big]  \numberthis \\&
		\geq |\theta_{M,r}|^{\,(q^{(M-1)})}\, \P \Big(\Big\{ \left|\left< e_0, Y_1^M\right>_H\right|^r \geq|c_{M,r}|^{\nicefrac{1}{(1-q)}}\theta_{M,r} \Big\}  \cap \big\{ Y_1^M \in \mathbb{H}_{0,r}^{M}  \big\}\Big) \\
		&
		\cdot \bigg[ \textstyle\prod\limits_{n=1}^{M-1} \displaystyle \inf\!\bigg( \bigg\{ \P\Big(\Big\{\left|\left< e_0,P_MS_M\big(v+\tfrac{T}{M}\!\left(\textstyle\sum_{k=0}^{q}a_k[v]^k\right)+Z_2^M \big)\right>_H\right| \geq |c_{M,r}|^{\nicefrac{1}{r}} \left|\left< e_0, v\right>_H\right|^{q}\Big\} 
		\\&\qquad \qquad \qquad \qquad \cap 
		\Big\{   P_MS_M\big(v+\tfrac{T}{M}\!\left(\textstyle\sum_{k=0}^{q}a_k[v]^k\right)+Z_2^M \big)         \in \mathbb{H}_{n,r}^{M}    \Big\}         \Big)\\
		& \qquad \qquad \qquad \qquad \qquad \colon \Big( v \in \mathbb{H}_{n-1,r}^{M}  \colon |\langle e_0, v\rangle_H |\geq |\theta_{M,r}|^{\nicefrac{(q^{(n-1)})}{r}}  \Big) \bigg\} \cup \{1\} \bigg)
		\bigg].
		\end{align*}
		Next observe that Lemma \ref{C} (with $p=pq$, $\chi=\chi$ in the notation of Lemma \ref{C}) ensures that for all 
		 $v \in H_\chi$ it holds that $[v]^q 	\in
		 L^p(\lambda; \R)$. This proves that for all
		 $M \in \{2,3,\dots\}$, $v \in H_\chi$ it holds that
		\begin{equation}
		\label{eq:poly.reg}
		v+\tfrac{T}{M}\textstyle\sum_{k=0}^{q}a_k[v]^k
		\in
		L^p(\lambda; \R)
		.
		\end{equation}
		Furthermore, note that for all  $M \in \{2,3,\dots\}$, $v \in H_\chi$ it holds that 
		\begin{equation}
		\begin{split}
		&\left< e_0,P_MS_M\big(v+\tfrac{T}{M}\!\left(\textstyle\sum_{k=0}^{q}a_k[v]^k\right)+Z_2^M \big)\right>_H
		\\&=\left< e_0,P_MS_M\big(v+\tfrac{T}{M}\textstyle\sum_{k=0}^{q}a_k[v]^k\big)+P_MS_MZ_2^M \right>_H
		\\&=\left< e_0,P_MS_M\big(v+\tfrac{T}{M}\textstyle\sum_{k=0}^{q}a_k[v]^k\big)\right>_H+\left<e_0, P_MS_MZ_2^M\right>_H
		\\&=\left< P_M(e_0),S_M\big(v+\tfrac{T}{M}\textstyle\sum_{k=0}^{q}a_k[v]^k\big)\right>_H+\left<e_0, P_MS_MZ_2^M \right>_H.
		\end{split}
		\end{equation}
		This, \eqref{eq:smoothing},  and \eqref{eq:poly.reg} imply for all  $M \in \{2,3,\dots\}$, $v \in H_\chi$ that 
		\begin{equation}
		\label{prima}
		\begin{split}
		&\left< e_0,P_MS_M\big(v+\tfrac{T}{M}\!\left(\textstyle\sum_{k=0}^{q}a_k[v]^k\right)+Z_2^M \big)\right>_H
		\\&=\left< e_0,S_M\big(v+\tfrac{T}{M}\textstyle\sum_{k=0}^{q}a_k[v]^k\big)\right>_H+\left<e_0, P_MS_MZ_2^M \right>_H
		\\&=\left< S_M(e_0),v+\tfrac{T}{M}\textstyle\sum_{k=0}^{q}a_k[v]^k\right>_H+\left<e_0, P_MS_MZ_2^M \right>_H
		\\&=\left< e_0,v+\tfrac{T}{M}\textstyle\sum_{k=0}^{q}a_k[v]^k\right>_H+\left<e_0, P_MS_MZ_2^M \right>_H.
		\end{split}
		\end{equation}
		Next observe that \eqref{eq:generator.commute} 
		and the fact that $\forall \, u\in H_{-\nu},\, N \in \N,\, n \in \Z \colon P_N(u-v_n^u)\in H_\nu $ ensure for all $u\in H_{-\nu}$, $N \in \N$, $n \in \Z$ that 
		\begin{equation}
		\begin{split}
		S_NP_Nu-S_NP_Nv_n^u&=S_NP_N(u-v_n^u)
		\\&=S_N(\eta-A)^{-\nu} (\eta-A)^{\nu}P_N(u-v_n^u) 
		\\&= (\eta-A)^{-\nu} S_N(\eta-A)^{\nu}P_N(u-v_n^u).
		\end{split}
		\end{equation}
		The fact that $\forall \, N \in \N \colon (\eta-A)^{-\nu} S_N \in L(H_{-\nu},H)$ therefore assures for all $ u\in H_{-\nu}$, $N \in \N$, $n \in \Z$ that 
		\begin{equation}
		\begin{split}
		\|S_NP_Nu-S_NP_Nv_n^u\|_H&\leq \left\| (\eta-A)^{-\nu} S_N\right\|_{L(H_{-\nu},H)}\left\| (\eta-A)^{\nu}P_N(u-v_n^u) \right\|_{H_{-\nu}}
		\\&=\left\| (\eta-A)^{-\nu} S_N\right\|_{L(H_{-\nu},H)}\left\| P_N(u-v_n^u) \right\|_{H}.
		\end{split}
		\end{equation}
		This, the fact that $\forall \, N \in \N \colon P_N\in L(H_{-1},H_1)$,  and the fact that $L(H_{-1},H_{1})\subseteq L(H_{-1},H)$ prove for all $u\in H_{-\nu}$, $N \in \N$, $n \in \Z$ that 
		\begin{equation}
		\|S_NP_Nu-S_NP_Nv_n^u\|_H\leq \left\| (\eta-A)^{-\nu} S_N\right\|_{L(H_{-\nu},H)}\| P_N\|_{  L(H_{-1},H)   } \left\| u-v_n^u\right\|_{H_{-1}}.
		\end{equation}
		Combining this with \eqref{limsupseq} and the fact that $H_{-\nu}\subseteq H_{-1}$ continuously establishes for all $u\in H_{-\nu}$, $N \in \N$ that 
		\begin{equation}
		\label{lhs1}
		\limsup_{n\to \infty} \|S_NP_Nu-S_NP_Nv_n^u\|_H=0.
		\end{equation}
		In addition, observe that it holds for all $u\in H_{-\nu}$, $N \in \N$, $n \in \Z$ that 
		\begin{equation}
		\begin{split}
		&\left\| P_NS_N u-P_NS_N v_n^u  \right\|_H=\left\| P_NS_N( u-v_n^u  )  \right\|_H
		\\&\leq \|P_N\|_{  L(H_{-1},H)   } \left\| S_N ( u-v_n^u   )  \right\|_{H_{-1}}
		\\&\leq \left[\sup_{w \in H_{-\nu}\backslash\{0\}  } \tfrac{  \| w \|_{H_{-1}}      }{\| w \|_{H_{-\nu}}} \right]
		\|P_N\|_{  L(H_{-1},H)   } \left\| S_N ( u-v_n^u   )  \right\|_{H_{-\nu}}
		\\&\leq \left[\sup_{w \in H_{-\nu}\backslash\{0\}  } \tfrac{  \| w \|_{H_{-1}}      }{\| w \|_{H_{-\nu}}} \right]\|P_N\|_{  L(H_{-1},H)   } 
		\|S_N\|_{L(H_{-\nu})} \left\|  u-v_n^u  \right\|_{H_{-\nu}}.
		\end{split}
		\end{equation}
		Combining this with \eqref{limsupseq} and the fact that $H_{-\nu}\subseteq H_{-1}$ continuously proves for all $u\in H_{-\nu}$, $N \in \N$ that 
		\begin{equation}
		\label{rhs1}
		\limsup_{n\to \infty}\left\| P_NS_N u-P_NS_N v_n^u    \right\|_H=0. 
		\end{equation}
		Moreover, note that \eqref{eq:generator.commute} assures for all $u\in H_{-\nu}$, $N\in\N$, $n\in\N$ that 
		\begin{equation}
		\label{eq:project.commute.approx}
		S_NP_Nv^N_u=P_NS_Nv^N_u
		.
		\end{equation}
		Combining this, \eqref{lhs1}, and \eqref{rhs1} establishes for all $u\in H_{-\nu}$, $N \in \N$ that 
		\begin{equation}
		\label{comm}
		S_NP_Nu=P_NS_Nu.
		\end{equation}
		This and \eqref{prima}  ensure for all  $M \in \{2,3,\dots\}$, $v \in H_\chi$ that 
		\begin{equation}
		\begin{split}
		&\left< e_0,P_MS_M\big(v+\tfrac{T}{M}\!\left(\textstyle\sum_{k=0}^{q}a_k[v]^k\right)+Z_2^M \big)\right>_H
		\\&=\left< e_0,v+\tfrac{T}{M}\textstyle\sum_{k=0}^{q}a_k[v]^k\right>_H+\left<e_0, S_MP_MZ_2^M \right>_H.
		\end{split}
		\end{equation}
		Combining this with the reverse triangle inequality  proves for all  $M \in \{2,3,\dots\}$, $v \in H_\chi$ that 
		\begin{align*}
		& \left|\left< e_0,P_MS_M\big(v+\tfrac{T}{M}\!\left(\textstyle\sum_{k=0}^{q}a_k[v]^k\right)+Z_2^M \big)\right>_H\right|
		\\&\geq \left|\left< e_0,v+\tfrac{T}{M}\textstyle\sum_{k=0}^{q}a_k[v]^k\right>_H\right| - \left| \left<e_0, S_MP_MZ_2^M\right>_H\right| \numberthis
		\\&\geq 
		\big|\big< e_0,\tfrac{T}{M}a_q[v]^q\big>_H\big|- \big|\big< e_0,v+\tfrac{T}{M}\textstyle\sum_{k=0}^{q-1}a_k[v]^k\big>_H\big|-\left|\left<e_0,S_MP_MZ_2^M\right>_H\right|
		\\&=
		\tfrac{T|a_q|}{M}\big|\big< e_0,[v]^q\big>_H\big|- \big| \big< e_0,v\big>_H+\tfrac{T}{M}\textstyle\sum_{k=0}^{q-1}a_k\big< e_0,[v]^k\big>_H \big|-\left|\left<e_0,S_MP_MZ_2^M\right>_H\right|.
		\end{align*}
		This and the triangle inequality imply for all  $M \in \{2,3,\dots\}$, $v \in H_\chi$ that 
		\begin{align*}
		\label{poli4G}
		&\left|\left< e_0,P_MS_M\big(v+\tfrac{T}{M}\!\left(\textstyle\sum_{k=0}^{q}a_k[v]^k\right)+Z_2^M \big)\right>_H\right| \numberthis
		\\&\geq \tfrac{T|a_q|}{M}\big|\big< e_0,[v]^q\big>_H\big|- \big|\tfrac{T}{M}\textstyle\sum_{k=0}^{q-1}a_k\big< e_0,[v]^k\big>_H\big|-\left|\left<e_0,v\right>_H\right|-\left|\left<e_0,S_MP_MZ_2^M\right>_H\right|
		\\&\geq \tfrac{T|a_q|}{M}\big|\big< e_0,[v]^q\big>_H\big|- \tfrac{T}{M}\textstyle\sum_{k=0}^{q-1}|a_k|\big|\big< e_0,[v]^k\big>_H\big|-\left|\left<e_0,v\right>_H\right|-\left|\left<e_0,S_MP_MZ_2^M\right>_H\right|.
		\end{align*}
		Moreover, observe that it holds for all $v \in H_{\chi}$ that 
		\begin{equation}
		[v]^q=    ( P_0(v)+\mathcal{R}[v] )^q=\textstyle\sum_{m=0}^{q} \binom{q}{m}(P_0(v))^{(q-m)}(\mathcal{R}[v])^m.
		\end{equation}
		This proves for all $v \in H_\chi$ that 
		\begin{equation}
		\label{poli17G}
		\begin{split}
		&\big|\big< e_0,[v]^q\big>_H\big|=\left|\left< e_0,\textstyle\sum_{m=0}^{q} \binom{q}{m}(P_0(v))^{(q-m)}(\mathcal{R}[v])^m\right>_H\right|
		\\&=\left|\left< e_0,(P_0(v))^q+\textstyle\sum_{m=1}^{q} \binom{q}{m}(P_0(v))^{(q-m)}(\mathcal{R}[v])^m\right>_H\right|
		\\&\geq \left|\left< e_0,(P_0(v))^{q}\right>_H\right|-\left|\left< e_0,\textstyle\sum_{m=1}^{q} \binom{q}{m}(P_0(v))^{(q-m)}(\mathcal{R}[v])^m\right>_H\right|
		\\&\geq  \left|\left< e_0,(P_0(v))^{q}\right>_H\right|-\textstyle\sum_{m=1}^{q} \binom{q}{m}\left|\left< e_0,(P_0(v))^{(q-m)}(\mathcal{R}[v])^m\right>_H\right|.
		\end{split}
		\end{equation}
		Next note that it holds for all $v \in H_\chi$ that 
		\begin{equation}
		\label{poli3G}
		\begin{split}
		&\big|\big< e_0,(P_0(v))^q\big>_H\big|=\big|\big< e_0,\big(\!\left<e_0,v\right>_H e_0\big)^q\big>_H\big|
		\\&=\big|\big< e_0,\big(\langle e_0,v\rangle_H \big)^q (e_0)^q\big>_H\big|
		=\big|\big< e_0,\big(\langle e_0,v\rangle_H\big)^q e_0\big>_H\big|
		\\&=\big|\big(\langle e_0,v\rangle_H\big)^q\left<e_0,e_0\right>_H\big| =
		\big|\langle e_0,v\rangle_H\big|^q.
		\end{split}
		\end{equation}
		Furthermore, observe that it holds for all  $v \in H_\chi$, $k \in \{0,1,\dots,q\}$, $m \in \{0,1,\dots,k\}$ that 
		\begin{equation}
		\label{355}
		\begin{split}
		&\left|\left< e_0,(P_0(v))^{(k-m)}(\mathcal{R}[v])^m\right>_H\right|
		\leq \left< e_0,\left|(P_0(v))^{(k-m)}(\mathcal{R}[v])^m\right|\right>_H
		\\&=\left< e_0,\left|(\left<e_0,v\right>_He_0)^{(k-m)}(\mathcal{R}[v])^m\right|\right>_H
		=\left< e_0,\left|(\left<e_0,v\right>_H)^{(k-m)}(\mathcal{R}[v])^m\right|\right>_H
		\\&=\left|\left<e_0,v\right>_H\right|^{(k-m)}\left< e_0,\left|(\mathcal{R}[v])^m\right|\right>_H
		=\left|\left<e_0,v\right>_H\right|^{(k-m)}\left< e_0,\left|\mathcal{R}[v]\right|^m\right>_H
		\\&=\left|\left<e_0,v\right>_H\right|^{(k-m)} \|\mathcal{R}[v]\|_{L^{m}(\lambda; \R)}^m
		\leq \left|\left<e_0,v\right>_H\right|^{(k-m)}\|\mathcal{R}[v]\|_{L^{2q}(\lambda; \R)}^m.
		\end{split}
		\end{equation}
		This, \eqref{eq:H.set}, and the fact that $2q\leq p$ prove for all $r\in  (0,\infty)$, $M \in \{2,3,\dots\}$, $n \in \{1,2,\dots,M\}$, $v \in \mathbb{H}_{n-1,r}^{M}$   that 
		\begin{equation}
		\begin{split}
		&\textstyle\sum_{m=1}^{q} \binom{q}{m}\left|\left< e_0,(P_0(v))^{(q-m)}(\mathcal{R}[v])^m\right>_H\right|
		\\&\leq \textstyle\sum_{m=1}^{q} \binom{q}{m}\left|\left<e_0,v\right>_H\right|^{(q-m)}\|\mathcal{R}[v]\|_{L^{p}(\lambda; \R)}^m
		\\&\leq \textstyle\sum_{m=1}^{q} \binom{q}{m}\tfrac{1}{2^m}|\rho_{M,r}|^{m(n-1-M)} \left|\left<e_0,v\right>_H\right|^{(q-m)}\left|\left<e_0,v\right>_H\right|^m
		\\&= \textstyle\sum_{m=1}^{q} \binom{q}{m} \tfrac{1}{2^m}|\rho_{M,r}|^{m(n-1-M)}\left|\left<e_0,v\right>_H\right|^{q}.
		\end{split}
		\end{equation}
		Hence, we obtain for all $r \in (0,\infty)$, $M \in \{2,3,\dots\}$, $n \in \{1,2,\dots,M\}$, $v \in \mathbb{H}_{n-1,r}^{M}$   that 
		\begin{equation}
		\label{poli2G}
		\begin{split}
		&\textstyle\sum_{m=1}^{q} \binom{q}{m}\left|\left< e_0,(P_0(v))^{(q-m)}(\mathcal{R}[v])^m\right>_H\right|
		\\&\leq \left|\left<e_0,v\right>_H\right|^{q} \textstyle\sum_{m=1}^{q} \binom{q}{m}\!\left(\tfrac{1}{2}\right)^m\left(|\rho_{M,r}|^{(n-1-M)}\right)^m
		\\&=\left|\left<e_0,v\right>_H\right|^{q} \textstyle\sum_{m=1}^{q} \binom{q}{m}\!\left( \tfrac{1}{2} |\rho_{M,r}|^{(n-1-M)} \right)^m \cdot 1^{(q-m)}
		\\&=\left|\left<e_0,v\right>_H\right|^{q} \Big[\textstyle\sum_{m=0}^{q} \binom{q}{m}\!\left( \tfrac{1}{2} |\rho_{M,r}|^{(n-1-M)} \right)^m \cdot 1^{(q-m)}-1\Big]
		\\&=\left|\left<e_0,v\right>_H\right|^{q} \left[\left(\tfrac{1}{2} |\rho_{M,r}|^{(n-1-M)} +1\right)^q-1\right].
		\end{split}
		\end{equation}
		Combining this with \eqref{poli17G} and \eqref{poli3G} establishes for all $r \in (0,\infty)$, $M \in \{2,3,\dots\}$, $n \in \{1,2,\dots,M\}$, $v \in \mathbb{H}_{n-1,r}^{M}$   that 
		\begin{equation}
		\label{nrpoliG}
		\begin{split}
		\big|\big< e_0,[v]^q\big>_H\big| &\geq \left|\left<e_0,v\right>_H\right|^{q}-\textstyle\sum_{m=1}^{q} \binom{q}{m}\left|\left< e_0,(P_0(v))^{(q-m)}(\mathcal{R}[v])^m\right>_H\right|
		\\&\geq \left|\left<e_0,v\right>_H\right|^{q}-\left|\left<e_0,v\right>_H\right|^{q} \left[\left(\tfrac{1}{2} |\rho_{M,r}|^{(n-1-M)} +1\right)^q-1\right]
		\\&=\left[2-\left(\tfrac{1}{2} |\rho_{M,r}|^{(n-1-M)} +1\right)^q\right]\left|\left<e_0,v\right>_H\right|^{q}.
		\end{split}
		\end{equation}
		Next observe that for all $v \in H_\chi$, $k \in \{0, 1,\dots,q-1\}$  it holds that 
		\begin{equation}
		\begin{split}
		\big|\big< e_0,[v]^k\big>_H\big|&= \big|\big< e_0, (P_0(v)+\mathcal{R}[v])^k \big>_H \big|\\
		&= \left|\left< e_0,\textstyle\sum_{m=0}^{k} \binom{k}{m}(P_0(v))^{(k-m)}(\mathcal{R}[v])^m\right>_H\right|
		\\&=\left|\textstyle\sum_{m=0}^{k}\left< e_0, \binom{k}{m}(P_0(v))^{(k-m)}(\mathcal{R}[v])^m\right>_H\right|
		\\&\leq \textstyle\sum_{m=0}^{k}\left|\left< e_0, \binom{k}{m}(P_0(v))^{(k-m)}(\mathcal{R}[v])^m\right>_H\right|.
		\end{split}
		\end{equation}
		This, \eqref{eq:H.set}, and \eqref{355} prove for all $r \in (0,\infty)$, $M \in \{2,3,\dots\}$, $n \in \{1,2,\dots,M\}$, $v \in \mathbb{H}_{n-1,r}^{M}$, $k \in \{0, 1,\dots,q-1\}$  that 
		\begin{equation}
		\begin{split}
		\big|\big< e_0,[v]^k\big>_H\big|
		&\leq \textstyle\sum_{m=0}^{k} \binom{k}{m} \left|\left<e_0,v\right>_H\right|^{(k-m)}\|\mathcal{R}[v]\|_{L^{2q}(\lambda; \R)}^m
		\\&\leq \textstyle\sum_{m=0}^{k} \binom{k}{m} \left|\left<e_0,v\right>_H\right|^{(k-m)}\|\mathcal{R}[v]\|_{L^{p}(\lambda; \R)}^m
		\\&\leq  \textstyle\sum_{m=0}^{k} \binom{k}{m} \left|\left<e_0,v\right>_H\right|^{(k-m)}\left|\left<e_0,v\right>_H\right|^{m}
		\\&=\textstyle\sum_{m=0}^{k} \binom{k}{m} \left|\left<e_0,v\right>_H\right|^{k} = \left|\left<e_0,v\right>_H\right|^{k} \textstyle\sum_{m=0}^{k} \binom{k}{m} \\
		& = \left|\left<e_0,v\right>_H\right|^{k} \textstyle\sum_{m=0}^{k} \binom{k}{m} 1^{(k-m)} \cdot 1^m\\
		&= \left|\left<e_0,v\right>_H\right|^{k} (1+1)^k = \left|\left<e_0,v\right>_H\right|^{k} 2^k .
		\end{split}
		\end{equation}
		This implies for all $r \in (0,\infty)$, $M \in \{2,3,\dots\}$, $n \in \{1,2,\dots,M\}$, $v \in \mathbb{H}_{n-1,r}^{M}$, $k \in \{0,1,\dots,q-1\}$ with $ \left|\left< e_0, v\right>_H\right|> 1$ that 
		\begin{equation}
		\begin{split}
		&\big|\big< e_0,[v]^k\big>_H\big| \leq 2^{(q-1)}  \left|\left<e_0,v\right>_H\right|^{(q-1)}.
		\end{split}
		\end{equation}
		This and \eqref{eq:vartheta} ensure for all $r \in (0,\infty)$, $M \in \{2,3,\dots\}$, $n \in \{1,2,\dots,M\}$, $v \in \mathbb{H}_{n-1,r}^{M}$ with $ \left|\left< e_0, v\right>_H\right|>1$ that 
		\begin{equation}
		\begin{split}
		&\tfrac{T}{M}\textstyle\sum_{k=0}^{q-1}|a_k|\big|\big< e_0,[v]^k\big>_H\big|
		\leq \tfrac{T}{M}\textstyle\sum_{k=0}^{q-1}  2^{(q-1)} |a_k|\left|\left<e_0,v\right>_H\right|^{(q-1)}
		\\&\leq \tfrac{ 2^{(q-1)} T}{M}\textstyle\sum_{k=0}^{q-1}\max\{|a_0|,|a_1|,\dots, |a_{q-1}|\}  \left|\left<e_0,v\right>_H\right|^{(q-1)}
		\\&\leq \tfrac{ 2^{(q-1)} T}{M} \max\{|a_0|,|a_1|,\dots, |a_{q-1}|\}\left|\left<e_0,v\right>_H\right|^{(q-1)}
		\\&\leq \tfrac{T\vartheta}{M}\left|\left<e_0,v\right>_H\right|^{(q-1)}.
		\end{split}
		\end{equation}
		Combining this with  \eqref{poli4G}  establishes for all $r \in (0,\infty)$, $M \in \{2,3,\dots\}$, $n \in \{1,2,\dots,M\}$, $v \in \mathbb{H}_{n-1,r}^{M}$ with $ \left|\left< e_0, v\right>_H\right|> 1$ that 
		\begin{equation}
		\begin{split}
		&\big|\big< e_0,P_MS_M\big(v+\tfrac{T}{M}\!\left(\textstyle\sum_{k=0}^{q}a_k[v]^k\right)+Z_2^M\big)\big>_H\big|
		\\&\geq \tfrac{T|a_q|}{M}\big|\big< e_0,[v]^q\big>_H\big|-\tfrac{T\vartheta}{M}\left|\left<e_0,v\right>_H\right|^{(q-1)}-\left|\left<e_0,v\right>_H\right|-\left|\left<e_0,S_MP_MZ_2^M\right>_H\right|.
		\end{split}
		\end{equation}
		This and \eqref{nrpoliG}  prove for all $r \in (0,\infty)$, $M \in \{2,3,\dots\}$, $n \in \{1,2,\dots,M\}$, $v \in \mathbb{H}_{n-1,r}^{M}$ with $ \left|\left< e_0, v\right>_H\right|> 1$ that 
		\begin{equation}
		\begin{split}
		&\left|\left< e_0,P_MS_M\big(v+\tfrac{T}{M}\!\left(\textstyle\sum_{k=0}^{q}a_k[v]^k\right)+Z_2^M \big)\right>_H\right|
		\\&\geq \tfrac{T|a_q|}{M}\!\left[2-\left(\tfrac{1}{2} |\rho_{M,r}|^{(n-1-M)} +1\right)^q\right]\left|\left<e_0,v\right>_H\right|^q
		-\tfrac{T\vartheta}{M}\left|\left<e_0,v\right>_H\right|^{(q-1)}
		\\&\quad-\left|\left<e_0,v\right>_H\right|-\left|\left<e_0,S_MP_MZ_2^M\right>_H\right|.
		\end{split}
		\end{equation}
		Hence, we obtain for all $r \in (0,\infty)$, $M \in \{2,3,\dots\}$, $n \in \{1,2,\dots,M\}$, $v \in \mathbb{H}_{n-1,r}^{M}$ with $ \left|\left< e_0, v\right>_H\right|> 1$  that 
		\begin{equation}
		\begin{split}
		&\left|\left< e_0,P_MS_M\big(v+\tfrac{T}{M}\!\left(\textstyle\sum_{k=0}^{q}a_k[v]^k\right)+Z_2^M \big)\right>_H\right|
		\\&\geq  \tfrac{T|a_q|}{M}\!\left[2-\left(\tfrac{1}{2} |\rho_{M,r}|^{(n-1-M)} +1\right)^q\right]\left|\left<e_0,v\right>_H\right|^q
		- \tfrac{T\vartheta }{M}\left|\left<e_0,v\right>_H\right|^{(q-1)}
		\\&\quad -\left|\left<e_0,v\right>_H\right|-\|S_MP_MZ_2^M\|_H.
		\end{split}
		\end{equation}
		This establishes for all $r \in (0,\infty)$, $M \in \{2,3,\dots\}$, $n \in \{1,2,\dots,M\}$, $v \in \mathbb{H}_{n-1,r}^{M}$ with $ \left|\left< e_0, v\right>_H\right|> 1$  that 
		\begin{equation}
		\label{pincopalloG}
		\begin{split}
		&\left|\left< e_0,P_MS_M\big(v+\tfrac{T}{M}\!\left(\textstyle\sum_{k=0}^{q}a_k[v]^k\right)+Z_2^M \big)\right>_H\right|
		\\&\geq \tfrac{T|a_q|}{M}\!\left[2-\left(\tfrac{1}{2} |\rho_{M,r}|^{(n-1-M)} +1\right)^q\right]\left|\left<e_0,v\right>_H\right|^{q} 
		\\&\quad 	- \tfrac{T\vartheta}{M}\left|\left<e_0,v\right>_H\right|^{(q-1)}  -\left|\left<e_0,v\right>_H\right|^{(q-1)}-\|S_MP_MZ_2^M\|_H.
		\end{split}
		\end{equation}
		Next note that the fact that  $\forall \,  r \in (0,\infty), N \in \N \colon\rho_{N,r} \geq\tfrac{1}{2^{\nicefrac{1}{q}}-1}$ ensures for all $r \in (0,\infty)$, $M \in \{2,3,\dots\}$, $n \in \{1,2,\dots,M\}$ that 
		\begin{equation}
		\label{stimarad}
		\begin{split}
		&2-\left(\tfrac{1}{2} |\rho_{M,r}|^{(n-1-M)} +1\right)^q \geq 2-\left(\tfrac{1}{2} |\rho_{M,r}|^{-1} +1\right)^q
		\geq 2-\left(\tfrac{1}{2} \left(2^{\nicefrac{1}{q}}-1\right) +1\right)^q
		\\&=2-\left(\tfrac{1}{2} \cdot 2^{\nicefrac{1}{q}}+\tfrac{1}{2}  \right)^q
		=2-\left(2^{(\nicefrac{1}{q})-1}+\tfrac{1}{2}  \right)^q=2-\left(2^{\nicefrac{(1-q)}{q}}+\tfrac{1}{2}  \right)^q.
		\end{split}
		\end{equation}
		Moreover, observe that the fact that $\forall \, x, y \in \R \colon |x+y|^q \leq 2^{(q-1)} (|x|^q + |y|^q)$ assures that 
		\begin{equation}
		\begin{split}
		&\left(2^{\nicefrac{(1-q)}{q}}+\tfrac{1}{2}  \right)^q \leq 2^{(q-1)} (2^{(1-q)} + 2^{-q})= 1+ \tfrac{1}{2} =\tfrac{3}{2}.
		\end{split}
		\end{equation}
		This and \eqref{stimarad}  establish for all $r \in (0,\infty)$, $M \in \{2,3,\dots\}$, $n \in \{1,2,\dots,M\}$ that 
		\begin{equation}
		\begin{split}
		&2-\left(\tfrac{1}{2} |\rho_{M,r}|^{(n-1-M)} +1\right)^q 
		\geq 2-\tfrac{3}{2}=\tfrac{1}{2}.
		\end{split}
		\end{equation}
		Combining this with \eqref{pincopalloG} proves for all $r \in (0,\infty)$, $M \in \{2,3,\dots\}$, $n \in \{1,2,\dots,M\}$, $v \in \mathbb{H}_{n-1,r}^{M}$ with $ \left|\left< e_0, v\right>_H\right|> 1$  that 
		\begin{equation}
		\begin{split}
		&\left|\left< e_0,P_MS_M\big(v+\tfrac{T}{M}\!\left(\textstyle\sum_{k=0}^{q}a_k[v]^k\right)+Z_2^M \big)\right>_H\right|
		\\&\geq \tfrac{T|a_q|}{2M}\left|\left<e_0,v\right>_H\right|^{q} - \tfrac{T\vartheta}{M}\left|\left<e_0,v\right>_H\right|^{(q-1)}  -\left|\left<e_0,v\right>_H\right|^{(q-1)}-\|S_MP_MZ_2^M\|_H
		\\&= \tfrac{T|a_q|}{2M}\left|\left<e_0,v\right>_H\right|^{q} 	- \tfrac{T\vartheta+M}{M}\left|\left<e_0,v\right>_H\right|^{(q-1)} -\|S_MP_MZ_2^M\|_H.
		\end{split}
		\end{equation}
		Hence, we obtain for all $r \in (0,\infty)$, $M \in \{2,3,\dots\}$, $n \in \{1,2,\dots,M\}$, $v \in \mathbb{H}_{n-1,r}^{M}$ with $ \left|\left< e_0, v\right>_H\right|> 1$ that 
		\begin{align*}
		&\P\Big(   \left|\left< e_0,P_MS_M\big(v+\tfrac{T}{M}\!\left(\textstyle\sum_{k=0}^{q}a_k[v]^k\right)+Z_2^M \big)\right>_H\right| \geq |c_{M,r}|^{\nicefrac{1}{r}}\left|\left< e_0, v\right>_H\right|^{q}   \Big)
		\\&\geq \P\Big(\tfrac{T|a_q|}{2M}\left|\left<e_0,v\right>_H\right|^{q} 	- \tfrac{T\vartheta+M}{M}\left|\left<e_0,v\right>_H\right|^{(q-1)}  -\|S_MP_MZ_2^M\|_H\geq |c_{M,r}|^{\nicefrac{1}{r}} \left|\left< e_0, v\right>_H\right|^{q}\Big) 
		\\&\geq \P\Big(\Big\{\tfrac{T|a_q|}{2M}\left|\left<e_0,v\right>_H\right|^{q} 	- \tfrac{T\vartheta+M}{M}\left|\left<e_0,v\right>_H\right|^{(q-1)} \\&\qquad  \quad -\|S_MP_MZ_2^M\|_H\geq |c_{M,r}|^{\nicefrac{1}{r}}\left|\left< e_0, v\right>_H\right|^{q}\Big\}\cap \big\{ \|S_MP_MZ_2^M\|_H\leq 1 \big\}\Big). \numberthis 
		\end{align*}
		This implies for all $r \in (0,\infty)$, $M \in \{2,3,\dots\}$, $n \in \{1,2,\dots,M\}$, $v \in \mathbb{H}_{n-1,r}^{M}$ with $ \left|\left< e_0, v\right>_H\right|> 1$ that 
		\begin{equation}
		\label{poli10G}
		\begin{split}
		&\P\Big(   \left|\left< e_0,P_MS_M\big(v+\tfrac{T}{M}\!\left(\textstyle\sum_{k=0}^{q}a_k[v]^k\right)+Z_2^M \big)\right>_H\right| \geq |c_{M,r}|^{\nicefrac{1}{r}}\left|\left< e_0, v\right>_H\right|^{q}   \Big)
		\\&\geq \P\Big(\tfrac{T|a_q|}{2M}\left|\left<e_0,v\right>_H\right|^{q} 	- \tfrac{T\vartheta+M}{M}\left|\left<e_0,v\right>_H\right|^{(q-1)} -1\geq |c_{M,r}|^{\nicefrac{1}{r}}\left|\left< e_0, v\right>_H\right|^{q}\Big)
		\\&
		\quad \cdot \P\Big( \|S_MP_MZ_2^M\|_H\leq 1 \Big) .
		\end{split}
		\end{equation}
		Moreover, note that it holds for all $M \in \{2,3,\dots\}$, $v \in H$ with $ \left|\left< e_0, v\right>_H\right|> 1$ that 
		\begin{equation}
		\label{poli6G}
		\begin{split}
		&\tfrac{T|a_q|}{2M}\left|\left<e_0,v\right>_H\right|^{q} 	- \tfrac{T\vartheta+M}{M}\left|\left<e_0,v\right>_H\right|^{(q-1)} -1
		\\&\geq \tfrac{T|a_q|}{2M}\left|\left<e_0,v\right>_H\right|^{q} 	- \tfrac{T\vartheta+M}{M}\left|\left<e_0,v\right>_H\right|^{(q-1)} -\left|\left<e_0,v\right>_H\right|^{(q-1)} 
		\\&= \tfrac{T|a_q|}{2M}\left|\left<e_0,v\right>_H\right|^{q} 	-\tfrac{T\vartheta+2M}{M}\left|\left<e_0,v\right>_H\right|^{(q-1)} 
		\\&=\tfrac{T|a_q|}{4M}\left|\left<e_0,v\right>_H\right|^{q} +\tfrac{T|a_q|}{4M}\left|\left<e_0,v\right>_H\right|^{q} 	-\tfrac{T\vartheta+2M}{M}\left|\left<e_0,v\right>_H\right|^{(q-1)}. 
		\end{split}
		\end{equation}
		Next observe that it holds for all  $M \in \{2,3,\dots\}$, $v \in H$ with  
		$ \left|\left< e_0, v\right>_H\right|> 1$ 
		that 
		\begin{equation}
		\begin{split}
		&\tfrac{T|a_q|}{4M}\left|\left<e_0,v\right>_H\right|^{q} 	-\tfrac{T\vartheta+2M}{M}\left|\left<e_0,v\right>_H\right|^{(q-1)}  \geq 0
		\\&\Leftrightarrow \tfrac{T|a_q|}{4M}\left|\left<e_0,v\right>_H\right| \geq \tfrac{T\vartheta+2M}{M}
		\\&\Leftrightarrow \left|\left<e_0,v\right>_H\right|\geq \tfrac{4T\vartheta +8M}{T|a_q|}.
		\end{split}
		\end{equation}
		The fact that $\forall \, r \in (0,\infty), M \in \{2,3,\dots\}, n \in \{1,2,\dots,M\}$, $v \in H$ with $ \left|\left< e_0, v\right>_H\right|\geq |\theta_{M,r}|^{\nicefrac{(q^{(n-1)})}{r}} \colon \left|\left< e_0, v\right>_H\right|\geq |\theta_{M,r}|^{\nicefrac{1}{r}}$ and \eqref{eq:theta} therefore assure for all $r \in (0,\infty)$, $M \in \{2,3,\dots\}$, $n \in \{1,2,\dots,M\}$, $v \in H$ with $ \left|\left< e_0, v\right>_H\right|\geq |\theta_{M,r}|^{\nicefrac{(q^{(n-1)})}{r}}$ that  
		\begin{equation}
		\tfrac{T|a_q|}{4M}\left|\left<e_0,v\right>_H\right|^{q} 	-\tfrac{T\vartheta+2M}{M}\left|\left<e_0,v\right>_H\right|^{(q-1)}  \geq 0.
		\end{equation}
		Combining this with \eqref{poli6G} proves for all $ r \in (0,\infty)$, $M \in \{2,3,\dots\}$, $n \in \{1,2,\dots,M\}$, $v \in H$ with $ \left|\left< e_0, v\right>_H\right|\geq |\theta_{M,r}|^{\nicefrac{(q^{(n-1)})}{r}}$ that 
		\begin{equation}
		\label{pincopallo1G}
		\begin{split}
		&\tfrac{T|a_q|}{2M}\left|\left<e_0,v\right>_H\right|^{q} 	- \tfrac{T\vartheta+M}{M}\left|\left<e_0,v\right>_H\right|^{(q-1)} -1
		\geq \tfrac{T|a_q|}{4M}\left|\left<e_0,v\right>_H\right|^{q} \\&\geq \min\!\left\{\tfrac{T|a_q|}{4M}, 1\right\}\left|\left<e_0,v\right>_H\right|^{q}
		=\min\!\left\{\left[\tfrac{T|a_q|}{4M}\right]^r, 1\right\}^{\nicefrac{1}{r}}\left|\left<e_0,v\right>_H\right|^{q}
		\\&=|c_{M,r}|^{\nicefrac{1}{r}}\left|\left<e_0,v\right>_H\right|^{q}.
		\end{split}
		\end{equation}
		This establishes for all $r \in (0,\infty)$, $M \in \{2,3,\dots\}$, $n \in \{1,2,\dots,M\}$, $v \in H$ with $ \left|\left< e_0, v\right>_H\right|\geq |\theta_{M,r}|^{\nicefrac{(q^{(n-1)})}{r}}$ that 
		\begin{equation}
		\label{poli11G}
		\tfrac{T|a_q|}{2M}\left|\left<e_0,v\right>_H\right|^{q} 	- \tfrac{T\vartheta+M}{M}\left|\left<e_0,v\right>_H\right|^{(q-1)} -1\geq|c_{M,r}|^{\nicefrac{1}{r}}\left|\left< e_0, v\right>_H\right|^{q}.
		\end{equation}
		Next observe that for all $M \in \{2,3,\dots\}$ it holds that 
		\begin{equation}
		\begin{split}
		&\P\big(  \|S_MP_MZ_2^M\|_H\leq  1  \big) \geq \P\big(  \|S_MP_MZ_2^M\|_{L^p(\lambda; \R)}\leq  1  \big) .
		\end{split}
		\end{equation}
		Corollary \ref{stima1cor}  (with $(\Omega,\mathcal{F}, \mathbb{P})=(\Omega,\mathcal{F}, \mathbb{P})$, $T=T$,  $x=1$, $\gamma=\gamma_M$, $y=y_M$, $p=p$, $\delta=\zeta_{\nu+s}$, $\nu=\nu$, $s=s$,   $N=M$, $v=0$,  $S=(H \ni w \mapsto S_M w \in H_{\nu +s})$, $W=W$ for  $M \in \{2,3,\dots\}$ in the notation of Corollary \ref{stima1cor})  therefore establishes for all  $M \in \{2,3,\dots\}$  that 
		\begin{equation}
		\label{poli5G}
		\begin{split}
		&\P\big(  \|S_MP_MZ_2^M\|_{H}\leq 1   \big)
		\geq \P\big(  \big\|S_MP_M(W_{\nicefrac{2T}{M}}-W_{\nicefrac{T}{M}})\big\|_{L^p(\lambda; \R)}\leq  1  \big)
		\\&= \P\big(  \big\|S_MP_M(W_{\nicefrac{T}{M}})\big\|_{L^p(\lambda; \R)}\leq  1  \big)
		\geq\left[\tfrac{y_M}{\sqrt{2 \pi \gamma_{M} T}}\right]^{(2M+1)}\exp\!\left( -\tfrac{3M^2|y_M|^2}{\gamma_{M}T}\right).
		\end{split}
		\end{equation}
		Combining this with \eqref{poli10G} and \eqref{poli11G} assures for all $r \in (0,\infty)$, $M \in \{2,3,\dots\}$, $n \in \{1,2,\dots,M\}$, $v \in \mathbb{H}_{n-1,r}^{M}$ with $ \left|\left< e_0, v\right>_H\right|\geq |\theta_{M,r}|^{\nicefrac{(q^{(n-1)})}{r}}$  that 
		\begin{equation}
		\label{nr1poliG}
		\begin{split}
		&\P\Big(   \left|\left< e_0,P_MS_M\big(v+\tfrac{T}{M}\!\left(\textstyle\sum_{k=0}^{q}a_k[v]^k\right)+Z_2^M \big)\right>_H\right| \geq |c_{M,r}|^{\nicefrac{1}{r}}\left|\left< e_0, v\right>_H\right|^{q}   \Big)
		\\&\geq \left[\tfrac{y_M}{\sqrt{2 \pi \gamma_{M}T}}\right]^{(2M+1)}\exp\!\left( -\tfrac{3M^2|y_M|^2}{\gamma_{M}T}\right).
		\end{split}
		\end{equation}
		Next note that the triangle inequality and \eqref{eq:generator.commute} establish for all   $M \in \{2,3,\dots\}$, $v \in H_\chi$   that 
		\begin{equation}
		\label{ghpoliG0}
		\begin{split}
		&\Big\| \mathcal{R}\Big[P_MS_M \big(v+\tfrac{T}{M}\!\left(\textstyle\sum_{k=0}^{q}a_k[v]^k\right)+Z_2^M\big) \Big]  \Big\|_{L^p(\lambda; \R)}
		\\&=\Big\| \mathcal{R}\Big[P_MS_M \big(v+\tfrac{T}{M}\textstyle \sum_{k=0}^{q}a_k[v]^k\big)+P_MS_MZ_2^M\Big]  \Big\|_{L^p(\lambda; \R)}
		\\&\leq \Big\| \mathcal{R}\Big[P_MS_M \big(v+\tfrac{T}{M}\textstyle \sum_{k=0}^{q}a_k[v]^k\big)\Big]\Big\|_{L^p(\lambda; \R)} + \Big\| \mathcal{R}\Big[P_MS_MZ_2^M\Big]\Big\|_{L^p(\lambda; \R)}\\
		& = \Big\| P_M\mathcal{R}\!\left[S_M \big(v+\tfrac{T}{M}\textstyle \sum_{k=0}^{q}a_k[v]^k\big)\right]\!\Big\|_{L^p(\lambda; \R)} + \Big\| \mathcal{R}\Big[S_MP_MZ_2^M\Big]\Big\|_{L^p(\lambda; \R)}.
		\end{split}
		\end{equation}
		Moreover, observe that the triangle inequality proves for all $ v \in L^p(\lambda; \R)$ that
		\begin{equation}
		\label{eq:bound:mathcalR}
		\begin{split}
		\| \mathcal{R}[v] \|_{L^p(\lambda; \R)} &=  \|v - \langle e_0, v \rangle_H \, e_0 \|_{L^p(\lambda; \R)} \leq \|v\|_{L^p(\lambda; \R)} + \| \langle e_0, v \rangle_H \, e_0 \|_{L^p(\lambda; \R)} \\
		&= \|v\|_{L^p(\lambda; \R)} + |  \langle e_0, v \rangle_H| \leq \|v\|_{L^p(\lambda; \R)} + \|v\|_H \leq 2 \|v\|_{L^p(\lambda; \R)} .
		\end{split}
		\end{equation}
		Next note that \eqref{eq:smoothing} ensures that for all $M \in \N$, $v \in H$ it holds that 
		\begin{equation}
		\label{eq:R.commute}
		\begin{split}
	\mathcal{R}\!\left[S_Mv\right]&=(\operatorname{Id}_H-P_0)S_Mv
		=S_Mv-P_0S_Mv
		\\&=S_Mv-\left<e_0, S_Mv \right>_H e_0 
		=S_Mv-\left<S_Me_0, v \right>_H e_0
		\\&=S_Mv-\left<e_0, v \right>_H e_0
		=S_Mv-\left<e_0, v \right>_H S_Me_0
		\\&=S_Mv-S_M\!\left(\left<e_0, v \right>_H e_0\right)
		=S_M\!\left(v-\left<e_0, v \right>_H e_0\right)
		\\&=S_M\mathcal{R}[v].
		\end{split}
		\end{equation}
		Combining this with \eqref{eq:poly.reg}, \eqref{ghpoliG0}, and \eqref{eq:bound:mathcalR} proves for all  $M \in \{2,3,\dots\}$, $v \in H_\chi$   that 
		\begin{align*}
		\label{ghpoliG}
		&\Big\| \mathcal{R}\Big[P_MS_M \big(v+\tfrac{T}{M}\!\left(\textstyle\sum_{k=0}^{q}a_k[v]^k\right)+Z_2^M\big) \Big]  \Big\|_{L^p(\lambda; \R)} \numberthis
		\\&\leq  \Big\| P_MS_M \mathcal{R} \big[v+\tfrac{T}{M}\textstyle \sum_{k=0}^{q}a_k[v]^k \big]  \Big\|_{L^p(\lambda; \R)}+2 \|S_MP_MZ_2^M\|_{L^p(\lambda; \R)}.
		\end{align*}
		This  establishes for all   $M \in \{2,3,\dots\}$, $v \in H_\chi$   that 
		\begin{align*}
		&\Big\| \mathcal{R}\Big[P_MS_M \big(v+\tfrac{T}{M}\!\left(\textstyle\sum_{k=0}^{q}a_k[v]^k\right)+Z_2^M\big)\Big]  \Big\|_{L^p(\lambda; \R)}
		\\&\leq C\Big\|  P_MS_M \mathcal{R} \big[v+\tfrac{T}{M}\textstyle \sum_{k=0}^{q}a_k[v]^k \big] \Big\|_{H_\chi}+2\|S_MP_MZ_2^M\|_{L^p(\lambda; \R)} \numberthis
		\\&=C\Big\| (\eta -A)^\chi P_MS_M\mathcal{R} \big[v+\tfrac{T}{M}\textstyle \sum_{k=0}^{q}a_k[v]^k \big]    \Big\|_H
		+2\|S_MP_MZ_2^M\|_{L^p(\lambda; \R)}.
		\end{align*}
		Combining this with \eqref{eq:generator.commute} implies for all  $M \in \{2,3,\dots\}$, $v \in H_\chi$   that 
		\begin{align*}
		&\Big\| \mathcal{R}\Big[P_MS_M\big(v+\tfrac{T}{M}\!\left(\textstyle\sum_{k=0}^{q}a_k[v]^k\right)+Z_2^M\big) \Big]  \Big\|_{L^p(\lambda; \R)} 
		\\&\leq C\Big\| (\eta -A)^\chi S_M P_M\mathcal{R} \big[v+\tfrac{T}{M}\textstyle \sum_{k=0}^{q}a_k[v]^k \big]    \Big\|_H
		+2\|S_MP_MZ_2^M\|_{L^p(\lambda; \R)} \numberthis
		\\&\leq C\Big\| (\eta -A)^\chi S_M\Big\|_{L(H)}\big\|P_M \mathcal{R} \big[v+\tfrac{T}{M}\textstyle \sum_{k=0}^{q}a_k[v]^k \big]\big\|_H  +2\|S_MP_MZ_2^M\|_{L^p(\lambda; \R)}.
		\end{align*}
		This and the fact that $\forall \,N \in \N, w \in H \colon \|P_N(w)\|_{H}\leq \|w\|_H$   prove for all  $M \in \{2,3,\dots\}$, $v \in H_\chi$  that 
		\begin{align*}
		&\Big\| \mathcal{R}\Big[P_MS_M\big(v+\tfrac{T}{M}\!\left(\textstyle\sum_{k=0}^{q}a_k[v]^k\right)+Z_2^M\big) \Big]  \Big\|_{L^p(\lambda; \R)} \numberthis
		\\&\leq C\Big\| (\eta -A)^\chi S_M\Big\|_{L(H)}\big\|\mathcal{R} \big[v+\tfrac{T}{M}\textstyle \sum_{k=0}^{q}a_k[v]^k \big]\big\|_H  +2\|S_MP_MZ_2^M\|_{L^p(\lambda; \R)}
		\\&\leq C\zeta_{\chi} \big[\tfrac{M}{T}\big]^{\chi}\big\|\mathcal{R} \big[v+\tfrac{T}{M}\textstyle \sum_{k=0}^{q}a_k[v]^k \big]\big\|_H+2\|S_MP_MZ_2^M\|_{L^p(\lambda; \R)}.
		\end{align*}
		The triangle inequality and  the linearity of $\mathcal{R}$ hence ensure for all   $M \in \{2,3,\dots\}$,  $v \in H_\chi$  that 
		\begin{equation}
		\begin{split}
		&\Big\| \mathcal{R}\Big[P_MS_M\big(v+\tfrac{T}{M}\!\left(\textstyle\sum_{k=0}^{q}a_k[v]^k\right)+Z_2^M\big) \Big]  \Big\|_{L^p(\lambda; \R)}
		\\&\leq C\zeta_{\chi} \big[\tfrac{M}{T}\big]^{\chi}\!\left(\|\mathcal{R}[v]\|_H+\tfrac{T}{M}
		\big\| \mathcal{R}\!\left[\textstyle \sum_{k=0}^{q}a_k[v]^k\right]\big\|_H\right)+2\|S_MP_MZ_2^M\|_{L^p(\lambda; \R)}
		\\&=C\zeta_{\chi} \big[\tfrac{M}{T}\big]^{\chi}\!\left(\|\mathcal{R}[v]\|_H+\tfrac{T}{M}
		\big\| \textstyle \sum_{k=0}^{q}a_k\mathcal{R}\!\left[[v]^k\right]\big\|_H\right)+2\|S_MP_MZ_2^M\|_{L^p(\lambda; \R)}.
		\end{split}
		\end{equation}
		The triangle inequality therefore implies for all   $M \in \{2,3,\dots\}$,  $v \in H_\chi$   that 
		\begin{equation}
		\begin{split}
		&\Big\| \mathcal{R}\Big[P_MS_M\big(v+\tfrac{T}{M}\!\left(\textstyle\sum_{k=0}^{q}a_k[v]^k\right)+Z_2^M\big) \Big]  \Big\|_{L^p(\lambda; \R)}
		\\&\leq C\zeta_{\chi} \big[\tfrac{M}{T}\big]^{\chi}\!\left(\|\mathcal{R}[v]\|_H+\tfrac{T}{M}\textstyle \max\!\left\{|a_0|,|a_1|,\dots, |a_q| \right\}\sum_{k=0}^{q}\big\| \mathcal{R}\!\left[[v]^k\right] \big\|_H \right)\\
		& \quad +2\|S_MP_MZ_2^M\|_{L^p(\lambda; \R)}.
		\end{split}
		\end{equation}
		This and \eqref{eq:vartheta} ensure for all   $M \in \{2,3,\dots\}$,  $v \in H_\chi$  that 
		\begin{align*}
		\label{caso41bisnNpoliG}
		&\Big\| \mathcal{R}\Big[P_MS_M\big(v+\tfrac{T}{M}\!\left(\textstyle\sum_{k=0}^{q}a_k[v]^k\right)+Z_2^M\big) \Big]  \Big\|_{L^p(\lambda; \R)}
		\\&\leq C\zeta_{\chi} \big[\tfrac{M}{T}\big]^{\chi} \Big( \|\mathcal{R}[v]\|_H+ \tfrac{T}{M}\vartheta\textstyle\sum_{k=0}^{q}\big\| \mathcal{R}\!\left[[v]^k\right] \!\big\|_H \Big)   +2\|S_MP_MZ_2^M\|_{L^p(\lambda; \R)} \numberthis
		\\&\leq \max\{C,1\}\zeta_{\chi}\tfrac{|M|^\chi}{\min \left\{T,1\right\}} \Big( \|\mathcal{R}[v]\|_H+ \tfrac{T\vartheta}{M}\textstyle\sum_{k=0}^{q}\big\| \mathcal{R}\!\left[[v]^k\right]\!\big\|_H\Big)+2\|S_MP_MZ_2^M\|_{L^p(\lambda; \R)}
\\&\leq \max\{C,1\}\zeta_{\chi}\tfrac{|M|^\chi}{\min \left\{T,1\right\}} \left( \|\mathcal{R}[v]\|_H+ \tfrac{T\vartheta}{M}\textstyle\sum_{k=0}^{q}\big\| \mathcal{R}\!\left[[v]^k\right]\big\|_H  +2 \|S_MP_MZ_2^M\|_{L^p(\lambda; \R)}   \right).
		\end{align*}
		Furthermore, note that \eqref{eq:bound:mathcalR} and the fact that 
		$\|\mathcal{R}\|_{L(H)}\leq 2$
		ensure  for all   $v \in H_\chi$    that  
		\begin{equation}
		\label{qestnNpoliG}
		\begin{split}
		\textstyle\sum_{k=0}^{q}\big\| \mathcal{R}\!\left[[v]^k\right]\!\big\|_H 
		&\leq \textstyle\sum_{k=0}^{q}\|\mathcal{R}\|_{L(H)}\big\| [v]^k\big\|_H \leq 2\textstyle\sum_{k=0}^{q}\big\| [v]^k\big\|_H\\
		&=2\textstyle\sum_{k=0}^{q}\big\| \textstyle\sum_{m=0}^{k} \binom{k}{m}(P_0(v))^{(k-m)}(\mathcal{R}[v])^m\big\|_H
		\\&\leq 2\textstyle\sum_{k=0}^{q}\textstyle\sum_{m=0}^{k} \binom{k}{m}\big\| (P_0(v))^{(k-m)}(\mathcal{R}[v])^m\big\|_H
		\\&= 2\textstyle\sum_{k=0}^{q}\textstyle\sum_{m=0}^{k} \binom{k}{m} \left|\left<e_0,v\right>_H\right|^{(k-m)}\big\|(\mathcal{R}[v])^m\big\|_H.
		\end{split}
		\end{equation}
		This assures for all $r \in (0,\infty)$, $M \in \{2,3,\dots\}$, $n \in \{1,2,\dots,M\}$, $v \in \mathbb{H}_{n-1,r}^{M}$    that  
		\begin{equation}
		\begin{split}
		&\textstyle\sum_{k=0}^{q}\big\| \mathcal{R}\big([v]^k\big)\big\|_H \leq 2\textstyle\sum_{k=0}^{q}\textstyle\sum_{m=0}^{k} \binom{k}{m} \left|\left<e_0,v\right>_H\right|^{(k-m)}\big\|\mathcal{R}[v]\big\|^m_{L^{2m}(\lambda, \R)}
		\\&\leq  2\textstyle\sum_{k=0}^{q}\textstyle\sum_{m=0}^{k} \binom{k}{m} \left|\left<e_0,v\right>_H\right|^{(k-m)}\big\|\mathcal{R}[v]\big\|^m_{L^{p}(\lambda, \R)}
		\\&\leq  2\textstyle\sum_{k=0}^{q}\textstyle\sum_{m=0}^{k} \binom{k}{m} \tfrac{1}{2^m}
		|\rho_{M,r}|^{m(n-1-M)}\left|\left<e_0,v\right>_H\right|^{(k-m)}\left|\left<e_0,v\right>_H\right|^{m}
		\\&\leq 2\textstyle\sum_{k=0}^{q}\textstyle\sum_{m=0}^{k} \binom{k}{m}|\rho_{M,r}|^{(n-1-M)}\left|\left<e_0,v\right>_H\right|^{k}\\
		& = 2 |\rho_{M,r}|^{(n-1-M)} \left( \textstyle\sum_{k=0}^{q}\left|\left<e_0,v\right>_H\right|^{k} \left[ \textstyle\sum_{m=0}^{k} \binom{k}{m}\right] \right)\\
		 & = 2 |\rho_{M,r}|^{(n-1-M)} \left( \textstyle\sum_{k=0}^{q} 2^k \left|\left<e_0,v\right>_H\right|^{k} \right).
		\end{split}
		\end{equation}
		Therefore, we obtain for all $r \in (0,\infty)$, $M \in \{2,3,\dots\}$, $n \in \{1,2,\dots,M\}$, $v \in \mathbb{H}_{n-1,r}^{M}$ with $ \left|\left< e_0, v\right>_H\right|> 1$   that  
		\begin{equation}
		\begin{split}
		&\textstyle\sum_{k=0}^{q}\big\| \mathcal{R}\big([v]^k\big)\big\|_H
		\leq 2  |\rho_{M,r}|^{(n-1-M)} \left|\left<e_0,v\right>_H\right|^{q} \left( \textstyle\sum_{k=0}^{q} 2^k \right)\\
		& = 2  |\rho_{M,r}|^{(n-1-M)} \left|\left<e_0,v\right>_H\right|^{q} (2^{q+1} -1)
		\leq 2^{q+2}
 |\rho_{M,r}|^{(n-1-M)}\left|\left<e_0,v\right>_H\right|^{q}
		\\
		&\leq \vartheta|\rho_{M,r}|^{(n-1-M)}\left|\left<e_0,v\right>_H\right|^{q}.
		\end{split}
		\end{equation}
		Combining this with \eqref{caso41bisnNpoliG} proves for all $r \in (0,\infty)$, $M \in \{2,3,\dots\}$, $n \in \{1,2,\dots,M\}$, $v \in \mathbb{H}_{n-1,r}^{M}$, $\omega \in \Omega$ with $ \left|\left< e_0, v\right>_H\right|> 1$ and  $\|S_MP_MZ_2^M (\omega)\|_{L^p(\lambda; \R)}\leq |\rho_{M,r}|^{(n-1-M)}$  that  
		\begin{align*}
		&\Big\| \mathcal{R}\Big[P_MS_M \big(v+\tfrac{T}{M}\!\left(\textstyle\sum_{k=0}^{q}a_k[v]^k\right)+Z_2^M(\omega)\big)  \Big]  \Big\|_{L^p(\lambda; \R)} \numberthis
		\\&\leq \max\{C,1\}\tfrac{\zeta_{\chi} |M|^\chi}{\min\left\{T,1\right\}} 
		\left( \|\mathcal{R}[v]\|_{H}+ \tfrac{T\vartheta^2}{M}|\rho_{M,r}|^{(n-1-M)}\left|\left<e_0,v\right>_H\right|^{q} +2|\rho_{M,r}|^{(n-1-M)}    \right)
		\\&\leq \max\{C,1\}\tfrac{\zeta_{\chi} |M|^\chi}{\min\left\{T,1\right\}} 
		\left( \|\mathcal{R}[v]\|_{L^p(\lambda; \R)}+ \tfrac{T\vartheta^2}{M}|\rho_{M,r}|^{(n-1-M)}\left|\left<e_0,v\right>_H\right|^{q} +2|\rho_{M,r}|^{(n-1-M)}    \right)
		\\&\leq \max\{C,1\}\tfrac{\zeta_{\chi} |M|^\chi}{\min\left\{T,1\right\}} \left( \tfrac{1}{2}|\rho_{M,r}|^{(n-1-M)}+ \tfrac{T\vartheta^2}{M}|\rho_{M,r}|^{(n-1-M)}\left|\left<e_0,v\right>_H\right|^{q}  +2|\rho_{M,r}|^{(n-1-M)}    \right).
		\end{align*}
		Hence, we obtain for all $r \in (0,\infty)$, $M \in \{2,3,\dots\}$, $n \in \{1,2,\dots,M\}$, $v \in \mathbb{H}_{n-1,r}^{M}$, $\omega \in \Omega$ with $ \left|\left< e_0, v\right>_H\right|>1$ and  $\|S_MP_MZ_2^M (\omega)\|_{L^p(\lambda; \R)}\leq |\rho_{M,r}|^{(n-1-M)}$  that  
		\begin{equation}
		\label{poli7G}
		\begin{split}
		&\Big\| \mathcal{R}\Big[P_MS_M \! \left(v+\tfrac{T}{M}\!\left(\textstyle\sum_{k=0}^{q}a_k[v]^k\right)+Z_2^M(\omega)\right) \! \Big]  \Big\|_{L^p(\lambda; \R)}			
		\\&\leq \max\{C,1\}\tfrac{\zeta_{\chi} |M|^\chi}{\min\left\{T,1\right\}}|\rho_{M,r}|^{(n-1-M)}\!\left(  \tfrac{T\vartheta^2}{M}+3\right)\left|\left<e_0,v\right>_H\right|^{q}.
		\end{split}
		\end{equation}
		Next note that it holds for all $r \in (0,\infty)$, $M \in \{2,3,\dots\}$ that 
		\begin{equation}
		\begin{split}
		&\max\{C,1\}\tfrac{\zeta_{\chi} |M|^\chi}{|c_{M,r}|^{\nicefrac{1}{r}}\min\left\{T,1\right\}}\!\left(\tfrac{T\vartheta^2}{M}+3\right)
		\\&\leq \max\{C,1\}\tfrac{\zeta_{\chi} |M|^\chi}{|c_{M,r}|^{\nicefrac{1}{r}}\min\left\{T,1\right\}}\!\left(\max\{T,1\}\vartheta^2+3\max\{T,1\}\vartheta^2\right)
		\\&= \vartheta^2\max\{C,1\}\max\{T,1\}\tfrac{\zeta_{\chi} |M|^\chi}{|c_{M,r}|^{\nicefrac{1}{r}}\min\left\{T,1\right\}}
		\leq \tfrac{1}{2} \rho_{M,r}.
		\end{split}
		\end{equation}
		Combining this with \eqref{poli7G}  therefore establishes for all $r \in (0,\infty)$, $M \in \{2,3,\dots\}$, $n \in \{1,2,\dots,M\}$, $v \in \mathbb{H}_{n-1,r}^{M}$, $\omega \in \Omega$ with $ \left|\left< e_0, v\right>_H\right|> 1$ and  $\|S_MP_MZ_2^M (\omega)\|_{L^p(\lambda; \R)}\leq |\rho_{M,r}|^{(n-1-M)}$  that  
		\begin{equation}
		\label{indip1G}
		\begin{split}
		&\Big\| \mathcal{R}\Big[P_MS_M \! \left(v+\tfrac{T}{M}\!\left(\textstyle\sum_{k=0}^{q}a_k[v]^k\right)+Z_2^M(\omega)\right) \! \Big]  \Big\|_{L^p(\lambda; \R)}
		\\&\leq  \tfrac{1}{2}|\rho_{M,r}|^{(n-M)}|c_{M,r}|^{\nicefrac{1}{r}}\left| \left<e_0,  v\right>_H   \right|^{q}.
		\end{split}
		\end{equation}
		This implies for all  $r \in (0,\infty)$, $M \in \{2,3,\dots\}$, $n \in \{1,2,\dots,M\}$, $v \in H_\chi$ with $ \left|\left< e_0, v\right>_H\right|> 1$  that  
		\begin{align*}
		\label{pincopallo2G}
		&\P\Big( \Big\{ \Big\| \mathcal{R}\Big[P_MS_M \big(v+\tfrac{T}{M}\!\left(\textstyle\sum_{k=0}^{q}a_k[v]^k\right)+Z_2^M\big)  \Big]  \Big\|_{L^p(\lambda; \R)}
		\\&\quad \leq \tfrac{1}{2}|\rho_{M,r}|^{(n-M)}|c_{M,r}|^{\nicefrac{1}{r}}\left| \left<e_0,  v\right>_H   \right|^{q}\Big\}
		\cap \left\{  \|S_MP_MZ_2^M\|_{L^p(\lambda; \R)}\leq |\rho_{M,r}|^{(n-1-M)}\right\}\Big)
		\\&=\P\big( \|S_MP_MZ_2^M\|_{L^p(\lambda; \R)}\leq |\rho_{M,r}|^{(n-1-M)}\big). \numberthis
		\end{align*}
		Furthermore, note that \eqref{eq:poly.reg} ensures for all  $r \in (0,\infty)$, $M \in \{2,3,\dots\}$, $n \in \{1,2,\dots,M\}$, $v \in \mathbb{H}_{n-1,r}^{M}$ that 
		\begin{equation}
		\label{appartienenNpoliG}
		\begin{split}
		&\left\{\omega  \in \Omega \colon P_MS_M \left(v+\tfrac{T}{M}\!\left(\textstyle\sum_{k=0}^{q}a_k[v]^k\right)+Z_2^M(\omega)\right) \in \mathbb{H}_{n,r}^{M} \right\}
		\\&=\Big\{\omega \in \Omega \colon \Big\| \mathcal{R}\Big[P_MS_M \left(v+\tfrac{T}{M}\!\left(\textstyle\sum_{k=0}^{q}a_k[v]^k\right)+Z_2^M(\omega)\right) \!\Big]  \Big\|_{L^p(\lambda; \R)}
		\\&\leq \tfrac{1}{2}|\rho_{M,r}|^{(n-M)}\big\| P_0\!\left[P_MS_M\!\left(v+\tfrac{T}{M}\!\left(\textstyle\sum_{k=0}^{q}a_k[v]^k\right)+Z_2^M(\omega)\right)   \right] \! \big\|_H\Big\}.
		\end{split}
		\end{equation}
		This implies  for all  $r \in (0,\infty)$, $M \in \{2,3,\dots\}$, $n \in \{1,2,\dots,M\}$, $v \in \mathbb{H}_{n-1,r}^{M}$   that 
		\begin{align*}
		&\P\Big(\Big\{ \left|\left< e_0,P_MS_M\big(v+\tfrac{T}{M}\!\left(\textstyle\sum_{k=0}^{q}a_k[v]^k\right)+Z_2^M \big)\right>_H\right| \geq |c_{M,r}|^{\nicefrac{1}{r}} |\left< e_0, v\right>_H|^{q}\Big\} 
		\\&\quad \cap 
		\Big\{   P_MS_M\big(v+\tfrac{T}{M}\!\left(\textstyle\sum_{k=0}^{q}a_k[v]^k\right)+Z_2^M \big)         \in \mathbb{H}_{n,r}^{M}    \Big\}         \Big) 
		\\&=\P\Big(\Big\{ \left|\left< e_0,P_MS_M\big(v+\tfrac{T}{M}\!\left(\textstyle\sum_{k=0}^{q}a_k[v]^k\right)+Z_2^M \big)\right>_H\right| \geq |c_{M,r}|^{\nicefrac{1}{r}} |\left< e_0, v\right>_H|^{q}\Big\} 
		\\&\quad \cap \Big\{ \Big\| \mathcal{R}\Big[P_MS_M \big(v+\tfrac{T}{M}\!\left(\textstyle\sum_{k=0}^{q}a_k[v]^k\right)+Z_2^M\big)  \Big]  \Big\|_{L^p(\lambda; \R)}
		\\&\qquad \leq \tfrac{1}{2}|\rho_{M,r}|^{(n-M)}\big\| P_0\left[P_MS_M\big(v+\tfrac{T}{M}\!\left(\textstyle\sum_{k=0}^{q}a_k[v]^k\right)+Z_2^M\big)   \right]  \big\|_H\Big\}\Big). \numberthis
		\end{align*}
		Hence, we obtain for all $r \in (0,\infty)$, $M \in \{2,3,\dots\}$, $n \in \{1,2,\dots,M\}$, $v \in \mathbb{H}_{n-1,r}^{M}$  that 
		\begin{align*}
		&\P\Big(\Big\{ \left|\left< e_0,P_MS_M\big(v+\tfrac{T}{M}\!\left(\textstyle\sum_{k=0}^{q}a_k[v]^k\right)+Z_2^M \big)\right>_H\right| \geq |c_{M,r}|^{\nicefrac{1}{r}} \left|\left< e_0, v\right>_H\right|^{q}\Big\} 
		\\&\quad \cap 
		\Big\{   P_MS_M\big(v+\tfrac{T}{M}\!\left(\textstyle\sum_{k=0}^{q}a_k[v]^k\right)+Z_2^M \big)         \in \mathbb{H}_{n,r}^{M}    \Big\}         \Big) 
		\\&=\P\Big(\Big\{ \left|\left< e_0,P_MS_M\big(v+\tfrac{T}{M}\!\left(\textstyle\sum_{k=0}^{q}a_k[v]^k\right)+Z_2^M \big)\right>_H\right| \geq |c_{M,r}|^{\nicefrac{1}{r}} \left|\left< e_0, v\right>_H\right|^{q}\Big\} 
		\\&\quad \cap \Big\{ \Big\| \mathcal{R}\Big[P_MS_M \big(v+\tfrac{T}{M}\!\left(\textstyle\sum_{k=0}^{q}a_k[v]^k\right)+Z_2^M\big)  \Big]  \Big\|_{L^p(\lambda; \R)}
		\\&\qquad \leq \tfrac{1}{2}|\rho_{M,r}|^{(n-M)} \left|\left< e_0,P_MS_M\big(v+\tfrac{T}{M}\!\left(\textstyle\sum_{k=0}^{q}a_k[v]^k\right)+Z_2^M \big)\right>_H\right| \Big\}\Big). \numberthis
		\end{align*}
		This assures for all $r \in (0,\infty)$, $M \in \{2,3,\dots\}$, $n \in \{1,2,\dots,M\}$, $v \in \mathbb{H}_{n-1,r}^{M}$   that 
		\begin{align*}
		\label{3119}
		&\P\Big(\Big\{ \left|\left< e_0,P_MS_M\big(v+\tfrac{T}{M}\!\left(\textstyle\sum_{k=0}^{q}a_k[v]^k\right)+Z_2^M \big)\right>_H\right| \geq |c_{M,r}|^{\nicefrac{1}{r}} \left|\left< e_0, v\right>_H\right|^{q}\Big\} 
		\\&\quad \cap 
		\Big\{   P_MS_M\big(v+\tfrac{T}{M}\!\left(\textstyle\sum_{k=0}^{q}a_k[v]^k\right)+Z_2^M \big)         \in \mathbb{H}_{n,r}^{M}    \Big\}         \Big) 
		\\&\geq \P\Big(\Big\{ \left|\left< e_0,P_MS_M\big(v+\tfrac{T}{M}\!\left(\textstyle\sum_{k=0}^{q}a_k[v]^k\right)+Z_2^M \big)\right>_H\right| \geq |c_{M,r}|^{\nicefrac{1}{r}} \left|\left< e_0, v\right>_H\right|^{q}\Big\} 
		\\&\quad \cap \Big\{ \Big\| \mathcal{R}\Big[P_MS_M \big(v+\tfrac{T}{M}\!\left(\textstyle\sum_{k=0}^{q}a_k[v]^k\right)+Z_2^M\big)  \Big]  \Big\|_{L^p(\lambda; \R)}
		\\&\qquad \leq \tfrac{1}{2}|\rho_{M,r}|^{(n-M)}|c_{M,r}|^{\nicefrac{1}{r}} |\left< e_0, v\right>_H|^{q} \Big\}\Big). \numberthis
		\end{align*}
Next observe that \eqref{comm}, \eqref{eq:smoothing}  prove for all $N\in\N$, $v \in H_{-\nu}$ that 
		\begin{equation}
		\label{eq:SP.symmetric}
		\begin{split}
		&\left< e_0,P_NS_N v\right>_H
		=
		\left< e_0,S_N P_Nv\right>_H 
			=
		\left< S_N e_0, P_Nv\right>_H 
		=
		\left< e_0,P_N v\right>_H
		.
		\end{split}
		\end{equation}
		This implies that it holds for all $M \in \{2,3,\dots\}$, $n\in\{1,2,\ldots,M\}$, $v \in H_{-\nu}$    that 
		\begin{equation}
		\label{ze0}
		\begin{split}
		\left< e_0,P_MS_M\big(v+Z_n^M \big)\right>_H&=\left< e_0,P_MS_Mv\right>_H
	+\left< e_0,P_MS_MZ_n^M\right>_H
		\\&=\left< e_0,P_MS_Mv\right>_H
		+\left< e_0,P_M Z_n^M\right>_H.
		\end{split}
		\end{equation}
		Therefore, we obtain for all  $M \in \{2,3,\dots\}$, 
		$n\in\{1,2,\ldots,M\}$, 
		$v \in H_{-\nu}$, $x \in \R$ that 
				\begin{equation}
				\label{eq:0.event}
				\left\{ \omega \in \Omega \colon \left|\left< e_0,P_MS_M\!\left(v+Z_n^M(\omega) \right)\right>_H\right| 
				\geq x  \right\} \in \sigma\!\left(\left< e_0, P_M Z_n^M\right>_H\right).
				\end{equation}
		Moreover, observe that it holds for all  $M \in \{2,3,\dots\}$, $n\in\{1,2,\ldots,M\}$, $v \in H_{-\nu}$ that 
		\begin{equation}
		\label{Rsum}
		\begin{split}
		&\mathcal{R}\big[P_MS_M \big(v+Z_n^M\big)  \big] 
=\mathcal{R}[P_MS_M v  ] 
	 +\mathcal{R}\!\left[P_MS_M Z_n^M\right]\!.
		\end{split}
		\end{equation}
		In addition, note that \eqref{comm}, \eqref{eq:R.commute}, and the fact that 
		$
		  \forall \, u \in H_{-\nu},\, M\in\{2,3,\dots\}
		  \colon$ $
		  P_M u \in H
		$
		ensure for all $M \in \{2,3,\dots\}$ , $v \in H_{-\nu}$ that 
		\begin{equation}
		\mathcal{R}\!\left[P_MS_M v\right]
		=
		\mathcal{R}\!\left[S_MP_M v\right]
		=
		S_M \mathcal{R}\!\left[P_M v\right]
		.
		\end{equation}
		Hence, we obtain for all $M \in \{2,3,\dots\}$ , $v \in H_{-\nu}$ that 
		\begin{equation}
		\label{3125}
		\begin{split}
		\mathcal{R}\!\left[P_MS_M v\right]
		&=
		S_M\Big( 
		\textstyle\sum_{k\in \{-M,\dots,M\}\backslash\{0\}}
		\left<e_k, P_M v\right>_H e_k
		\Big).
		\end{split}
		\end{equation}
		This and \eqref{Rsum} establish for all $M \in \{2,3,\dots\}$, $n\in\{1,2,\ldots,M\}$, $v \in H_{-\nu}$ that 
		\begin{equation}
		\begin{split}
		&\mathcal{R}\big[P_MS_M \big(v+Z_n^M\big)  \big] \\
&=\mathcal{R}[P_MS_M v  ]  
		+
		S_M\Big( 
		\textstyle\sum_{k\in \{-M,\dots,M\}\backslash\{0\}}
		\left<e_k,P_M Z_n^M\right>_H e_k
		\Big)
		.
		\end{split}
		\end{equation}
		Therefore, we obtain that for all $M \in \{2,3,\dots\}$, $n \in \{1,2,\dots,M\}$, $v \in H_{-\nu}$, $x \in \R$ it holds that 
		\begin{align*}
		\label{p2}
		&\Big\{ \omega \in \Omega \colon \big\| \mathcal{R}\big[P_MS_M \big(v+Z_n^M(\omega)\big)  \big] \big\|_{L^p(\lambda; \R)}
 \leq x   \Big\}
		\\&=\Big\{ \omega \in \Omega \colon \Big\| \mathcal{R}[P_MS_M v  ]  
		+
		S_M\Big( 
		\textstyle\sum_{k\in \{-M,\dots,M\}\backslash\{0\}}
		\left<e_k,P_M Z_n^M(\omega)\right>_H e_k
		\Big)  \Big\|_{L^p(\lambda; \R)}
 \leq x  \Big\}
				\\&\quad\in\sigma\!\left(\left\{\left<e_m,P_M Z^M_n\right>_H\colon m\in \{-M,\ldots,M\}\backslash\{0\} \right\}\right). \numberthis
		\end{align*} 
		Moreover, observe that 
		for all 
		$
		M\in\{2,3,\ldots\}
		$, $n \in \{1,2,\dots,M\}$
		it holds that 
		$
		\sigma\!\left(\left<e_m, P_M Z_n^M\right>_H \right) 
		$,
		$m\in\Z$, 
		are independent sigma algebras
		(cf., e.g.,  Proposition 2.5.2 in~\cite{LiuRoeckner2015}). 
		Combining this with \eqref{eq:0.event} and \eqref{p2} ensures  for all  $M \in \{2,3,\dots\}$, $n \in \{1,2,\dots,M\}$, $v \in H_{-\nu}$, $x_1, x_2 \in \R$ that 
		\begin{equation}
\left\{ \omega \in \Omega \colon \left|\left< e_0,P_MS_M\!\left(v+Z_n^M(\omega) \right)\right>_H\right| 
\geq x_1  \right\} 
		\end{equation}
		and
		\begin{equation}
		\left\{ \omega \in \Omega \colon \big\| \mathcal{R}\big[P_MS_M \big(v+Z_n^M(\omega)\big)  \big] \big\|_{L^p(\lambda; \R)}
		\leq x_2   \right\}
		\end{equation}
		are independent events.
		Hence, we obtain for all $M \in \{2,3,\dots\}$, $n \in \{1,2,\dots,M\}$, $v \in H_{-\nu}$, $x_1, x_2 \in \R$    that 
		\begin{equation}
		\label{a2}
		\begin{split}
		&\P\Big(\left\{ \left|\left< e_0,P_MS_M\!\left(v+Z_n^M \right)\right>_H\right| 
		\geq x_1  \right\} \cap \left\{ \big\| \mathcal{R}\big[P_MS_M \big(v+Z_n^M\big)  \big] \big\|_{L^p(\lambda; \R)}
		\leq x_2   \right\} \Big)
		\\&=\P\Big(\left|\left< e_0,P_MS_M\!\left(v+Z_n^M \right)\right>_H\right| 
		\geq x_1 \Big)
\P\Big(\big\| \mathcal{R}\big[P_MS_M \big(v+Z_n^M\big)  \big] \big\|_{L^p(\lambda; \R)}
		\leq x_2   \Big).
		\end{split}
		\end{equation}
		Combining this with \eqref{3119}  establishes for all $r \in (0,\infty)$, $M \in \{2,3,\dots\}$, $n \in \{1,2,\dots,M\}$, $v \in \mathbb{H}_{n-1,r}^{M}$   that 
		\begin{equation}
		\begin{split}
		& \P\Big(\Big\{ \left|\left< e_0,P_MS_M\big(v+\tfrac{T}{M}\!\left(\textstyle\sum_{k=0}^{q}a_k[v]^k\right)+Z_2^M \big)\right>_H\right| \geq |c_{M,r}|^{\nicefrac{1}{r}} \left|\left< e_0, v\right>_H\right|^{q}\Big\} 
		\\&\quad \cap 
		\Big\{   P_MS_M\big(v+\tfrac{T}{M}\!\left(\textstyle\sum_{k=0}^{q}a_k[v]^k\right)+Z_2^M \big)         \in \mathbb{H}_{n,r}^{M}    \Big\}         \Big) 
		\\&\geq \P\Big( \left|\left< e_0,P_MS_M\big(v+\tfrac{T}{M}\!\left(\textstyle\sum_{k=0}^{q}a_k[v]^k\right)+Z_2^M \big)\right>_H\right| \geq |c_{M,r}|^{\nicefrac{1}{r}} \left|\left< e_0, v\right>_H\right|^{q}\Big)
		\\&\quad \cdot
		\P\Big(  \Big\| \mathcal{R}\Big[P_MS_M \big(v+\tfrac{T}{M}\!\left(\textstyle\sum_{k=0}^{q}a_k[v]^k\right)+Z_2^M\big)  \Big]  \Big\|_{L^p(\lambda; \R)}   
		\\&\qquad \leq \tfrac{1}{2}|\rho_{M,r}|^{(n-M)}|c_{M,r}|^{\nicefrac{1}{r}} |\left< e_0, v\right>_H|^{q}\Big).
		\end{split}
		\end{equation}
		This implies for all  $r \in (0,\infty)$, $M \in \{2,3,\dots\}$, $n \in \{1,2,\dots,M\}$, $v \in \mathbb{H}_{n-1,r}^{M}$   that 
		\begin{align*}
		&\P\Big(\Big\{ \left|\left< e_0,P_MS_M\big(v+\tfrac{T}{M}\!\left(\textstyle\sum_{k=0}^{q}a_k[v]^k\right)+Z_2^M \big)\right>_H\right| \geq |c_{M,r}|^{\nicefrac{1}{r}} \left|\left< e_0, v\right>_H\right|^{q}\Big\} 
		\\&\quad \cap 
		\Big\{   P_MS_M\big(v+\tfrac{T}{M}\!\left(\textstyle\sum_{k=0}^{q}a_k[v]^k\right)+Z_2^M \big)         \in \mathbb{H}_{n,r}^{M}    \Big\}         \Big) 
		\\&\geq \P\Big( \left|\left< e_0,P_MS_M\big(v+\tfrac{T}{M}\!\left(\textstyle\sum_{k=0}^{q}a_k[v]^k\right)+Z_2^M \big)\right>_H\right| \geq |c_{M,r}|^{\nicefrac{1}{r}} \left|\left< e_0, v\right>_H\right|^{q}\Big) \numberthis
		\\&\quad \cdot \P\Big( \Big\{ \Big\| \mathcal{R}\Big[P_MS_M \big(v+\tfrac{T}{M}\!\left(\textstyle\sum_{k=0}^{q}a_k[v]^k\right)+Z_2^M\big)  \Big]  \Big\|_{L^p(\lambda; \R)}
		\\&\quad \leq \tfrac{1}{2}|\rho_{M,r}|^{(n-M)}|c_{M,r}|^{\nicefrac{1}{r}} \left|\left< e_0, v\right>_H\right|^{q}\Big\}
		\cap \left\{  \|S_MP_MZ_2^M\|_{L^p(\lambda; \R)}\leq |\rho_{M,r}|^{(n-1-M)}\right\}\Big).
		\end{align*}
		Combining this with \eqref{pincopallo2G} proves for all  $r \in (0,\infty)$, $M \in \{2,3,\dots\}$, $n \in \{1,2,\dots,M\}$, $v \in \mathbb{H}_{n-1,r}^{M}$ with $ \left|\left< e_0, v\right>_H\right|> 1$  that 
		\begin{equation}
		\begin{split}
		&\P\Big(\Big\{ \left|\left< e_0,P_MS_M\big(v+\tfrac{T}{M}\!\left(\textstyle\sum_{k=0}^{q}a_k[v]^k\right)+Z_2^M \big)\right>_H\right| \geq |c_{M,r}|^{\nicefrac{1}{r}} \left|\left< e_0, v\right>_H\right|^{q}\Big\} 
		\\&\quad \cap 
		\Big\{   P_MS_M\big(v+\tfrac{T}{M}\!\left(\textstyle\sum_{k=0}^{q}a_k[v]^k\right)+Z_2^M \big)         \in \mathbb{H}_{n,r}^{M}    \Big\}         \Big) 
		\\&\geq \P\Big( \left|\left< e_0,P_MS_M\big(v+\tfrac{T}{M}\!\left(\textstyle\sum_{k=0}^{q}a_k[v]^k\right)+Z_2^M \big)\right>_H\right| \geq |c_{M,r}|^{\nicefrac{1}{r}} \left|\left< e_0, v\right>_H\right|^{q}\Big)
		\\&\quad \cdot \P\big( \|S_MP_MZ_2^M\|_{L^p(\lambda; \R)}\leq |\rho_{M,r}|^{(n-1-M)}\big).
		\end{split}
		\end{equation}
		This and  \eqref{nr1poliG} assure  for all  $r \in (0,\infty)$, $M \in \{2,3,\dots\}$, $n \in \{1,2,\dots,M\}$, $v \in \mathbb{H}_{n-1,r}^{M}$ with $ \left|\left< e_0, v\right>_H\right|\geq |\theta_{M,r}|^{\nicefrac{(q^{(n-1)})}{r}}$  that 
		\begin{equation}
		\begin{split}
		&\P\Big(\Big\{ \left|\left< e_0,P_MS_M\big(v+\tfrac{T}{M}\!\left(\textstyle\sum_{k=0}^{q}a_k[v]^k\right)+Z_2^M \big)\right>_H\right| \geq |c_{M,r}|^{\nicefrac{1}{r}} \left|\left< e_0, v\right>_H\right|^{q}\Big\} 
		\\&\quad \cap 
		\Big\{   P_MS_M\big(v+\tfrac{T}{M}\!\left(\textstyle\sum_{k=0}^{q}a_k[v]^k\right)+Z_2^M \big)         \in \mathbb{H}_{n,r}^{M}    \Big\}         \Big) 
		\\&\geq \P\big(\|S_MP_MZ_2^M\|_{L^p(\lambda; \R)}\leq |\rho_{M,r}|^{(n-1-M)}\big)\left[\tfrac{y_M}{\sqrt{2 \pi \gamma_{M} T}}\right]^{(2M+1)}\exp\!\left( -\tfrac{3M^2|y_M|^2}{\gamma_{M}T}\right)
		\\&\geq \P\big(\|S_MP_MZ_2^M\|_{L^p(\lambda; \R)}\leq |\rho_{M,r}|^{(-1-M)}\big)\left[\tfrac{y_M}{\sqrt{2 \pi \gamma_{M} T}}\right]^{(2M+1)}\exp\!\left( -\tfrac{3M^2|y_M|^2}{\gamma_{M}T}\right).
		\end{split}
		\end{equation}
		Corollary \ref{stima1cor}  (with      $(\Omega, \mathcal{F}, \P)=(\Omega, \mathcal{F}, \P)$, $T=T$,  $x=|\rho_{M,r}|^{-1-M}$, $\gamma=\gamma_M$, $y=z_{M,r}$, $p=p$,  $\delta=\zeta_{\nu+s}$, $\nu=\nu$, $s=s$,  $N=M$, $v=0$, $S=(H \ni w \mapsto S_M w \in H_{\nu +s})$, $W=W$ for $M \in \{2,3,\dots\}$, $r \in (0,\infty)$ in the notation of Corollary  \ref{stima1cor})  therefore implies for all  $r \in (0,\infty)$, $M \in \{2,3,\dots\}$, $n \in \{1,2,\dots,M\}$, $v \in \mathbb{H}_{n-1,r}^{M}$ with $ \left|\left< e_0, v\right>_H\right|\geq |\theta_{M,r}|^{\nicefrac{(q^{(n-1)})}{r}}$  that 
		\begin{equation}
		\label{eq:estimate:each:prob}
		\begin{split}
		&\P\Big(\Big\{ \left|\left< e_0,P_MS_M\big(v+\tfrac{T}{M}\!\left(\textstyle\sum_{k=0}^{q}a_k[v]^k\right)+Z_2^M \big)\right>_H\right|\geq |c_{M,r}|^{\nicefrac{1}{r}}\left|\left< e_0, v\right>_H\right|^{q}\Big\}
		\\&\quad \cap \Big\{P_MS_M \big(v+\tfrac{T}{M}\!\left(\textstyle\sum_{k=0}^{q}a_k[v]^k\right)+Z_2^M \big) \in \mathbb{H}_{n,r}^{M} \Big\}\Big)
		\\&\geq \left[\tfrac{z_{M,r}}{\sqrt{2 \pi \gamma_{M}T}}\right]^{(2M+1)}\exp\!\left( -\tfrac{3M^2|z_{M,r}|^2}{\gamma_{M}T}\right)\left[\tfrac{y_M}{\sqrt{2 \pi \gamma_{M}T}}\right]^{(2M+1)}\exp\!\left( -\tfrac{3M^2|y_M|^2}{\gamma_{M}T}\right)
		\\&=\left[\tfrac{z_{M,r} \,y_M}{2\pi \gamma_{M}T}\right]^{(2M+1)}\exp\!\left( -\tfrac{3M^2}{\gamma_{M}T}\!\left(|z_{M,r}|^2+|y_M|^2\right)\right).
		\end{split}
		\end{equation}
		Moreover, observe that for all $r \in (0, \infty)$, $N \in \N$, $n \in \{0, 1, \ldots, N\}$, $\alpha \in \R$ it holds that $\alpha e_0 \in \mathbb{H}^N_{n, r}$. This ensures  for all $r \in (0, \infty)$, $N \in \N$, $n \in \{0, 1, \ldots, N\}$ that $\mathbb{H}^N_{n, r} \neq \emptyset$ and
		\begin{align}
		\sup\!\big( \big\{|\langle e_0, v \rangle_H| \colon v \in \mathbb{H}^N_{n, r} \big\}\big) = \infty.
		\end{align}
		Hence, we obtain that for all  $r \in (0,\infty)$, $M \in \{2,3,\dots\}$, $n \in \{1,2,\dots,M\}$ it holds that
		\begin{equation}
		\big\{v \in \mathbb{H}_{n-1,r}^{M} \colon \left|\left< e_0, v\right>_H\right|\geq |\theta_{M,r}|^{\nicefrac{(q^{(n-1)})}{r}} \big\} \neq \emptyset.
		\end{equation}
		This and \eqref{eq:estimate:each:prob} assure 
 for all  $r \in (0,\infty)$, $M \in \{2,3,\dots\}$ that 
		\begin{align*}
		\label{esponenzialeNpoliG}
&\bigg[ \textstyle\prod\limits_{n=1}^{M-1} \displaystyle \inf\!\bigg( \bigg\{ \P\Big(\Big\{\left|\left< e_0,P_MS_M\big(v+\tfrac{T}{M}\!\left(\textstyle\sum_{k=0}^{q}a_k[v]^k\right)+Z_2^M \big)\right>_H\right| \geq |c_{M,r}|^{\nicefrac{1}{r}} \left|\left< e_0, v\right>_H\right|^{q}\Big\} 
\\&\qquad \qquad \qquad \quad \cap 
\Big\{   P_MS_M\big(v+\tfrac{T}{M}\!\left(\textstyle\sum_{k=0}^{q}a_k[v]^k\right)+Z_2^M \big)         \in \mathbb{H}_{n,r}^{M}    \Big\}         \Big)\\
& \qquad \qquad \qquad \qquad \qquad \colon \Big( v \in \mathbb{H}_{n-1,r}^{M}  \colon |\langle e_0, v\rangle_H |\geq |\theta_{M,r}|^{\nicefrac{(q^{(n-1)})}{r}}  \Big) \bigg\} \cup \{1\} \bigg)
\bigg]
		\\& \geq\left[\tfrac{z_{M,r} \,y_M}{2\pi \gamma_{M}T}\right]^{M(2M+1)}\,\exp\!\left( -\tfrac{3M^3}{\gamma_{M}T}\!\left(|z_{M,r}|^2+|y_M|^2\right)\right). \numberthis
		\end{align*}
		Combining this with \eqref{central4nNpoliG} proves for all  $r \in (0,\infty)$, $M \in \{2,3,\dots\}$  that
		\begin{equation}
		\label{poli12G}
		\begin{split}
		&\E\Big[\left|\left< e_0, Y_M^M\right>_H\right|^r\Big] \geq |\theta_{M,r}|^{\,(q^{(M-1)})}\, \P \Big(\Big\{ \left|\left< e_0, Y_1^M\right>_H\right|^r \geq|c_{M,r}|^{\nicefrac{1}{(1-q)}}\theta_{M,r} \Big\} 
		\\&\quad \cap \big\{ Y_1^M \in \mathbb{H}_{0,r}^{M}  \big\}\Big) 
		\left[\tfrac{z_{M,r} y_M}{2\pi \gamma_{M}T}\right]^{M(2M+1)}
	 \exp\!\left( -\tfrac{3M^3}{\gamma_{M}T}\!\left(|z_{M,r}|^2+|y_M|^2\right)\right).
		\end{split}
		\end{equation}
		Next note that it holds for all   $r \in (0,\infty)$, $M \in \{2,3,\dots\}$ that 
		\begin{equation}
		\label{y1hchi}
		\begin{split}
		&\P \Big(\Big\{ \left|\left< e_0, Y_1^M\right>_H\right|^r \geq |c_{M,r}|^{\nicefrac{1}{(1-q)}}\theta_{M,r} \Big\} 
		\cap \big\{ Y_1^M \in \mathbb{H}_{0,r}^{M}  \big\}\Big) 
		\\&=\P \Big(\Big\{ \left|\left< e_0, Y_1^M\right>_H\right| \geq |c_{M,r}|^{\nicefrac{1}{[r(1-q)]}}\,|\theta_{M,r}|^{\nicefrac{1}{r}} \Big\} \cap \big\{ Y_1^M \in \mathbb{H}_{0,r}^{M}  \big\}\Big) 
		\\&=\P \Big(\Big\{ \left|\left< e_0, Y_1^M\right>_H\right| \geq|c_{M,r}|^{\nicefrac{1}{[r(1-q)]}}\,|\theta_{M,r}|^{\nicefrac{1}{r}} \Big\} 
		\cap \left\{ Y_1^M\in H_{\chi}   \right\}
		\\&\quad \cap \left\{ \big\| Y_1^M-\left<e_0, Y^M_1\right>_He_0 \big\|_{L^p(\lambda; \R)}  \leq \tfrac{1}{2}|\rho_{M,r}|^{-M}\left| \left<e_0, Y^M_1\right>_H  \right|\right\}\Big) .
		\end{split}
		\end{equation}
		In addition, note that \eqref{eq:poly.reg} ensures for all $M \in \{2,3,\dots\}$ that 
		\begin{equation}
		P_M(\xi)+\tfrac{T}{M}\textstyle\sum_{k=0}^{q}a_k\!\left[P_M(\xi)\right]^k\in H.
		\end{equation}
	The fact that $\forall \, N \in \N, \omega \in \Omega \colon W_{\frac{T}{N}}(\omega) \in H_{-\nu}$ and \eqref{eq:smoothing} 
 therefore establish for all $M \in \{2,3,\dots\}$, $\omega \in \Omega $ that 
		\begin{equation}
		S_M\!\left(  P_M(\xi)+\tfrac{T}{M}\textstyle\sum_{k=0}^{q}a_k\!\left[P_M(\xi)\right]^k \right) \in H_1 \qquad  \text{and} \qquad S_MW_{\frac{T}{M}}(\omega)  \in H_{-\nu+1} .
		\end{equation}
		This,  the fact that $\forall \, j \in \{1, -\nu+1\} \colon H_{j} \subseteq H_{-1}$, and the fact that $\forall \, N \in \N\colon P_N(H_{-1}) \subseteq H_1$  prove for all $M \in \{2,3,\dots\}$, $\omega \in \Omega $ that 
		\begin{equation}
		P_MS_M\!\left(  P_M(\xi)+\tfrac{T}{M}\textstyle\sum_{k=0}^{q}a_k\!\left[P_M(\xi)\right]^k \right) \in H_1 \qquad  \text{and} \qquad P_MS_MW_{\frac{T}{M}}(\omega )  \in H_{1} .
		\end{equation}
		This implies for all $M \in \{2,3,\dots\}$, $\omega \in \Omega $ that 
		\begin{equation}
		Y_1^M(\omega)=P_MS_M\!\left(  P_M(\xi)+\tfrac{T}{M}\!\left(\textstyle\sum_{k=0}^{q}a_k\!\left[P_M(\xi)\right]^k\right) +W_{\frac{T}{M}}(\omega)\right) \in H_1.
		\end{equation}
		The fact that $\chi \leq  1$ therefore ensures for all $M \in \{2,3,\dots\}$, $\omega \in \Omega $ that 
		\begin{equation}
		Y_1^M(\omega)\in H_{\chi }.
		\end{equation}
		Combining this with \eqref{y1hchi} therefore proves for all $r \in (0,\infty)$, $M \in \{2,3,\dots\}$ that 
		\begin{equation}
		\label{citare}
		\begin{split}
		&\P \Big(\Big\{ \left|\left< e_0, Y_1^M\right>_H\right|^r \geq |c_{M,r}|^{\nicefrac{1}{(1-q)}}\theta_{M,r} \Big\} \cap \big\{ Y_1^M \in \mathbb{H}_{0,r}^{M}  \big\}\Big) 
		\\&=\P \Big(\Big\{ \left|\left< e_0, Y_1^M\right>_H\right| \geq|c_{M,r}|^{\nicefrac{1}{[r(1-q)]}}\,|\theta_{M,r}|^{\nicefrac{1}{r}} \Big\} 
		\\&\quad \cap \left\{ \big\| Y_1^M-\left<e_0, Y^M_1\right>_He_0 \big\|_{L^p(\lambda; \R)}  \leq \tfrac{1}{2}|\rho_{M,r}|^{-M}\left| \left<e_0, Y^M_1\right>_H  \right|\right\}\Big) .
		\end{split}
		\end{equation}
		Moreover, note that the fact that $\forall \,r \in (0,\infty), M \in \{2,3,\dots \}\colon c_{M,r}\in (0,1]$  and $r(1-q)<0$ ensures for all $r \in (0,\infty), M \in \{2,3,\dots \}$ that 
		\begin{equation}
		\label{cmr1}
		|c_{M,r}|^{\nicefrac{1}{[r(1-q)]}}\geq1.
		\end{equation}
		Furthermore, observe that \eqref{eq:theta} establishes for all $r \in (0,\infty), M \in \{2,3,\dots \}$ that 
		\begin{equation}
		|\theta_{M,r}|^{\nicefrac{1}{r}}\geq 2.
		\end{equation}
		Combining this with \eqref{cmr1} proves for all $r \in (0,\infty), M \in \{2,3,\dots \}$ that 
		\begin{equation}
		\tfrac{1}{2}|c_{M,r}|^{\nicefrac{1}{[r(1-q)]}}\,|\theta_{M,r}|^{\nicefrac{1}{r}}\geq 1.
		\end{equation}
		This and \eqref{citare} imply for all   $r \in (0,\infty)$, $M \in \{2,3,\dots\}$ that 
		\begin{equation}
		\label{indev}
		\begin{split}
		&\P \Big(\Big\{ \left|\left< e_0, Y_1^M\right>_H\right|^r \geq |c_{M,r}|^{\nicefrac{1}{(1-q)}}\theta_{M,r} \Big\} \cap \big\{ Y_1^M \in \mathbb{H}_{0,r}^{M}  \big\}\Big) 
		\\&\geq \P \Big(\Big\{ \left|\left< e_0, Y_1^M\right>_H\right| \geq |c_{M,r}|^{\nicefrac{1}{[r(1-q)]}}\,|\theta_{M,r}|^{\nicefrac{1}{r}}\Big\} 
		\\&\quad \cap \left\{ \big\| Y_1^M-\left<e_0, Y^M_1\right>_He_0 \big\|_{L^p(\lambda; \R)}  \leq \tfrac{1}{2}|\rho_{M,r}|^{-M}|c_{M,r}|^{\nicefrac{1}{[r(1-q)]}}\,|\theta_{M,r}|^{\nicefrac{1}{r}}\right\}\Big) 
		\\&\geq \P \Big(\Big\{ |\left< e_0, Y_1^M\right>_H| \geq|c_{M,r}|^{\nicefrac{1}{[r(1-q)]}}\,|\theta_{M,r}|^{\nicefrac{1}{r}} \Big\} 
		\\&\quad \cap \left\{ \big\| Y_1^M-\left<e_0, Y^M_1\right>_He_0 \big\|_{L^p(\lambda; \R)}  \leq |\rho_{M,r}|^{-M} \right\}\Big).
		\end{split}
		\end{equation}
		Next note that it holds for all $M \in \{2,3,\dots \}$ that 
		\begin{equation}
 Y_1^M=P_MS_M\!\left(  P_M(\xi)+\tfrac{T}{M}\!\left(\textstyle\sum_{k=0}^{q}a_k\!\left[P_M(\xi)\right]^k\right) +Z_1^M\right).
		\end{equation}
		Combining this with \eqref{a2} and \eqref{indev}  ensures that for all 
	$\ r \in (0,\infty)$,	$M \in \{2,3,\dots \}$ it holds that 
		\begin{equation}
		\label{NRpoliG}
		\begin{split}
		&\P \Big(\Big\{ \left|\left< e_0, Y_1^M\right>_H\right|^r \geq|c_{M,r}|^{\nicefrac{1}{(1-q)}}\theta_{M,r} \Big\} \cap \big\{ Y_1^M \in \mathbb{H}_{0,r}^{M}  \big\}\Big) 
		\\&\geq \P\!\left(   \left|\left< e_0, Y_1^M\right>_H\right| \geq|c_{M,r}|^{\nicefrac{1}{[r(1-q)]}}\,|\theta_{M,r}|^{\nicefrac{1}{r}}  \right)
		\\&\quad 	\cdot  \P\left( \big\| Y_1^M-\left<e_0, Y^M_1\right>_He_0 \big\|_{L^p(\lambda; \R)}  \leq |\rho_{M,r}|^{-M}\right).
		\end{split}
		\end{equation}
		Furthermore, observe that it holds for all   $r \in (0,\infty)$, $M \in \{2,3,\dots\}$ that 
		\begin{equation}
		\begin{split}
		&\P\Big( \big\|Y_1^M -\left<e_0,Y^M_1\right>_He_0\big\|_{L^p(\lambda; \R)}  \leq |\rho_{M,r}|^{-M}\Big)
		\\&\geq \P\Big( \big\|Y_1^M\big\|_{L^p(\lambda; \R)}  +\left|\left<e_0,Y^M_1\right>_H\right| \leq |\rho_{M,r}|^{-M}\Big)
		\\&\geq  \P\Big( \big\|Y_1^M\big\|_{L^p(\lambda; \R)}  +\big\|Y_1^M\big\|_{L^p(\lambda; \R)}  \leq |\rho_{M,r}|^{-M}\Big)
		\\&=\P\Big( \big\|Y_1^M\big\|_{L^p(\lambda; \R)}    \leq \tfrac{1}{2}|\rho_{M,r}|^{-M}\Big).
		\end{split}
		\end{equation}
		This, \eqref{eq:poly.reg}, and \eqref{comm} assure for all $r \in (0,\infty)$, $M \in \{2,3,\dots\}$ that 
		\begin{align*}
		&\P\Big( \big\|Y_1^M -\left<e_0,Y^M_1\right>_He_0\big\|_{L^p(\lambda; \R)}  \leq |\rho_{M,r}|^{-M}\Big) \numberthis
		\\&\geq \P\Big( \Big\| P_MS_M\big(P_M(\xi)+\tfrac{T}{M}\!\left(\textstyle\sum_{k=0}^{q}a_k\!\left[P_M(\xi)\right]^k\right)+W_{\frac{T}{M}}\big)\Big\|_{L^p(\lambda; \R)}    \leq \tfrac{1}{2}|\rho_{M,r}|^{-M}\Big)
		\\&=\P\!\left( \Big\| S_M\!\left[P_M\!\left(P_M(\xi)+\tfrac{T}{M}\textstyle\sum_{k=0}^{q}a_k\!\left[P_M(\xi)\right]^k\right) + P_M(W_{\nicefrac{T}{M}})\right]\Big\|_{L^p(\lambda; \R)}    \leq \tfrac{1}{2}|\rho_{M,r}|^{-M}\right)
		\\&=\P\!\left( \Big\| S_M\!\left[P_M\!\left(P_M(\xi)+\tfrac{T}{M}\textstyle\sum_{k=0}^{q}a_k\!\left[P_M(\xi)\right]^k\right) - P_M(W_{\nicefrac{T}{M}})\right]\Big\|_{L^p(\lambda; \R)}    \leq \tfrac{1}{2}|\rho_{M,r}|^{-M}\right)\!.
		\end{align*}
		Combining this with Corollary \ref{stima1cor} (with $(\Omega, \mathcal{F}, \P)=(\Omega, \mathcal{F}, \P)$, $T=T$,  $x=\tfrac{1}{2}|\rho_{M,r}|^{-M}$,  $\gamma=\gamma_M$, $y=g_{M,r}$, $p=p$, $\delta=\zeta_{\nu+s}$, $\nu=\nu$, $s=s$, $N=M$, $v=P_M(\xi)+\tfrac{T}{M}\textstyle\sum_{k=0}^{q}a_k\!\left[P_M(\xi)\right]^k$, $S=(H \ni w \mapsto S_M w \in H_{\nu +s})$, $W=W$  for $M \in \{2,3,\dots\}$, $r \in (0,\infty)$ in the notation of Corollary \ref{stima1cor}) ensures for all $r \in (0,\infty)$, $M \in \{2,3,\dots\}$ that 
		\begin{align*}
		\label{crucialpolibisG}
		&\P\Big( \big\|Y_1^M -\left<e_0,Y^M_1\right>_He_0\big\|_{L^p(\lambda; \R)}  \leq |\rho_{M,r}|^{-M}\Big) \numberthis
		\\&\geq  \left[\tfrac{g_{M,r}}{\sqrt{2 \pi \gamma_{M}T}}\right]^{(2M+1)}\exp\!\left(-\tfrac{3M^2}{T}\!\left(    \big\|P_M(\xi)+\tfrac{T}{M}\textstyle\sum_{k=0}^{q}a_k\!\left[P_M(\xi)\right]^k\big\|_H^2+\tfrac{|g_{M,r}|^2}{\gamma_{M}} \right)\right)
		\\&\geq \left[\tfrac{g_{M,r}}{\sqrt{2 \pi \gamma_{M}T}}\right]^{(2M+1)}\exp\!\left( -\tfrac{3M^2}{T}\!\left[\left(\tfrac{T}{M}\!\left[\textstyle\sum_{k=0}^{q}|a_k|\big\|\left[P_M(\xi)\right]^k\big\|_H\right]+\|P_M(\xi)\|_H\right)^2+\tfrac{|g_{M,r}|^2}{\gamma_{M}}\right]\right)
		\\&\geq \left[\tfrac{g_{M,r}}{\sqrt{2 \pi \gamma_{M}T}}\right]^{(2M+1)}\exp\!\left( -\tfrac{3M^2}{T}\!\left[\left(T\left[\textstyle\sum_{k=0}^{q}|a_k|\big\|\left[P_M(\xi)\right]^k\big\|_H\right]+\|\xi\|_H\right)^2+\tfrac{|g_{M,r}|^2}{\gamma_{M}}\right]\right).
		\end{align*}
		Moreover, note that it holds for all $M \in \{2,3,\dots\}$ that 
		\begin{align*}
		\label{poli14G0}
		&T\left(\textstyle\sum_{k=0}^{q}|a_k|\big\|\left[P_M(\xi)\right]^k\big\|_H\right)+\|\xi\|_H \leq
		T\left(\textstyle\sum_{k=0}^{q}|a_k|\|P_M(\xi)\|^k_{L^{2k}(\lambda; \R)}\right)+\|\xi\|_H 
		\\&\leq T\left(\textstyle\sum_{k=0}^{q}|a_k|\|P_M(\xi)\|^k_{L^{2q}(\lambda; \R)}\right)+\|\xi\|_H  
		\\&\leq T\left(\textstyle\sum_{k=0}^{q}|a_k|\|P_M(\xi)\|^k_{L^{p}(\lambda; \R)}\right)+\|\xi\|_{L^{p}(\lambda; \R)}
		\\&\leq T\left(\textstyle\sum_{k=0}^{q}\!\left[|a_k|\left(\sup_{N\in \N}\|P_N(\xi)\|_{L^{p}(\lambda; \R)}\right)^k\right]\right)+\|\xi\|_{L^{p}(\lambda; \R)}. \numberthis
		\end{align*}
		Next note that the fact that $H_{\chi} \subseteq L^{p}(\lambda; \R)$ and the fact that $\forall \,N \in \N \colon \|P_N\|_{L(H)}\leq 1$ ensure that 
		\begin{equation}
		\begin{split}
		&\sup_{M\in \N}\|P_M(\xi)\|_{L^{p}(\lambda; \R)}
		\leq  C	\sup_{M\in \N}\|P_M(\xi)\|_{H_\chi}
		\\&=C\sup_{M\in \N}\big\|(\eta-A)^\chi P_M(\xi)\big\|_{H}
		=C\sup_{M\in \N}\big\|P_M(\eta-A)^\chi \xi\big\|_{H}
		\\&\leq C\sup_{M\in \N}\|P_M\|_{L(H)}\big\|(\eta-A)^\chi \xi\big\|_{H}
		\leq C\sup_{M\in \N}\big\|(\eta-A)^\chi \xi\big\|_{H}
		\\&=C\big\|(\eta-A)^\chi \xi\big\|_{H}=C\|\xi\|_{H_\chi}.
		\end{split}
		\end{equation}
		This and \eqref{poli14G0}  prove for all $M \in \{2,3,\dots\}$ that 
		\begin{align*}
		\label{poli14G}
		&T\left(\textstyle\sum_{k=0}^{q}|a_k|\big\|\left[P_M(\xi)\right]^k\big\|_H\right)+\|\xi\|_H
		\leq T\left(\textstyle\sum_{k=0}^{q}|C|^k|a_k|\|\xi\|_{H_\chi} ^k\right)+\|\xi\|_{L^{p}(\lambda; \R)}
		\\&\leq T\left(\textstyle\sum_{k=0}^{q}|C|^k|a_k|\|\xi\|_{H_\chi} ^k\right)+ C \|\xi\|_{H_\chi} \numberthis
		\\&\leq (q+2) |\!\max\{C,1\}|^q \max\{T,1\}\max\{1,|a_0|,\dots,|a_q|\}\max\{\|\xi\|_{H_\chi}^q,1\}
		=\kappa. 
		\end{align*}
		Combining this with \eqref{crucialpolibisG} assures for all $r \in (0,\infty)$, $M \in \{2,3,\dots\}$ that 
		\begin{equation}
		\label{crucialpoliG}
		\begin{split}
		&\P\Big( \big\|Y_1^M -\left<e_0,Y^M_1\right>_He_0\big\|_{L^p(\lambda; \R)}  \leq |\rho_{M,r}|^{-M}\Big)
		\\&\geq  \left[\tfrac{g_{M,r}}{\sqrt{2 \pi \gamma_{M}T}}\right]^{(2M+1)}\exp\!\left( -\tfrac{3M^2}{T}\!\left(\kappa^2+\tfrac{|g_{M,r}|^2}{\gamma_{M}}\right)\right).
		\end{split}
		\end{equation}
		In addition, note that for all $r \in (0,\infty)$, $M \in \{2,3,\dots\}$ it holds that 
		\begin{align*}
		&\P \Big(\left|\left< e_0, Y_1^M\right>_H\right| \geq|c_{M,r}|^{\nicefrac{1}{[r(1-q)]}}\,|\theta_{M,r}|^{\nicefrac{1}{r}} \Big) \numberthis
		\\&=\P \Big( \left|\left< e_0, P_MS_M\Big(P_M(\xi)+\tfrac{T}{M}\!\left(\textstyle\sum_{k=0}^{q}a_k\!\left[P_M(\xi)\right]^k\right)+W_{\frac{T}{M}}\Big) \right>_H\right| \geq|c_{M,r}|^{\nicefrac{1}{[r(1-q)]}}\,|\theta_{M,r}|^{\nicefrac{1}{r}} \Big).
		\end{align*}
		This, \eqref{eq:generator.commute}, and \eqref{eq:SP.symmetric} prove for all $r \in (0,\infty)$, $M \in \{2,3,\dots\}$ that 
		\begin{align*}
		&\P \Big(\left|\left< e_0, Y_1^M\right>_H\right| \geq|c_{M,r}|^{\nicefrac{1}{[r(1-q)]}}\,|\theta_{M,r}|^{\nicefrac{1}{r}} \Big)
		\\&=\P \Big( \left|\left< e_0, P_M \Big( P_M(\xi)+\tfrac{T}{M}\!\left(\textstyle\sum_{k=0}^{q}a_k\!\left[P_M(\xi)\right]^k\right)+W_{\frac{T}{M}} \Big) \right>_H\right| \geq|c_{M,r}|^{\nicefrac{1}{[r(1-q)]}}\,|\theta_{M,r}|^{\nicefrac{1}{r}} \Big) 
		\\&\geq \P \Big( \left|\left< e_0, P_M W_{\frac{T}{M}}  \right>_H\right|  - \left|\left<e_0,P_M(\xi)+\tfrac{T}{M}\textstyle\sum_{k=0}^{q}a_k\!\left[P_M(\xi)\right]^k \right>_H\right|  \\&\qquad \quad \geq|c_{M,r}|^{\nicefrac{1}{[r(1-q)]}}\,|\theta_{M,r}|^{\nicefrac{1}{r}} \Big). \numberthis
		\end{align*}
		Hence, we obtain for all $r \in (0,\infty)$, $M \in \{2,3,\dots\}$ that 
		\begin{align*}
		&\P \Big(\left|\left< e_0, Y_1^M\right>_H\right| \geq |c_{M,r}|^{\nicefrac{1}{[r(1-q)]}}\,|\theta_{M,r}|^{\nicefrac{1}{r}}\Big) \numberthis
		\\&\geq  \P \Big( \left|\left< e_0, P_M W_{\frac{T}{M}}  \right>_H\right|    \geq|c_{M,r}|^{\nicefrac{1}{[r(1-q)]}}\,|\theta_{M,r}|^{\nicefrac{1}{r}} 
		+ \left|\left<e_0,P_M(\xi)+\tfrac{T}{M}\textstyle\sum_{k=0}^{q}a_k\!\left[P_M(\xi)\right]^k \right>_H\right|\Big)
		\\&\geq  \P \Big( \left|\left< e_0, P_M W_{\frac{T}{M}}  \right>_H\right|    \geq |c_{M,r}|^{\nicefrac{1}{[r(1-q)]}}\,|\theta_{M,r}|^{\nicefrac{1}{r}} + \big\| P_M(\xi)+\tfrac{T}{M}\textstyle\sum_{k=0}^{q}a_k\!\left[P_M(\xi)\right]^k\big\|_H\Big).
		\end{align*}
		This ensures for all $r \in (0,\infty)$, $M \in \{2,3,\dots\}$ that 
		\begin{align*}
		\label{poli13G}
		&\P \Big(\left|\left< e_0, Y_1^M\right>_H\right| \geq |c_{M,r}|^{\nicefrac{1}{[r(1-q)]}}\,|\theta_{M,r}|^{\nicefrac{1}{r}}\Big) \numberthis
		\\&\geq  \P \Big( \left|\left< e_0, P_M W_{\frac{T}{M}}  \right>_H\right|    \geq|c_{M,r}|^{\nicefrac{1}{[r(1-q)]}}\,|\theta_{M,r}|^{\nicefrac{1}{r}} + \tfrac{T}{M}\!\left(\textstyle\sum_{k=0}^{q}|a_k|\big\|\left[P_M(\xi)\right]^k\big\|_H\right)+\|\xi\|_H\Big)
		\\&\geq  \P \Big( \left|\left< e_0, P_M W_{\frac{T}{M}}  \right>_H\right|    \geq|c_{M,r}|^{\nicefrac{1}{[r(1-q)]}}\,|\theta_{M,r}|^{\nicefrac{1}{r}}+ T\left(\textstyle\sum_{k=0}^{q}|a_k|\big\|\left[P_M(\xi)\right]^k\big\|_H\right)+\|\xi\|_H\Big).
		\end{align*}
		Combining this with \eqref{poli14G} proves  for all $r \in (0,\infty)$, $M \in \{2,3,\dots\}$ that 
		\begin{equation}
		\begin{split}
		&\P \Big(\left|\left< e_0, Y_1^M\right>_H\right| \geq |c_{M,r}|^{\nicefrac{1}{[r(1-q)]}}\,|\theta_{M,r}|^{\nicefrac{1}{r}}\Big)
		\\& \geq  \P \Big( \left|\left< e_0, P_M W_{\frac{T}{M}}  \right>_H\right|    \geq|c_{M,r}|^{\nicefrac{1}{[r(1-q)]}}\,|\theta_{M,r}|^{\nicefrac{1}{r}} + \kappa\Big)
		\\&= \P \Big( M^{-\nicefrac{1}{2}}\left|\left< e_0, P_M W_{T}  \right>_H\right|    \geq |c_{M,r}|^{\nicefrac{1}{[r(1-q)]}}\,|\theta_{M,r}|^{\nicefrac{1}{r}} + \kappa\Big)
		\\&=\P \Big( T^{-\nicefrac{1}{2}}\left|\left< e_0, P_M W_{T}  \right>_H\right|    \geq M^{\nicefrac{1}{2}}T^{-\nicefrac{1}{2}}\!\left(|c_{M,r}|^{\nicefrac{1}{[r(1-q)]}}\,|\theta_{M,r}|^{\nicefrac{1}{r}} + \kappa\right)\Big).
		\end{split}
		\end{equation}
		Therefore, we obtain for all $r \in (0,\infty)$, $M \in \{2,3,\dots\}$ that 
		\begin{equation}
		\begin{split}
		&\P \Big(\left|\left< e_0, Y_1^M\right>_H\right| \geq|c_{M,r}|^{\nicefrac{1}{[r(1-q)]}}\,|\theta_{M,r}|^{\nicefrac{1}{r}}\Big)
		\\&\geq 2 \int_{M^{\nicefrac{1}{2}}T^{-\nicefrac{1}{2}}\!\left(|c_{M,r}|^{\nicefrac{1}{[r(1-q)]}}\,|\theta_{M,r}|^{\nicefrac{1}{r}} + \kappa\right)}^{\infty} \tfrac{1}{\sqrt{2\pi}} e^{-\frac{y^2}{2}} dy 
		\\&\geq \tfrac{2}{\sqrt{2\pi}}\int_{\left(\frac{T}{M}\right)^{-\nicefrac{1}{2}}\!\left(|c_{M,r}|^{\nicefrac{1}{[r(1-q)]}}\,|\theta_{M,r}|^{\nicefrac{1}{r}} + \kappa\right)}^{2\left(\frac{T}{M}\right)^{-\nicefrac{1}{2}}\!\left(|c_{M,r}|^{\nicefrac{1}{[r(1-q)]}}\,|\theta_{M,r}|^{\nicefrac{1}{r}} + \kappa\right)}e^{-\frac{y^2}{2}} dy.
		\end{split}
		\end{equation}
		This and the fact that $\forall \, a, b \in \R \colon |a+b|^2 \leq 2|a|^2 +2|b|^2$ imply for all $r \in (0,\infty)$, $M \in \{2,3,\dots\}$ that 
		\begin{align*}
		&\P \Big(\left|\left< e_0, Y_1^M\right>_H\right| \geq|c_{M,r}|^{\nicefrac{1}{[r(1-q)]}}\,|\theta_{M,r}|^{\nicefrac{1}{r}}\Big)
		\\&\geq \tfrac{2}{\sqrt{2\pi}}\!\left(\tfrac{T}{M}\right)^{-\nicefrac{1}{2}}\!\left(|c_{M,r}|^{\nicefrac{1}{[r(1-q)]}}\,|\theta_{M,r}|^{\nicefrac{1}{r}} + \kappa\right)\,\exp\!\left(-\tfrac{4M\left(|c_{M,r}|^{\nicefrac{1}{[r(1-q)]}}\,|\theta_{M,r}|^{\nicefrac{1}{r}} + \kappa\right)^2}{2T}   \right)
		\\&\geq \tfrac{|c_{M,r}|^{\nicefrac{1}{[r(1-q)]}}\,|\theta_{M,r}|^{\nicefrac{1}{r}}}{\sqrt{2\pi T}} \, \exp\!\left(-\tfrac{4M\left(|c_{M,r}|^{\nicefrac{2}{[r(1-q)]}}\,|\theta_{M,r}|^{\nicefrac{2}{r}} + \kappa^2\right)}{T}   \right). \numberthis
		\end{align*}
		Combining this with \eqref{NRpoliG} and \eqref{crucialpoliG} ensures for all $r \in (0,\infty)$, $M \in \{2,3,\dots\}$ that 
		\begin{equation}
		\begin{split}
		&\P \Big(\Big\{ \left|\left< e_0, Y_1^M\right>_H\right|^r \geq|c_{M,r}|^{\nicefrac{1}{1-q}}\theta_{M,r} \Big\} \cap \big\{ Y_1^M \in \mathbb{H}_{0,r}^{M}  \big\}\Big) 
		\\&\geq \tfrac{|c_{M,r}|^{\nicefrac{1}{[r(1-q)]}}\,|\theta_{M,r}|^{\nicefrac{1}{r}}}{\sqrt{2\pi T}} \, \exp\!\left(-\tfrac{4M\left(|c_{M,r}|^{\nicefrac{2}{[r(1-q)]}}\,|\theta_{M,r}|^{\nicefrac{2}{r}} + \kappa^2\right)}{T}   \right)
		\\&\quad \cdot \left[\tfrac{g_{M,r}}{\sqrt{2 \pi \gamma_{M}T}}\right]^{(2M+1)}\exp\!\left( -\tfrac{3M^2}{T}\!\left(\kappa^2+\tfrac{|g_{M,r}|^2}{\gamma_{M}}\right)\right).
		\end{split}
		\end{equation}
		This and \eqref{poli12G}  establish for all $r \in (0,\infty)$, $M \in \{2,3,\dots\}$ that 
		\begin{align*}
		&\E\Big[\left|\left< e_0, Y_M^M\right>_H\right|^r\Big] \geq |\theta_{M,r}|^{(q^{(M-1)})} \,\tfrac{|c_{M,r}|^{\nicefrac{1}{[r(1-q)]}}\,|\theta_{M,r}|^{\nicefrac{1}{r}}}{\sqrt{2\pi T}} \, \exp\!\left(-\tfrac{4M\left(|c_{M,r}|^{\nicefrac{2}{[r(1-q)]}}|\theta_{M,r}|^{\nicefrac{2}{r}} + \kappa^2\right)}{T}   \right) 
		\\&\quad \cdot \left[\tfrac{g_{M,r}}{\sqrt{2 \pi \gamma_{M}T}}\right]^{(2M+1)}
		\exp\!\left( -\tfrac{3M^2}{T}\!\left(\kappa^2+\tfrac{|g_{M,r}|^2}{\gamma_{M}}\right)\right)
		\left[\tfrac{z_{M,r} y_M}{2\pi \gamma_{M}T}\right]^{M(2M+1)} 
		\\&\quad \cdot \exp\!\left( -\tfrac{3M^3}{\gamma_{M}T}\!\left(|z_{M,r}|^2+|y_M|^2\right)\right). \numberthis
		\end{align*}
		Hence, we obtain that for all $r \in (0,\infty)$, $M \in \{2,3,\dots\}$ it holds that 
		\begin{equation}
		\begin{split}
		&\E\Big[\left|\left< e_0, Y_M^M\right>_H\right|^r\Big]
		\geq \exp\!\bigg( q^{(M-1)} \ln(\theta_{M,r})+ \ln\!\left(|c_{M,r}|^{\nicefrac{1}{[r(1-q)]}}\,|\theta_{M,r}|^{\nicefrac{1}{r}}\right)
		\\&-\tfrac{1}{2}\ln(2 \pi T) - \tfrac{4M}{T}\!\left(|c_{M,r}|^{\nicefrac{2}{[r(1-q)]}}\,|\theta_{M,r}|^{\nicefrac{2}{r}} + \kappa^2\right)  + (2M+1)\ln\!\left(\tfrac{g_{M,r}}{\sqrt{2 \pi \gamma_{M}T}} \right) 
		\\&-\tfrac{3M^2}{T}\!\left(\kappa^2+\tfrac{|g_{M,r}|^2}{\gamma_{M}}\right) +(2M^2+M) \ln\!\left(\tfrac{z_{M,r} y_M}{2\pi \gamma_{M} T}\right) -\tfrac{3M^3}{\gamma_{M}T}\!\left(|z_{M,r}|^2+|y_M|^2\right) \bigg).
		\end{split}
		\end{equation}
		The fact that $\forall \, N \in \N, r \in (0,\infty)\colon \theta_{N,r}\geq 2^r $  therefore assures for all $r \in (0,\infty)$, $M \in \{2,3,\dots\}$ that 
		\begin{equation}
		\begin{split}
		&\E\Big[\left|\left< e_0, Y_M^M\right>_H\right|^r\Big]
		\geq \exp\!\bigg( q^{(M-1)} r\ln(2)+ \ln\!\left(|c_{M,r}|^{\nicefrac{1}{[r(1-q)]}}\,|\theta_{M,r}|^{\nicefrac{1}{r}}\right)
		\\&-\tfrac{1}{2}\ln(2 \pi T) - \tfrac{4M}{T}\!\left(|c_{M,r}|^{\nicefrac{2}{[r(1-q)]}}\,|\theta_{M,r}|^{\nicefrac{2}{r}} + \kappa^2\right)  + (2M+1)\ln\!\left(\tfrac{g_{M,r}}{\sqrt{2 \pi \gamma_{M}T}} \right) 
		\\&-\tfrac{3M^2}{T}\!\left(\kappa^2+\tfrac{|g_{M,r}|^2}{\gamma_{M}}\right) +(2M^2+M) \ln\!\left(\tfrac{z_{M,r} y_M}{2\pi \gamma_{M} T}\right) -\tfrac{3M^3}{\gamma_{M}T}\!\left(|z_{M,r}|^2+|y_M|^2\right) \bigg).
		\end{split}
		\end{equation}
		Combining this with the fact that $\ln(2)\geq \tfrac{1}{2}$ and the fact that $\tfrac{1}{2}\ln(2\pi T)\leq \pi T$ proves for all $r \in (0,\infty)$ that 
		\begin{equation}
		\begin{split}
		&\liminf_{M\to \infty} \E\Big[\left|\left< e_0, Y_M^M\right>_H\right|^r\Big] 
		\geq \liminf_{M\to \infty} 	\bigg[\exp\!\bigg( \tfrac{r}{2}q^{(M-1)} 
		\\&\quad + \ln\!\left(|c_{M,r}|^{\nicefrac{1}{[r(1-q)]}}\,|\theta_{M,r}|^{\nicefrac{1}{r}}\right)-\pi T - \tfrac{4M}{T}\!\left(|c_{M,r}|^{\nicefrac{2}{[r(1-q)]}}\,|\theta_{M,r}|^{\nicefrac{2}{r}} + \kappa^2\right)
		\\&\quad + (2M+1)\ln\!\left(\tfrac{g_{M,r}}{\sqrt{2 \pi \gamma_{M}T}} \right)  -\tfrac{3M^2}{T}\!\left(\kappa^2+\tfrac{|g_{M,r}|^2}{\gamma_{M}}\right) +(2M^2+M) \ln\!\left(\tfrac{z_{M,r} y_M}{2\pi \gamma_{M} T}\right) \\&\quad -\tfrac{3M^3}{\gamma_{M}T}\!\left(|z_{M,r}|^2+|y_M|^2\right) \bigg)\bigg].
		\end{split}
		\end{equation}
		This ensures for all $r \in (0,\infty)$ that 
		\begin{align*}
		&\liminf_{M\to \infty} \E\Big[\left|\left< e_0, Y_M^M\right>_H\right|^r\Big] 
		\geq \liminf_{M\to \infty} \bigg[\exp\!\bigg( \tfrac{r}{10}q^{(M-1)} + \ln\!\left(|c_{M,r}|^{\nicefrac{1}{[r(1-q)]}}\,|\theta_{M,r}|^{\nicefrac{1}{r}}\right)
		\\&\quad -\pi T  -\tfrac{4M}{T}\!\left(|c_{M,r}|^{\nicefrac{2}{[r(1-q)]}}\,|\theta_{M,r}|^{\nicefrac{2}{r}} + \kappa^2\right)
		+\tfrac{r}{10}q^{(M-1)}  \numberthis
		\\&\quad + (2M+1)\ln\!\left(\tfrac{g_{M,r}}{\sqrt{2 \pi \gamma_{M}T}} \right)  
		+\tfrac{r}{10}q^{(M-1)} -\tfrac{3M^2}{T}\!\left(\kappa^2+\tfrac{|g_{M,r}|^2}{\gamma_{M}}\right) 
		\\&\quad +\tfrac{r}{10}q^{(M-1)} +(2M^2+M) \ln\!\left(\tfrac{z_{M,r} y_M}{2\pi \gamma_{M} T}\right)
		+\tfrac{r}{10}q^{(M-1)} -\tfrac{3M^3}{\gamma_{M}T}\!\left(|z_{M,r}|^2+|y_M|^2\right) \bigg)\bigg].
		\end{align*}
		Hence, we obtain for all $r \in (0,\infty)$ that 
		\begin{align*}
		\label{DIV}
		&\liminf_{M\to \infty} \E\Big[\left|\left< e_0, Y_M^M\right>_H\right|^r\Big] 
		\geq \exp\!\bigg( \liminf_{M\to \infty} \Big[ \tfrac{r}{10}q^{(M-1)} + \ln\!\left(|c_{M,r}|^{\nicefrac{1}{[r(1-q)]}}\,|\theta_{M,r}|^{\nicefrac{1}{r}}\right)
		\\&\quad -\pi T  -\tfrac{4M}{T}\!\left(|c_{M,r}|^{\nicefrac{2}{[r(1-q)]}}\,|\theta_{M,r}|^{\nicefrac{2}{r}} + \kappa^2\right)\Big]
		+ \liminf_{M\to \infty} \Big[     \tfrac{r}{10}q^{(M-1)} + (2M+1)
		\\&\quad \cdot \ln\!\left(\tfrac{g_{M,r}}{\sqrt{2 \pi \gamma_{M}T}} \right) \!\Big]
		+ \liminf_{M\to \infty} \Big[ \tfrac{r}{10}q^{(M-1)} -\tfrac{3M^2}{T}\!\left(\kappa^2+\tfrac{|g_{M,r}|^2}{\gamma_{M}}\right) \Big]
		\\& + \liminf_{M\to \infty} \Big[ \tfrac{r}{10}q^{(M-1)} +(2M^2+M) \ln\!\left(\tfrac{z_{M,r} y_M}{2\pi \gamma_{M} T}\right)\Big] 
		\\&+ \liminf_{M\to \infty} \Big[ \tfrac{r}{10}q^{(M-1)} -\tfrac{3M^3}{\gamma_{M}}\left(|z_{M,r}|^2+|y_M|^2\right) \Big]\bigg).  \numberthis
		\end{align*}
		Moreover, note that it holds for all $r \in (0,\infty)$ that 
		\begin{equation}
		\label{log0}
		\begin{split}
		&\liminf_{M\to \infty} \Big[\tfrac{r}{10}q^{(M-1)} + \ln\!\left(|c_{M,r}|^{\nicefrac{1}{[r(1-q)]}}\,|\theta_{M,r}|^{\nicefrac{1}{r}}\right)
		\\&\quad -\pi T- \tfrac{4M}{T}\!\left(|c_{M,r}|^{\nicefrac{2}{[r(1-q)]}}\,|\theta_{M,r}|^{\nicefrac{2}{r}} + \kappa^2\right)\Big]
		\\&=\liminf_{M\to \infty}\bigg[\tfrac{r}{10}q^{(M-1)} +\ln\!\left(\left[\tfrac{T|a_q|}{4M}\right]^{\nicefrac{1}{(1-q)}}\,|\theta_{M,r}|^{\nicefrac{1}{r}}\right)
		\\&\quad -\pi T- \tfrac{4M}{T}\!\left(\left[\tfrac{T|a_q|}{4M}\right]^{\nicefrac{2}{(1-q)}}\,|\theta_{M,r}|^{\nicefrac{2}{r}} + \kappa^2\right)\bigg]
		\\&=\liminf_{M\to \infty}\bigg[\tfrac{r}{10}q^{(M-1)}+ \ln\!\left(\left[\tfrac{T|a_q|}{4M}\right]^{\nicefrac{1}{(1-q)}}\!\left[\tfrac{4T\vartheta +8M}{T|a_q|}\right]\right)
		\\&\quad -\pi T-\tfrac{4M}{T}\!\left(\left[\tfrac{T|a_q|}{4M}\right]^{\nicefrac{2}{(1-q)}}\!\left[\tfrac{4T\vartheta +8M}{T|a_q|}\right]^2 + \kappa^2\right)\bigg].
		\end{split}
		\end{equation}
		In addition, observe that it holds for all $M \in \{2,3,\dots\}$ that 
		\begin{equation}
		\begin{split}
		&\ln\!\left(\left[\tfrac{T|a_q|}{4M}\right]^{\nicefrac{1}{(1-q)}}\!\left[\tfrac{4T\vartheta +8M}{T|a_q|}\right]\right)
		\geq \ln\!\left(\left[\tfrac{T|a_q|}{4M}\right]^{\nicefrac{1}{(1-q)}}\!\left[\tfrac{4T\vartheta +8}{T|a_q|}\right]\right)
		\\&=\ln\!\left(M^{\nicefrac{1}{(q-1)}}\!\left[\tfrac{T|a_q|}{4}\right]^{\nicefrac{1}{(1-q)}}\!\left[\tfrac{4T\vartheta +8}{T|a_q|}\right]\right)
		\\&=\ln\!\left(  M^{\nicefrac{1}{(q-1)}} \right)+\ln\!\left( \left[\tfrac{T|a_q|}{4}\right]^{\nicefrac{1}{(1-q)}}\!\left[\tfrac{4T\vartheta +8}{T|a_q|}\right] \right)
		\\&=\tfrac{1}{q-1}\ln(M)+\ln\!\left( \left[\tfrac{T|a_q|}{4}\right]^{\nicefrac{1}{(1-q)}}\!\left[\tfrac{4T\vartheta +8}{T|a_q|}\right] \right).
		\end{split}
		\end{equation}
		The fact that $q >1$ and the fact that $\forall \, M \in \{2,3,\dots\} \colon \ln(M)>0$ therefore imply for all $M \in \{2,3,\dots\}$ that 
		\begin{equation}
		\begin{split}
		&\ln\!\left(\left[\tfrac{T|a_q|}{4M}\right]^{\nicefrac{1}{(1-q)}}\!\left[\tfrac{4T\vartheta +8M}{T|a_q|}\right]\right)
		\geq \ln\!\left( \left[\tfrac{T|a_q|}{4}\right]^{\nicefrac{1}{(1-q)}}\!\left[\tfrac{4T\vartheta +8}{T|a_q|}\right] \right).
		\end{split}
		\end{equation}
		Combining this with \eqref{log0} proves for all $r \in (0,\infty)$ that 
		\begin{equation}
		\label{DIV1}
		\begin{split}
		&\liminf_{M\to \infty} \Big[\tfrac{r}{10}q^{(M-1)} + \ln\!\left(|c_{M,r}|^{\nicefrac{1}{[r(1-q)]}}\,|\theta_{M,r}|^{\nicefrac{1}{r}}\right)
		\\&\quad -\pi T- \tfrac{4M}{T}\!\left(|c_{M,r}|^{\nicefrac{2}{[r(1-q)]}}\,|\theta_{M,r}|^{\nicefrac{2}{r}} + \kappa^2\right)\Big]
		\\&\geq \liminf_{M\to \infty}\bigg[\tfrac{r}{10}q^{(M-1)}+ \ln\!\left( \left[\tfrac{T|a_q|}{4}\right]^{\nicefrac{1}{(1-q)}}\!\left[\tfrac{4T\vartheta +8}{T|a_q|}\right] \right)
		\\&\quad -\pi T-\tfrac{4M}{T}\!\left(\left[\tfrac{T|a_q|}{4M}\right]^{\nicefrac{2}{(1-q)}}\!\left[\tfrac{4T\vartheta +8M}{T|a_q|}\right]^2 + \kappa^2\right)\bigg]=\infty.
		\end{split}
		\end{equation}
		Furthermore, observe that it holds for all $r \in (0,\infty)$ that 
		\begin{align*}
		&\liminf_{M\to \infty} \Big[\tfrac{r}{10}q^{(M-1)} + (2M+1)\ln\!\left(\tfrac{g_{M,r}}{\sqrt{2 \pi \gamma_{M}T}} \right) \!\Big] \numberthis
		\\&=\liminf_{M\to \infty} \Big[\tfrac{r}{10}q^{(M-1)}  +(2M+1)\ln\!\left(g_{M,r} \right)-(2M+1)\ln\!\big(\sqrt{2 \pi \gamma_{M}T}\big)\Big]
		\\&=\liminf_{M\to \infty} \Big[\tfrac{r}{10}q^{(M-1)}   +(2M+1)\ln\!\Big(\tfrac{1}{2\zeta_{\nu+s}} \left[\tfrac{T}{M}\right]^{(\nu+s)}\big\|(\eta-A)^{-s}\big\|_{L(H,L^p(   \lambda   ;\R)   )}^{-1}|\rho_{M,r}|^{-M}\Big) \\&\quad-(2M+1)\ln\!\big(\sqrt{2 \pi \gamma_{M}T}\big)\Big]
		\\&=\liminf_{M\to \infty} \bigg[\tfrac{r}{10}q^{(M-1)}   +(2M+1)\ln\!\bigg(\tfrac{1}{2\zeta_{\nu+s}} \left[\tfrac{T}{M}\right]^{(\nu+s)}\big\|(\eta-A)^{-s}\big\|_{L(H,L^p(   \lambda   ;\R)   )}^{-1} 
		\\&\left(  8\vartheta^2\max\{C,1\}\max\{T,1\}\tfrac{\zeta_{\chi} |M|^\chi}{|c_{M,r}|^{\nicefrac{1}{r}}\min\{T,1\}}    \right)^{-M}\bigg)-(2M+1)\ln\!\big(\sqrt{2 \pi \gamma_{M}T}\big)\bigg].
		\end{align*}
		Hence, we obtain for all $r \in (0,\infty)$ that 
		\begin{equation}\label{log1}
		\begin{split}
		&\liminf_{M\to \infty} \Big[\tfrac{r}{10}q^{(M-1)} + (2M+1)\ln\!\left(\tfrac{g_{M,r}}{\sqrt{2 \pi \gamma_{M}T}} \right) \!\Big]
		\\&=\liminf_{M\to \infty} \bigg[\tfrac{r}{10}q^{(M-1)}   +(2M+1)\ln\!\bigg(\tfrac{1}{2\zeta_{\nu+s}} \left[\tfrac{T}{M}\right]^{(\nu+s)}\big\|(\eta-A)^{-s}\big\|_{L(H,L^p(   \lambda   ;\R)   )}^{-1} 
		\\&\left( \tfrac{T|a_q|\min\{T,1\}}{32M\vartheta^2\max\{C,1\}\max\{T,1\}\zeta_{\chi} |M|^\chi}    \right)^{M} \bigg)-(2M+1)\ln\!\big(\sqrt{2 \pi \gamma_{M}T}\big)\bigg].
		\end{split}
		\end{equation}
		Moreover, observe that it holds for all $M \in \N$ that
		\begin{equation}
		\label{log2}
		\begin{split}
		&\ln\!\left(\tfrac{1}{2\zeta_{\nu+s}} \left[\tfrac{T}{M}\right]^{(\nu+s)}\big\|(\eta-A)^{-s}\big\|_{L(H,L^p(   \lambda   ;\R)   )}^{-1} 
		\left( \tfrac{T|a_q|\min\{T,1\}}{32\vartheta^2\max\{C,1\}\max\{T,1\}\zeta_{\chi} M^{(1+\chi)}}    \right)^{M} \right)
		\\&=\ln\!\left(  \tfrac{1}{2\zeta_{\nu+s}} \left[\tfrac{T}{M}\right]^{(\nu+s)}\big\|(\eta-A)^{-s}\big\|_{L(H,L^p(   \lambda   ;\R)   )}^{-1}   \right)
		\\&\qquad +\ln\!\left( 	\left( \tfrac{T|a_q|\min\{T,1\}}{32\vartheta^2\max\{C,1\}\max\{T,1\}\zeta_{\chi} M^{(1+\chi)}}    \right)^{M} \right).
		\end{split}
		\end{equation} 
		Next note that it holds for all $M \in \N$ that
		\begin{equation}
		\begin{split}
		&\ln\!\left( 	\left( \tfrac{T|a_q|\min\{T,1\}}{32\vartheta^2\max\{C,1\}\max\{T,1\}\zeta_{\chi} M^{(1+\chi)}}    \right)^{M}  \right)=M\ln\!\left(  \tfrac{T|a_q|\min\{T,1\}}{32\vartheta^2\max\{C,1\}\max\{T,1\}\zeta_{\chi} M^{(1+\chi)}}      \right)
		\\&=M\ln\!\left(   T|a_q|\min\{T,1\} \right)
		-M\ln\!\left(  32\vartheta^2\max\{C,1\}\max\{T,1\}\zeta_{\chi} M^{(1+\chi)}\right).
		\end{split}
		\end{equation}
		Combining this with \eqref{log1} and \eqref{log2} proves for all $r \in (0,\infty)$ that 
		\begin{align*}
		\label{log4}
		&\liminf_{M\to \infty} \Big[\tfrac{r}{10}q^{(M-1)} + (2M+1)\ln\!\left(\tfrac{g_{M,r}}{\sqrt{2 \pi \gamma_{M}T}} \right) \!\Big]
		\\&=\liminf_{M\to \infty} \bigg(\tfrac{r}{10}q^{(M-1)} +(2M+1)\Big[\ln\!\left(  \tfrac{1}{2\zeta_{\nu+s}} \left[\tfrac{T}{M}\right]^{(\nu+s)}\big\|(\eta-A)^{-s}\big\|_{L(H,L^p(   \lambda   ;\R)   )}^{-1}   \right)
		\\&\quad +M\ln\!\left(   T|a_q|\min\{T,1\} \right)
		-M\ln\!\left(  32\vartheta^2\max\{C,1\}\max\{T,1\}\zeta_{\chi} M^{(1+\chi)}\right)
		\\&\quad -\ln\!\big(\sqrt{2 \pi \gamma_{M}T}\big)\Big]\bigg).  \numberthis
		\end{align*} 
		Furthermore, observe that the fact that $\forall \, x \in (0,\infty)\colon \ln(x)\leq x$ assures that for all $M \in \N$ it holds that
		\begin{equation}
		\begin{split}
		&\ln\!\left(  \tfrac{1}{2\zeta_{\nu+s}} \left[\tfrac{T}{M}\right]^{(\nu+s)}\big\|(\eta-A)^{-s}\big\|_{L(H,L^p(   \lambda   ;\R)   )}^{-1}   \right)
		\\&=\ln\!\left( \tfrac{T^{ (\nu+s) }    }{2\zeta_{\nu+s}} \big\|(\eta-A)^{-s}\big\|_{L(H,L^p(   \lambda   ;\R)   )}^{-1} M^{(-\nu-s)} \right)
		\\&=\ln\!\left( \tfrac{T^{ (\nu+s) }    }{2\zeta_{\nu+s}} \big\|(\eta-A)^{-s}\big\|_{L(H,L^p(   \lambda   ;\R)   )}^{-1}  \right)
		+\ln\!\left(M^{(-\nu-s)}\right)
		\\&=\ln\!\left( \tfrac{T^{ (\nu+s) }    }{2\zeta_{\nu+s}} \big\|(\eta-A)^{-s}\big\|_{L(H,L^p(   \lambda   ;\R)   )}^{-1}  \right)
		-(\nu+s)\ln(M)
		\\&\geq \ln\!\left( \tfrac{T^{ (\nu+s) }    }{2\zeta_{\nu+s}} \big\|(\eta-A)^{-s}\big\|_{L(H,L^p(   \lambda   ;\R)   )}^{-1}  \right)
		-(\nu+s)M.
		\end{split}
		\end{equation}
		This and \eqref{log4} imply for all $r \in (0,\infty)$ that 
		\begin{align*}
		&\liminf_{M\to \infty} \Big[\tfrac{r}{10}q^{(M-1)} + (2M+1)\ln\!\left(\tfrac{g_{M,r}}{\sqrt{2 \pi \gamma_{M}T}} \right) \!\Big]
		\\&\geq \liminf_{M\to \infty} \bigg(\tfrac{r}{10}q^{(M-1)} +(2M+1)\Big[\ln\!\left( \tfrac{T^{ (\nu+s) }    }{2\zeta_{\nu+s}} \big\|(\eta-A)^{-s}\big\|_{L(H,L^p(   \lambda   ;\R)   )}^{-1}  \right)
		-(\nu+s)M
		\\&\quad +M\ln\!\left(   T|a_q|\min\{T,1\} \right)
		-M\ln\!\left(  32\vartheta^2\max\{C,1\}\max\{T,1\}\zeta_{\chi} M^{(1+\chi)}\right) 
		\\&\quad -\ln\!\big(\sqrt{2 \pi \gamma_{M}T}\big)\Big]\bigg).  \numberthis
		\end{align*}
		The fact that $\forall \, x \in (0,\infty)\colon \ln(x)\leq x$ therefore ensures for all $r \in (0,\infty)$ that 
		\begin{align*}
		\label{log3}
		&\liminf_{M\to \infty} \Big[\tfrac{r}{10}q^{(M-1)} + (2M+1)\ln\!\left(\tfrac{g_{M,r}}{\sqrt{2 \pi \gamma_{M}T}} \right) \!\Big]
		\\&\geq \liminf_{M\to \infty} \bigg(\tfrac{r}{10}q^{(M-1)} +(2M+1)\Big[\ln\!\left( \tfrac{T^{ (\nu+s) }    }{2\zeta_{\nu+s}} \big\|(\eta-A)^{-s}\big\|_{L(H,L^p(   \lambda   ;\R)   )}^{-1}  \right)
		-(\nu+s)M
		\\&\quad +M\ln\!\left(   T|a_q|\min\{T,1\} \right)
		-32\vartheta^2\max\{C,1\}\max\{T,1\}\zeta_{\chi} M^{(2+\chi)} 
		\\&\quad -\ln\!\big(\sqrt{2 \pi \gamma_{M}T}\big)\Big]\bigg).  \numberthis
		\end{align*}
		Moreover, note that it holds for  all $N \in \N$  that 
		\begin{equation}
		\label{stima1sigma}
		\begin{split}
		&\gamma_N=\sum_{n=-N}^{N}(\eta+4\pi^2 n^2)^{-2\nu}=\sum_{n=-N}^{N}
		\frac{1}{(\eta+4\pi^2 n^2)^{\,2\nu}}
		\leq \sum_{n=-N}^{N}
		\frac{1}{\eta^{\,2\nu}}
		=\frac{2N+1}{\eta^{\,2\nu}}\leq \frac{3N}{\eta^{\,2\nu}}.
		\end{split}
		\end{equation}
		This and the fact that  $\forall \, x \in (0,\infty)\colon \ln(x)\leq x$ imply for all $M \in \N$ that
		\begin{equation}
		\begin{split}
		&(2M+1)\ln\!\big(\sqrt{2 \pi \gamma_{M}T}\big) \leq 3M\ln\!\big(\sqrt{2 \pi \gamma_{M}T}\big) =
		\tfrac{3M}{2}\ln(2 \pi \gamma_{M}T)
		\\&\leq 3M\pi \gamma_{M}T\leq 3M\pi T\tfrac{3M}{\eta^{\,2\nu}}=\tfrac{9M^2\pi T}{\eta^{\,2\nu}}.
		\end{split}
		\end{equation}
		Combining this with \eqref{log3} demonstrates for all $r \in (0,\infty)$ that 
		\begin{align*}
		\label{DIV2}
		&\liminf_{M\to \infty} \Big[\tfrac{r}{10}q^{(M-1)} + (2M+1)\ln\!\left(\tfrac{g_{M,r}}{\sqrt{2 \pi \gamma_{M}T}} \right) \!\Big]  
		\\&\geq \liminf_{M\to \infty} \Big[\tfrac{r}{10}q^{(M-1)} +(2M+1)\ln\!\left( \tfrac{T^{ (\nu+s) }    }{2\zeta_{\nu+s}} \big\|(\eta-A)^{-s}\big\|_{L(H,L^p(   \lambda   ;\R)   )}^{-1}  \right)
		-(\nu+s)M(2M+1)
		\\&\quad +M(2M+1)\ln\!\left(   T|a_q|\min\{T,1\} \right)
		- 32\vartheta^2\max\{C,1\}\max\{T,1\}\zeta_{\chi} M^{(2+\chi)} (2M+1)
		\\&\quad -\tfrac{9M^2\pi T}{\eta^{\,2\nu}}\Big]=\infty. \numberthis
		\end{align*}
		Furthermore, observe that it holds for all $N \in \N$  that 
		\begin{equation}
		\label{stima2sigma}
		\begin{split}
		\gamma_N &= \sum_{n=-N}^{N}
		\frac{1}{(\eta+4\pi^2 n^2)^{\,2\nu}}
		\geq \sum_{n=-N}^{N}
		\frac{1}{(\eta N^2+4\pi^2 N^2)^{\,2\nu}}
		\\&=\sum_{n=-N}^{N}
		\frac{1}{N^{4\nu}(\eta+4\pi^2)^{\,2\nu}}
		= \frac{(2N+1)}{N^{4\nu}(\eta+4\pi^2)^{\,2\nu}}
		\geq \frac{1}{N^{4\nu}(\eta+4\pi^2)^{\,2\nu}}.
		\end{split}
		\end{equation}
		This implies for all $r \in (0,\infty)$ that 
		\begin{align*}
		\label{eq:problem.101}
		&\liminf_{M\to \infty} \left[\tfrac{r}{10}q^{(M-1)} 
		-\tfrac{3M^2}{T}\!\left(\kappa^2+\tfrac{|g_{M,r}|^2}{\gamma_{M}}\right)\right]
		=\liminf_{M\to \infty} \Big[\tfrac{r}{10}q^{(M-1)} 
		-\tfrac{3M^2\kappa^2}{T}-\tfrac{3M^2|g_{M,r}|^2}{T\gamma_M}\Big]
		\\&=\liminf_{M\to \infty} \bigg[\tfrac{r}{10}q^{(M-1)} 
		-\tfrac{3M^2\kappa^2}{T}-\tfrac{3M^2}{4T\gamma_M|\zeta_{\nu+s}|^2} \left[\tfrac{T}{M}\right]^{2(\nu+s)}
		\big\|(\eta-A)^{-s}
		\big\|_{L(H,L^p(   \lambda   ;\R)   )}^{-2} 
		\\&\cdot\left( 	 \tfrac{T|a_q|\min\{T,1\}}{32\vartheta^2\max\{C,1\}\max\{T,1\}\zeta_{\chi} M^{(1+\chi)}}    \right)^{2M} \bigg]
		\geq\liminf_{M\to \infty} \bigg[\tfrac{r}{10}q^{(M-1)} 
		-\tfrac{3M^2\kappa^2}{T}
		\\&-\tfrac{3M^2(\eta+4\pi^2)^{2\nu}M^{4\nu}}{4T|\zeta_{\nu+s}|^2} \left[\tfrac{T}{M}\right]^{2(\nu+s)}\big\|(\eta-A)^{-s}
		\big\|_{L(H,L^p(   \lambda   ;\R)   )}^{-2}  \left( 	 \tfrac{T|a_q|\min\{T,1\}}{32\vartheta^2\max\{C,1\}\max\{T,1\}\zeta_{\chi} M^{(1+\chi)}}    \right)^{2M}\bigg]
		\\&=\liminf_{M\to \infty} \bigg[\tfrac{r}{10}q^{(M-1)} 
		-\tfrac{3M^2\kappa^2}{T}-
		\tfrac{3(\eta+4\pi^2)^{2\nu}T^{(2(\nu+s)-1)}}{4|\zeta_{\nu+s}|^2}
		\\&\cdot \big\|(\eta-A)^{-s}
		\big\|_{L(H,L^p(   \lambda   ;\R)   )}^{-2} M^{2(1+\nu-s)} \left( 	 \tfrac{T|a_q|\min\{T,1\}}{32\vartheta^2\max\{C,1\}\max\{T,1\}\zeta_{\chi} M^{(1+\chi)}}    \right)^{2M}
		\bigg]. \numberthis
		\end{align*}
		Next observe that for all 
		$x_1,x_2,x_3,\alpha\in(0,\infty)$
		it holds that 
		\begin{equation}
		\begin{split}
		\liminf_{M\to\infty}M^{x_1}\Big(\frac{\alpha}{M^{x_2}}\Big)^{x_3M}
		&=
		\liminf_{M\to\infty}\Big(\frac{\alpha M^{\nicefrac{x_1}{(x_3M)}}}{M^{x_2}}\Big)^{x_3M}
		\leq
		\liminf_{M\to\infty}\Big(\frac{\alpha M^{\nicefrac{x_2}{2}}}{M^{x_2}}\Big)^{x_3M}
		\\&=
		\liminf_{M\to\infty}\Big(\frac{\alpha}{M^{\nicefrac{x_2}{2}}}\Big)^{x_3M}
		\leq
		\liminf_{M\to\infty}\big(\tfrac{1}{2}\big)^{x_3M}
		=0.
		\end{split}
		\end{equation}
		Combining this with \eqref{eq:problem.101} establishes for all $r \in (0,\infty)$ that 
		\begin{equation}
		\label{DIV3}
		\begin{split}
		&\liminf_{M\to \infty} \left[\tfrac{r}{10}q^{(M-1)} 
		-\tfrac{3M^2}{T}\!\left(\kappa^2+\tfrac{|g_{M,r}|^2}{\gamma_{M}}\right)\right]
		=\infty.
		\end{split}
		\end{equation}
		Moreover, note that it holds for all  $M \in \N$ that 
		\begin{align*}
&\left( 	 \tfrac{T|a_q|\min\{T,1\}}{32\vartheta^2\max\{C,1\}\max\{T,1\}\zeta_{\chi} M^{(1+\chi)}}    \right)^{(1+M)} |y_M|^2 \numberthis
		\\&=\left( 	 \tfrac{T|a_q|\min\{T,1\}}{32\vartheta^2\max\{C,1\}\max\{T,1\}\zeta_{\chi} M^{(1+\chi)}}    \right)^{(1+M)}\tfrac{1}{|\zeta_{\nu+s}|^2} \left[\tfrac{T}{M}\right]^{2(\nu+s)}\big\|(\eta-A)^{-s}
		\big\|_{L(H,L^p(   \lambda   ;\R)   )}^{-2}.
		\end{align*}
		This implies for all $M \in \N$ that 
		\begin{equation}
		\begin{split}
		&\ln\!\left(\left( 	 \tfrac{T|a_q|\min\{T,1\}}{32\vartheta^2\max\{C,1\}\max\{T,1\}\zeta_{\chi} M^{(1+\chi)}}    \right)^{(1+M)} |y_M|^2\right) \\
		& = \ln\!\left(\left( 	 \tfrac{T|a_q|\min\{T,1\}}{32\vartheta^2\max\{C,1\}\max\{T,1\}\zeta_{\chi} M^{(1+\chi)}}    \right)^{(1+M)}\right)
		\\&\quad + \ln\!\left(  \tfrac{1}{|\zeta_{\nu+s}|^2} \left[\tfrac{T}{M}\right]^{2(\nu+s)}\big\|(\eta-A)^{-s}
		\big\|_{L(H,L^p(   \lambda   ;\R)   )}^{-2} \right)\Big]
		\\&=(1+M)\ln\!\left( \tfrac{T|a_q|\min\{T,1\}}{32\vartheta^2\max\{C,1\}\max\{T,1\}\zeta_{\chi} M^{(1+\chi)}} \right)
		\\&\quad +\ln\!\left(  \tfrac{T^{2(\nu+s)}}{|\zeta_{\nu+s}|^2} \big\|(\eta-A)^{-s}
		\big\|_{L(H,L^p(   \lambda   ;\R)   )}^{-2} \right)+\ln\!\left(M^{-2(\nu+s)}\right).
		\end{split}
		\end{equation}
		Hence, we obtain for all $M \in \N$ that 
		\begin{equation}
		\begin{split}
		&\ln\!\left(\left( 	 \tfrac{T|a_q|\min\{T,1\}}{32\vartheta^2\max\{C,1\}\max\{T,1\}\zeta_{\chi} M^{(1+\chi)}}    \right)^{(1+M)} |y_M|^2\right)\\
	&=(1+M)\ln\!\left( \tfrac{T|a_q|\min\{T,1\}}{32\vartheta^2\max\{C,1\}\max\{T,1\}\zeta_{\chi} } \right) +(1+M)\ln\!\left(M^{-(1+\chi)}\right)
		\\&\quad +\ln\!\left(  \tfrac{T^{2(\nu+s)}}{|\zeta_{\nu+s}|^2} \big\|(\eta-A)^{-s}
		\big\|_{L(H,L^p(   \lambda   ;\R)   )}^{-2} \right)  -2(\nu+s)\ln(M)
		\\&
		= (1+M)\ln\!\left( \tfrac{T|a_q|\min\{T,1\}}{32\vartheta^2\max\{C,1\}\max\{T,1\}\zeta_{\chi} } \right)-(1+\chi)(1+M)\ln(M)
		\\&\quad +\ln\!\left(  \tfrac{T^{2(\nu+s)}}{|\zeta_{\nu+s}|^2} \big\|(\eta-A)^{-s}
		\big\|_{L(H,L^p(   \lambda   ;\R)   )}^{-2} \right) -2(\nu+s)\ln(M).
		\end{split}
		\end{equation}
		This and the fact that $\forall \, x \in (0,\infty)\colon \ln(x)\leq x$ ensure for all $M \in \N$ that 
		\begin{align*}
		\label{log6}
		&\ln\!\left(\left( 	 \tfrac{T|a_q|\min\{T,1\}}{32\vartheta^2\max\{C,1\}\max\{T,1\}\zeta_{\chi} M^{(1+\chi)}}    \right)^{(1+M)} |y_M|^2\right)
		\geq (1+M)\ln\!\left( \tfrac{T|a_q|\min\{T,1\}}{32\vartheta^2\max\{C,1\}\max\{T,1\}\zeta_{\chi} } \right) 
		\\&\quad -(1+\chi)(1+M)M+\ln\!\left(  \tfrac{T^{2(\nu+s)}}{|\zeta_{\nu+s}|^2} \big\|(\eta-A)^{-s}
		\big\|_{L(H,L^p(   \lambda   ;\R)   )}^{-2} \right)-2(\nu+s)M. \numberthis
		\end{align*}
		In addition, note that \eqref{stima1sigma} implies for all  $r \in (0,\infty)$ that 
		\begin{align*}
		&	\liminf_{M\to \infty} \left[\tfrac{r}{10}q^{(M-1)} 
		+(2M^2+M) \ln\!\left(\tfrac{z_{M,r} y_M}{2\pi \gamma_{M} T}\right)\right]
		\\&=	\liminf_{M\to \infty} \left[\tfrac{r}{10}q^{(M-1)} 
		+(2M^2+M) \ln\!\left(z_{M,r} y_M\right)-(2M^2+M)\ln\!\left(2\pi \gamma_{M} T\right)\right]
		\\&\geq 	\liminf_{M\to \infty} \left[\tfrac{r}{10}q^{(M-1)} 
		+(2M^2+M) \ln\!\left(z_{M,r} y_M\right)-2(2M^2+M)\pi \gamma_{M} T\right]
		\\&\geq \liminf_{M\to \infty} \left[\tfrac{r}{10}q^{(M-1)} 
		+(2M^2+M) \ln\!\left(z_{M,r} y_M\right)-\tfrac{6M}{\eta^{2\nu}}(2M^2+M)\pi T \right]\\
		& = \liminf_{M\to \infty} \bigg[\tfrac{r}{10}q^{(M-1)} 
		+(2M^2+M)\ln\!\left(\left( 	 \tfrac{T|a_q|\min\{T,1\}}{32\vartheta^2\max\{C,1\}\max\{T,1\}\zeta_{\chi} M^{(1+\chi)}}    \right)^{(1+M)} |y_M|^2\right)\\
		& \quad -\tfrac{6M}{\eta^{2\nu}}(2M^2+M)\pi T \bigg]. \numberthis
		\end{align*}
		This and \eqref{log6}  assure for all $r \in (0,\infty)$ that 
		\begin{equation}
		\label{DIV4}
		\liminf_{M\to \infty} \left[\tfrac{r}{10}q^{(M-1)} 
		+(2M^2+M) \ln\!\left(\tfrac{z_{M,r} y_M}{2\pi \gamma_{M} T}\right)\right]=\infty.
		\end{equation}
		Furthermore, observe that the fact that $\forall \, M \in \N, r \in (0,\infty)\colon |\rho_{M,r}|^{-2(1+M)}\leq 1$ ensures for all $ M \in \N$, $r \in (0,\infty)$ that 
		\begin{equation}
		\begin{split}
		&|z_{M,r}|^2+|y_M|^2
		=|\rho_{M,r}|^{-2(1+M)}|y_M|^2+|y_M|^2
		\\&=|y_M|^2\left(|\rho_{M,r}|^{-2(1+M)}+1\right)
		\leq 2 |y_M|^2
		\\&=\tfrac{2}{|\zeta_{\nu+s}|^2} \left[\tfrac{T}{M}\right]^{2(\nu+s)}\big\|(\eta-A)^{-s}
		\big\|_{L(H,L^p(   \lambda   ;\R)   )}^{-2}.
		\end{split}
		\end{equation}
		This and \eqref{stima2sigma} assure for all $r \in (0,\infty)$ that 
		\begin{align*}
		\label{DIV5}
		&\liminf_{M\to \infty} \left[\tfrac{r}{10}q^{(M-1)} 
		-\tfrac{3M^3}{\gamma_{M}T}\!\left(|z_{M,r}|^2+|y_M|^2\right) \right]
		\\&\geq  \liminf_{M\to \infty} \left[\tfrac{r}{10}q^{(M-1)} 
		-\tfrac{3M^{(3+4\nu)}(\eta+4\pi^2)^{2\nu}}{T}\!\left[\tfrac{2}{|\zeta_{\nu+s}|^2} \left[\tfrac{T}{M}\right]^{2(\nu+s)}\big\|(\eta-A)^{-s}
		\big\|_{L(H,L^p(   \lambda   ;\R)   )}^{-2}\right]\right]
		\\&=\infty.  \numberthis
		\end{align*}
		Combining this with 		\eqref{DIV}, 	\eqref{DIV1}, 	\eqref{DIV2}, 	\eqref{DIV3}, and 	\eqref{DIV4} proves for all $r \in (0,\infty)$ that 
		\begin{equation}
		\liminf_{M\to \infty} \E\Big[\left|\left< e_0, Y_M^M\right>_H\right|^r\Big] =\infty.
		\end{equation}
		The fact that $\forall \, N\in \N \colon |\langle e_0, Y_N^N\rangle_H| \leq \|Y_N^N\|_H$ therefore establishes for all $r \in (0,\infty)$ that 
		\begin{equation}
		\begin{split}
		\liminf_{N\to \infty} \E\!\left[\big\|Y_N^N\big\|_H^r\right]
		\geq \liminf_{N\to \infty} \E\!\left[\left|\left< e_0, Y_N^N\right>_H\right|^r\right] 
		=\infty.
		\end{split}
		\end{equation} 
		The proof of Proposition \ref{poliG} is thus completed.
	\end{proof}

\begin{theorem}
	\label{risultatofinale}
	Let $\lambda\colon \mathcal{B}((0,1)) \to [0,\infty]$ be the Lebesgue-Borel measure on $(0,1)$, let $(H, \left\|\cdot\right\|_H, \left<\cdot,\cdot\right>_H)=
	(L^2(\lambda; \R) ,  \norm{\cdot}_{L^2(\lambda; \R)}, \left<\cdot,\cdot\right>_{L^2(\lambda; \R)})$, let $e_n \in H$, $n \in \Z$, satisfy for all $n \in \N$ that
	$e_0(\cdot)=1$, 
	$e_n(\cdot)=\sqrt{2}\cos(2n\pi (\cdot))$,
	and 
	$e_{-n}(\cdot)=\sqrt{2}\sin(2n\pi (\cdot))$,
	let $A\colon D(A) \subseteq H \to H$ be the linear operator which satisfies that
	\begin{equation}
	D(A)= \bigg\{ v \in H \colon \sum_{n\in \Z}  n^4 \left| \left< e_n, v \right>_H \right|^2 < \infty \bigg\}
	\end{equation}
	and 
	\begin{equation}
	\forall \, v \in D(A)\colon \quad Av= \sum_{n\in\Z} -4\pi^2 n^2  \left< e_n, v \right>_H e_n,
	\end{equation}
	let $T, \eta  \in (0,\infty)$, let $(H_r,\left\| \cdot \right\|_{H_r}, \left< \cdot, \cdot \right>_{H_r} )$, $r \in \R$, be a family of interpolation spaces  associated to $\eta - A$, let $P_N \in L(H_{-1},H_{1})$, $N \in \N$, be the  linear operators which satisfy for all $N \in \N$,  $v \in H$ that $P_N (v) = \sum_{n=-N}^{N}\left<e_n, v\right>_H  e_n$, 
	let $(\Omega,\mathcal{F},\mathbb{P})$ be a probability space, let $q \in \{2,3,\dots\}$, $a_0,a_1,\dots,a_{q-1} \in \R$, $a_q \in \R\backslash\{0\}$, $\chi \in (\nicefrac{1}{4}, \infty)$, $\nu \in (\nicefrac{1}{4},\nicefrac{3}{4})$, 
	$\xi \in H_\chi$,
	let $W \colon [0,T]\times \Omega \to H_{-\nu}$ be an $\operatorname{Id}_{H}$-cylindrical Wiener process, 
	let $S_N \in L(H_{-\nu})$, $N \in \N$, be   linear operators which satisfy for all $N \in \N$, $r \in [-\nu, \infty)$, $v,u \in H$ that $S_N(H_r)\subseteq  H_{r+1}$, 
	$	\sup_{M \in \N} \sup_{s \in [0,1]} \sup_{w \in H, \|w\|_H\leq 1}M^{-s}\|S_Mw\|_{H_s}<\infty$,
	$S_Ne_0= e_0$, $(\eta-A)^{-\nu}S_Nv=S_N(\eta-A)^{-\nu}v$, $\left<S_Nu,v \right>_H=\left<u,S_Nv \right>_H$, and  $P_NS_Nv=S_NP_Nv$, 
	and let $Y^N\colon \{0,1,\dots,N\}\times\Omega \to H$, $N \in \N$,  be  stochastic processes which satisfy for all   $N \in \N$, $n \in \{0,1,\dots,N-1\}$ that $Y_0^N=P_N(\xi)$ and 
	\begin{equation}
	Y_{n+1}^N=P_NS_N\Big(Y_n^N+\tfrac{T}{N}\!\left(\textstyle\sum_{k=0}^{q}a_k\big[Y_n^N\big]^k\right)+ \big(W_{\frac{(n+1)T}{N}}    - W_{\frac{nT}{N}} \big)\Big).
	\end{equation}
	Then it holds for all $p \in (0,\infty)$ that $\liminf_{N\to \infty} \E\!\left[\|Y_N^N\|_H^p \right]=\infty$.
\end{theorem}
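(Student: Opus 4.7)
The plan is to deduce Theorem~\ref{risultatofinale} as an immediate corollary of Proposition~\ref{poliG}, since the two statements are, apart from one quantitative difference, identical. First I would match up the standing objects: the Hilbert space $H$, the orthonormal basis $(e_n)_{n\in\Z}$, the Laplacian $A$ with periodic boundary conditions, the parameter $\eta \in (0,\infty)$, the scale $(H_r)_{r \in \R}$ of interpolation spaces, and the spectral projections $P_N$ appearing in the hypotheses of Theorem~\ref{risultatofinale} are precisely the objects fixed in Setting~\ref{concrete}. The hypotheses on the probability space, on the degree $q$ and coefficients $a_0,\dots,a_q$, on the exponent $\nu \in (\nicefrac{1}{4},\nicefrac{3}{4})$, on the cylindrical Wiener process $W$, on the smoothing operators $S_N$ (including the mapping property $S_N(H_r) \subseteq H_{r+1}$, the growth bound on $N^{-s}\|S_N\|_{L(H, H_s)}$, the self-adjointness $\langle S_N u, v\rangle_H = \langle u, S_N v\rangle_H$, the fixed point $S_N e_0 = e_0$, the commutation relations $(\eta-A)^{-\nu} S_N = S_N (\eta-A)^{-\nu}$ and $P_N S_N = S_N P_N$), and on the recursive definition of $Y^N$ all appear in both statements word for word.

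The only discrepancy between the two statements is the admissible range of the initial-value regularity exponent: Proposition~\ref{poliG} requires $\chi \in (\nicefrac{1}{4}, 1]$, whereas Theorem~\ref{risultatofinale} admits any $\chi \in (\nicefrac{1}{4}, \infty)$. The reduction therefore hinges on a single observation. Let $\tilde{\chi} = \min\{\chi, 1\}$, so that $\tilde{\chi} \in (\nicefrac{1}{4}, 1]$. Since $(H_r)_{r \in \R}$ is a family of interpolation spaces associated to $\eta - A$, it is monotone with respect to inclusion in the sense that $H_r \subseteq H_{r'}$ whenever $r \geq r'$; in particular, $\xi \in H_\chi \subseteq H_{\tilde{\chi}}$.

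All remaining hypotheses of Proposition~\ref{poliG} are already granted by the hypotheses of Theorem~\ref{risultatofinale}. Applying Proposition~\ref{poliG} with $\chi$ replaced by $\tilde{\chi}$ (and keeping every other datum unchanged) therefore yields $\liminf_{N\to \infty} \E[\|Y_N^N\|_H^p] = \infty$ for every $p \in (0,\infty)$, which is exactly the conclusion of Theorem~\ref{risultatofinale}. Because the argument is a one-line reduction, there is no genuine obstacle: all the hard analytic work (the reverse Gronwall-type inequality, the rare-event lower bounds via Gaussian anti-concentration, the polynomial-algebra manipulations in the proof of Proposition~\ref{poliG}, and the combinatorial estimate that forces the exponential-in-$q^{N}$ divergence) is already packaged inside Proposition~\ref{poliG} and the supporting results of Section~\ref{sec2}.
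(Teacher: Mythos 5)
Your proposal is correct and coincides with the paper's own proof: the paper also establishes Theorem~\ref{risultatofinale} by applying Proposition~\ref{poliG} with $\chi$ replaced by $\min\{\chi,1\}$, using implicitly the embedding $H_\chi \subseteq H_{\min\{\chi,1\}}$ that you spell out explicitly.
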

\begin{proof}[Proof of Theorem \ref{risultatofinale}]
	Note that   Proposition \ref{poliG} (with $\lambda=\lambda$, $(H, \left\|\cdot\right\|_H, \left<\cdot,\cdot\right>_H)$ = $(H, \left\|\cdot\right\|_H, \left<\cdot,\cdot\right>_H)$, $e_n=e_n$ , $A=A$, $T=T$, $\eta=\eta$, $(H_r, \left\| \cdot \right\|_{H_r}, \left< \cdot, \cdot \right>_{H_r} )=(H_r, \left\| \cdot \right\|_{H_r}, \left< \cdot, \cdot \right>_{H_r} )$, $P_N=P_N$,  $(\Omega,\mathcal{F},\mathbb{P})=(\Omega,\mathcal{F},\mathbb{P})$, $q=q$, $a_0=a_0, a_1=a_1$, \ldots, $a_{q-1}=a_{q-1}, a_{q}=a_q$, $\chi= \min\{\chi, 1\}$, $\nu=\nu$, $\xi=\xi$, $W=W$, $S_N=S_N$, $Y^N=Y^N$ 
	for $n \in \Z$,  $r \in \R$,  $N \in \N$ 
	in the notation of Proposition \ref{poliG}) establishes Theorem \ref{risultatofinale}. The proof of Theorem \ref{risultatofinale} is thus completed.
\end{proof}

	\subsection[Divergence results for specific  Euler-type approximation schemes]{Divergence results for specific  Euler-type approximation schemes for SPDEs with superlinearly growing nonlinearities}
	\label{main}

The next result, Corollary~\ref{cor:last} below, follows  from  Theorem \ref{risultatofinale}.

	\begin{cor}
		\label{cor:last}
	Let $\lambda\colon \mathcal{B}((0,1)) \to [0,\infty]$ be the Lebesgue-Borel measure on $(0,1)$, let $(H, \left\|\cdot\right\|_H, \left<\cdot,\cdot\right>_H)=
	(L^2(\lambda; \R) ,  \norm{\cdot}_{L^2(\lambda; \R)}, \left<\cdot,\cdot\right>_{L^2(\lambda; \R)})$, let $e_n \in H$, $n \in \Z$, satisfy for all $n \in \N$ that
	$e_0(\cdot)=1$, 
	$e_n(\cdot)=\sqrt{2}\cos(2n\pi (\cdot))$,
	and 
	$e_{-n}(\cdot)=\sqrt{2}\sin(2n\pi (\cdot))$,
	let $A\colon D(A) \subseteq H \to H$ be the linear operator which satisfies that
	\begin{equation}
	D(A)= \bigg\{ v \in H \colon \sum_{n\in \Z}  n^4 \left| \left< e_n, v \right>_H \right|^2 < \infty \bigg\}
	\end{equation}
	and 
	\begin{equation}
	\forall \, v \in D(A)\colon \quad Av= \sum_{n\in\Z} -4\pi^2 n^2  \left< e_n, v \right>_H e_n,
	\end{equation}
	let $T, \eta  \in (0,\infty)$, let $(H_r, \left\| \cdot \right\|_{H_r}, \left< \cdot, \cdot \right>_{H_r} )$, $r \in \R$, be a family of interpolation spaces  associated to $\eta - A$, let $P_N \colon H \to H$, $N \in \N$, be the  linear operators which satisfy for all $N \in \N$,  $v \in H$ that $P_N (v) = \sum_{n=-N}^{N}\left<e_n, v\right>_H e_n$, 
	let $(\Omega,\mathcal{F},\mathbb{P})$ be a probability space, let  $q \in \{2,3,\dots\}$, $a_0,a_1,\dots,a_{q-1} \in \R$, $a_q \in \R\backslash\{0\}$, $\chi \in (\nicefrac{1}{4}, \infty)$, $\nu \in (\nicefrac{1}{4},\nicefrac{3}{4})$, 
	$\xi \in H_\chi$,
	let $W \colon [0,T]\times \Omega \to H_{-\nu}$ be an $\operatorname{Id}_{H}$-cylindrical Wiener process, 
	let $S_N \colon H_{-\nu} \to H$, $N \in \N$, be   linear operators which satisfy for all $N \in \N$ that
	$S_N\in \{e^{\nicefrac{T}{N}A}, (I-\nicefrac{T}{N}A)^{-1}      \}$,
	and let $Y^N\colon \{0,1,\dots,N\}\times\Omega \to H$, $N \in \N$,  be the stochastic processes which satisfy for all   $N \in \N$, $n \in \{0,1,\dots,N-1\}$ that $Y_0^N=P_N(\xi)$ and 
	\begin{equation}
	Y_{n+1}^N=P_NS_N\Big(Y_n^N+\tfrac{T}{N}\!\left(\textstyle\sum_{k=0}^{q}a_k\big[Y_n^N\big]^k\right)+ \big(W_{\frac{(n+1)T}{N}}    - W_{\frac{nT}{N}} \big)\Big).
	\end{equation}
	Then it holds for all $p \in (0,\infty)$ that $\liminf_{N\to \infty} \E\!\left[\|Y_N^N\|_H^p \right]=\infty$.
\end{cor}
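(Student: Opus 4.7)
The plan is to verify that Corollary~\ref{cor:last} is an immediate consequence of Theorem~\ref{risultatofinale}. Since $A$ is self-adjoint, negative semi-definite, and diagonal with respect to the orthonormal basis $(e_n)_{n\in\Z}$ with eigenvalues $(-4\pi^2 n^2)_{n\in\Z}$, every bounded Borel function $\varphi$ of $A$ acts diagonally: for all $N\in\N$ and $v\in H$ one has $\varphi(A)v=\sum_{n\in\Z}\varphi(-4\pi^2 n^2)\langle e_n,v\rangle_H e_n$. In particular, for $S_N\in\{e^{\nicefrac{T}{N}A},(I-\nicefrac{T}{N}A)^{-1}\}$ the eigenvalue at $e_n$ is either $\exp(-\nicefrac{T}{N}\cdot 4\pi^2 n^2)$ or $(1+\nicefrac{T}{N}\cdot 4\pi^2 n^2)^{-1}$, both of which lie in $(0,1]$ and equal $1$ when $n=0$.

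First, I would establish the six hypotheses on $(S_N)_{N\in\N}$ required by Theorem~\ref{risultatofinale}. The identity $S_N e_0=e_0$ is immediate from the diagonal representation and the fact that $0$ is an eigenvalue of $A$ at $e_0$. Self-adjointness of $S_N$ on $H$, i.e.\ $\langle S_N u,v\rangle_H=\langle u,S_N v\rangle_H$, follows from functional calculus of the self-adjoint operator $A$, and extends to the relevant interpolation spaces. The commutations $(\eta-A)^{-\nu}S_N v=S_N(\eta-A)^{-\nu}v$ and $P_N S_N v=S_N P_N v$ are functional calculus identities: $(\eta-A)^{-\nu}$, $P_N$, and $S_N$ are all Borel functions of $A$ and therefore pairwise commute on their natural domains. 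The mapping property $S_N(H_r)\subseteq H_{r+1}$ and the smoothing bound $\sup_{M\in\N}\sup_{s\in[0,1]}\sup_{\|w\|_H\leq 1}(M^{-s}\|S_M w\|_{H_s})<\infty$ reduce, via $\|v\|_{H_s}=\|(\eta-A)^s v\|_H$ and diagonalisation, to the elementary numerical estimates $\sup_{x\geq 0}(1+x)^s e^{-tx}\leq C t^{-s}$ and $\sup_{x\geq 0}(1+x)^s(1+tx)^{-1}\leq C t^{-s}$ for $s\in[0,1]$, $t\in(0,1]$, with $t=T/M$ absorbed into the constant.

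Having verified the hypotheses, I would invoke Theorem~\ref{risultatofinale} with the same data $(\lambda,H,(e_n),A,T,\eta,(H_r),(P_N),(\Omega,\mathcal{F},\P),q,(a_k),\chi,\nu,\xi,W,(S_N),(Y^N))$ to conclude, for every $p\in(0,\infty)$, that $\liminf_{N\to\infty}\E[\|Y_N^N\|_H^p]=\infty$.

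The only nontrivial ingredient is the uniform smoothing estimate \eqref{eq:smoothing.constant}; the rest is bookkeeping. I anticipate no real obstacle since both the heat semigroup $e^{tA}$ and the resolvent $(I-tA)^{-1}$ of the Laplacian with periodic boundary conditions are textbook examples of analytic-semigroup-type operators with the stated regularising behaviour, and all remaining properties follow by functional calculus.
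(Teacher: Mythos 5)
Your proposal is correct and follows essentially the same route as the paper: one checks that both choices $S_N\in\{e^{\nicefrac{T}{N}A},(I-\nicefrac{T}{N}A)^{-1}\}$ satisfy the hypotheses of Theorem~\ref{risultatofinale} (the mapping property $S_N(H_r)\subseteq H_{r+1}$, the uniform bound $\sup_{M\in\N}\sup_{s\in[0,1]}\sup_{\|w\|_H\leq 1}M^{-s}\|S_Mw\|_{H_s}<\infty$, $S_Ne_0=e_0$, self-adjointness, and the commutations with $(\eta-A)^{-\nu}$ and $P_N$) and then applies that theorem with the same data. The only cosmetic difference is that for the exponential Euler case the paper quotes the analytic-semigroup estimate $\|(t(\eta-A))^{s}e^{tA}\|_{L(H)}\leq e^{t\eta}$ from Renardy--Rogers, whereas you obtain the same bound (as the paper itself does for the resolvent case) by direct diagonalisation over the eigenbasis $(e_n)_{n\in\Z}$, reducing to the scalar estimates you state, which are indeed valid.
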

\begin{proof}[Proof of Corollary~\ref{cor:last}]
	Throughout this proof let $\tilde{P}_N \in L(H_{-1},H_{1})$, $N \in \N$, be the  linear operators which satisfy for all $N \in \N$,  $v \in H$ that $\tilde{P}_N (v) = P_N(v)$ and let $\tilde{S}_N \in L(H_{-\nu})$, $N \in \N$, be the linear operators which satisfy for all $N \in \N$, $v \in H_{-\nu}$ that $\tilde{S}_N v = S_N v$.  
	Note that for all $N \in \N$, $r \in [-\nu, \infty)$, $v \in H_r$ it holds that
	\begin{equation}
	e^{\nicefrac{T}{N}A} v \in H_{r+1} \qquad \text{and} \qquad  (I-\nicefrac{T}{N}A)^{-1} v \in H_{r+1}.
	\end{equation}
This proves that for all $N \in \N$, $r \in [-\nu, \infty)$ it holds that
\begin{equation}
\label{eq:image_S}
\tilde{S}_N(H_r) = S_N(H_r) \subseteq H_{r+1}.
\end{equation}
Next observe that the fact that $ \forall \, r\in [0,1], t \in (0,\infty) \colon \|(t (\eta-A))^r \, e^{t A}\|_{L(H)}\leq e^{t  \eta}$ (cf., e.g., Renardy \& Rogers~\cite[Lemma~11.36]{RenardyRogers1993}) ensures that for all $M \in \N$, $s \in [0, 1]$ it holds that
\begin{align*}
\label{eq:exp:bounded}
\sup_{w \in H, \|w\|_H\leq 1} \big(M^{-s}\|e^{\nicefrac{T}{M}A} w\|_{H_s} \big) & = \sup_{w \in H, \|w\|_H\leq 1} \big(M^{-s}\| (\eta -A)^s \, e^{\nicefrac{T}{M}A} w\|_{H} \big)\\
& = T^{-s} \sup_{w \in H, \|w\|_H\leq 1} \big(\big\| (\nicefrac{T}{M} (\eta -A))^s \, e^{\nicefrac{T}{M}A} w\big\|_{H} \big)\\
& \leq T^{-s}  \big\| (\nicefrac{T}{M} (\eta -A))^s \, e^{\nicefrac{T}{M}A} \big\|_{L(H)}\\
& \leq T^{-s} \, e^{\nicefrac{T \eta}{M} } \leq \max\{1, T^{-1}\} \, e^{T \eta}. \numberthis
\end{align*}
Moreover, note that  for all 
$M \in \N$, $s \in [0, 1]$ it holds that
\begin{equation}
\begin{split}
& \left[\sup_{w \in H, \|w\|_H\leq 1} \big(M^{-s}\|(I-\nicefrac{T}{M}A)^{-1} w\|_{H_s} \big) \right]^2\\
 &= \sup_{w \in H, \|w\|_H\leq 1} \big(M^{-2s}\|(\eta -A)^s (I-\nicefrac{T}{M}A)^{-1} w\|_{H}^2 \big)\\
 & = \sup_{w \in H, \|w\|_H\leq 1} \left( \sum_{n \in \Z}  \frac{ M^{-2s}(\eta+4\pi^2 n^2)^{2s}}{(1+\nicefrac{4\pi^2n^2 T}{M})^2} \langle w, e_n \rangle_H^2  \right) \\
 & \leq \sup_{w \in H, \|w\|_H\leq 1} \left( \left[\sup_{n \in \Z}  \frac{ M^{-2s}(\eta+4\pi^2 n^2)^{2s}}{(1+\nicefrac{4\pi^2n^2 T}{M})^2} \right] \left[ \sum_{m \in \Z} \langle w, e_m \rangle_H^2 \right] \right) \\
 & \leq \sup_{n \in \Z} \left( \frac{ M^{-2s}(\eta+4\pi^2 n^2)^{2s}}{(1+\nicefrac{4\pi^2n^2 T}{M})^2} \right) = \sup_{n \in \Z} \left( \frac{(\nicefrac{\eta}{M}+\nicefrac{4\pi^2 n^2}{M})^{2s}}{(1+\nicefrac{4\pi^2n^2 T}{M})^2} \right).
\end{split}
\end{equation}
This demonstrates that for all 
$M \in \N$, $s \in [0, 1]$ it holds that
\begin{equation}
\begin{split}
& \sup_{w \in H, \|w\|_H\leq 1} \big(M^{-s}\|(I-\nicefrac{T}{M}A)^{-1} w\|_{H_s} \big) \leq  \sup_{n \in \Z} \left( \frac{(\nicefrac{\eta}{M}+\nicefrac{4\pi^2 n^2}{M})^{s}}{(1+\nicefrac{4\pi^2n^2 T}{M})} \right) \\
& \leq \sup_{n \in \Z} \left( \frac{(\nicefrac{\eta}{\min\{T,1\}}+\nicefrac{4\pi^2 n^2 T}{ [\min\{T,1\} M]})^{s}}{(1+\nicefrac{4\pi^2n^2 T}{M})} \right) \\
&\leq \left[\frac{\max\{\eta, 1\}}{\min\{T,1\}}\right]^s \left[ \sup_{n \in \Z} \left( \frac{(1+\nicefrac{4\pi^2n^2  T}{M})^{s}}{(1+\nicefrac{4\pi^2n^2 T}{M})} \right) \right]\leq \left[\frac{\max\{\eta, 1\}}{\min\{T,1\}}\right] < \infty.
\end{split}
\end{equation}
Combining this with \eqref{eq:exp:bounded} assures that
\begin{equation}
\sup_{M \in \N} \sup_{s \in [0,1]} \sup_{w \in H, \|w\|_H\leq 1} \big( M^{-s}\|\tilde{S}_Mw\|_{H_s} \big)<\infty.
\end{equation}
The fact that $ \forall \, N \in \N, u, v \in H \colon \big( \tilde{S}_Ne_0= e_0$, $(\eta-A)^{-\nu}\tilde{S}_Nv=\tilde{S}_N(\eta-A)^{-\nu}v$, $\langle\tilde{S}_Nu,v \rangle_H=\langle u,\tilde{S}_Nv \rangle_H$, and  $\tilde{P}_N \tilde{S}_Nv= \tilde{S}_N \tilde{P}_Nv \big)$, \eqref{eq:image_S}, 
and Theorem \ref{risultatofinale} (with $\lambda=\lambda$, $(H, \left\|\cdot\right\|_H, \left<\cdot,\cdot\right>_H)$ = $(H, \left\|\cdot\right\|_H, \left<\cdot,\cdot\right>_H)$, $e_n=e_n$ , $A=A$, $T=T$, $\eta=\eta$, $(H_r, \left\| \cdot \right\|_{H_r}, \left< \cdot, \cdot \right>_{H_r} )=(H_r, \left\| \cdot \right\|_{H_r}, \left< \cdot, \cdot \right>_{H_r} )$, $P_N= \tilde{P}_N$,  $(\Omega,\mathcal{F},\mathbb{P})=(\Omega,\mathcal{F},\mathbb{P})$, $q=q$, $a_0=a_0, a_1=a_1$, \ldots, $a_{q-1}=a_{q-1}, a_{q}=a_q$, $\chi=\chi$, $\nu=\nu$, $\xi=\xi$, $W=W$, $S_N=\tilde{S}_N$, $Y^N=Y^N$ 
	for $n \in \Z$,  $r \in \R$,  $N \in \N$ 
	in the notation of Theorem \ref{risultatofinale}) hence  establish Corollary~\ref{cor:last}. The proof of Corollary~\ref{cor:last} is thus completed.
\end{proof}

	\bibliographystyle{acm}
	\bibliography{../bibfile}
\end{document}